\documentclass[a4paper]{amsart}

\usepackage{amsmath,amssymb,amsthm,mathtools}
\usepackage{tikz}
\usepackage{float}
\usepackage{setspace}
\allowdisplaybreaks
\theoremstyle{plain}
\newtheorem{theorem}{Theorem}[section]
\newtheorem{proposition}[theorem]{Proposition}
\newtheorem{lemma}[theorem]{Lemma}
\newtheorem{corollary}[theorem]{Corollary}
\theoremstyle{definition}
\newtheorem{definition}[theorem]{Definition}
\newtheorem{example}[theorem]{Example}
\newtheorem{introproblem}{Problem}
\newtheorem{prob}[theorem]{Problem}

\theoremstyle{remark}
\newtheorem{remark}[theorem]{Remark}

\numberwithin{equation}{section}

\newcommand{\C}{\mathbb{C}}
\newcommand{\R}{\mathbb{R}}
\newcommand{\End}{\operatorname{End}}
\newcommand{\id}{\operatorname{id}}
\newcommand{\esize}{\operatorname{esize}}
\newcommand{\DelsCode}{\operatorname{Dels}_{\mathrm{code}}}
\newcommand{\DelsDes}{\operatorname{Dels}_{\mathrm{des}}}

\title{Quantizing Delsarte Theory}
\subjclass[2020]{Primary 05E30; Secondary 05B05, 05B15, 46L05, 81P45, 90C25, 94B05}
\keywords{quantum association scheme, noncommutative association scheme, Delsarte theory, MacWilliams identity, conic programming, coherent configuration, quantum design}

\author[R.~Misawa]{Ryutaro Misawa}
\address[R.~Misawa]{Division of Mathematics\\
Graduate School of Information Sciences\\
Tohoku University\\
6-3-09 Aramaki-Aza-Aoba, Aoba-ku, Sendai 980-8579\\
Japan}
\email{misawa.ryutaro.q2@dc.tohoku.ac.jp}

\begin{document}

\begin{abstract}
We develop Delsarte theory for quantum association schemes, allowing both composition and the quantum Schur product to be noncommutative.
A MacWilliams identity and positivity of the inner and dual distributions yield conic upper bounds for codes and lower bounds for designs in terms of effective size, which extends subset cardinality.
These bounds recover classical linear programs for symmetric association schemes and semidefinite programs for homogeneous coherent configurations.
We also quantize Schurian schemes using finite-group representations and characterize quantum block designs in quantum Johnson schemes arising from irreducible stabilizer representations by averaging over subgroups fixing prescribed points individually.
In a quantum Johnson scheme on \(M_2(\mathbb C)^{\oplus10}\), we prove that the minimum effective size of a nonzero positive element satisfying the strength-two design condition is \(20\), attained by complementary projections outside the center of the underlying algebra.
Every such element in the center has effective size \(40\), while the conic lower bound is \(40/3\), strictly below the actual minimum.
\end{abstract}

\maketitle

\section{Introduction}\label{sec:introduction}

Delsarte theory describes codes and designs in association schemes through distributions, derives upper bounds for code size and lower bounds for design size, and studies objects attaining them~\cite{DelsarteLevenshtein1998,MartinTanaka2009}. In 1973, Delsarte formulated their conditions as the vanishing of prescribed distribution components and obtained linear programming bounds from nonnegativity~\cite{Delsarte1973}. We extend this central framework of algebraic combinatorics to finite quantum spaces.

Codes and designs arose independently from error correction and prescribed uniformity. Classical design theory developed through combinatorial constructions and experimental planning: Fisher studied treatment comparisons accounting for experimental conditions~\cite{Fisher1926}, Bose studied equal pairwise block incidences~\cite[Introduction]{Bose1939}, and orthogonal arrays reduced the number of factor-level combinations needed in experiments~\cite{PlackettBurman1946,Rao1947}. Shannon established the foundations of communication over noisy channels in 1948~\cite{Shannon1948}, and Hamming studied error-detecting and error-correcting codes in 1950~\cite{Hamming1950}.

In a block design, every point subset of a prescribed size lies in the same number of blocks of a fixed size; when this number is one, the design is a Steiner system~\cite{Stinson2004}. Subspace designs are finite-field \(q\)-analogs, replacing subsets and their sizes by subspaces and their dimensions~\cite{BraunKiermaierWassermann2018}. An orthogonal array of strength \(t\) requires each symbol combination to occur equally often in any \(t\) columns~\cite{HedayatSloaneStufken1999}; mutually orthogonal Latin squares give examples of strength two~\cite{Stinson2004}.

Bose and Shimamoto studied association schemes in 1952, and Bose and Mesner studied their adjacency algebras in 1959~\cite{BoseShimamoto1952,BoseMesner1959}. A scheme partitions the ordered pairs of a finite set into relations, including the diagonal, whose adjacency algebra is closed under matrix multiplication, transpose, and the entrywise Schur product. It is commutative when matrix multiplication is commutative and symmetric when every relation is invariant under reversing pairs; symmetry implies commutativity~\cite{MartinTanaka2009}. Delsarte theory places codes and orthogonal arrays in Hamming schemes, block designs in Johnson schemes, and subspace designs in Grassmann schemes, whose relations are determined by Hamming distance, intersection size, and intersection dimension, respectively~\cite{Delsarte1973,MartinTanaka2009,BraunKiermaierWassermann2018}.

The inner distribution records normalized pair counts, while in a symmetric scheme the dual distribution records normalized squared norms of eigenspace projections. Code and design conditions require prescribed inner and dual components, respectively, to vanish. Eigenmatrices transform these distributions; in the Hamming scheme, this includes the MacWilliams identity relating the weight distributions of a linear code and its dual~\cite{MacWilliams1963,Delsarte1973}.

Higman's coherent configurations allow the diagonal to split into several relations and the adjacency algebra to be noncommutative~\cite[Section~3]{Higman1970}; those with a single diagonal relation are homogeneous. Hobart derived semidefinite bounds on subset size~\cite{Hobart2009}, and Bamberg and Lansdown used designs in homogeneous coherent configurations to study transitive permutation groups~\cite{BambergLansdown2026}. These extensions still concern finite sets and have commutative Schur products.

Quantum information has independently motivated quantum codes and designs. Shor constructed a quantum error-correcting code in 1995, and Knill and Laflamme characterized exact error correction in 1997~\cite{Shor1995,KnillLaflamme1997}. Shor and Laflamme introduced quantum weight enumerators and a MacWilliams identity, which Rains used to bound code dimension~\cite{ShorLaflamme1997,Rains1998,Rains1999}.

Zauner defined quantum designs as finite families of orthogonal projections in finite-dimensional matrix algebras in his 1999 thesis, recovering classical structures from commuting projections~\cite[Definition~1.1 and Theorem~1.10]{Zauner1999}.
For a fixed rank strictly between zero and the matrix size, allowing arbitrary projections enlarges the finite set of diagonal projections to a continuous Grassmann manifold~\cite[Section~1.4]{Zauner1999}.
Goedicke and Vicary unified classical and quantum block designs using completely positive maps between finite-dimensional algebras~\cite[Sections~5.2--5.3]{GoedickeVicary2023}.
These definitions do not include a fixed ambient association scheme and its distributions.
Our block construction retains the finite classical block index, quantizes its Johnson scheme, and defines designs as positive elements in the resulting finite quantum space (Sections~\ref{sec:schurian}--\ref{sec:quantum-block-designs}).

Goyeneche and {\.Z}yczkowski related orthogonal arrays to quantum states with maximally mixed reductions~\cite{GoyenecheZyczkowski2014}, and Goyeneche et al.\ introduced quantum orthogonal arrays by replacing rows with states and imposing partial-trace conditions~\cite{GoyenecheRaissiDiMartinoZyczkowski2018}. Quantum Latin squares similarly replace symbols with vectors forming orthonormal bases in each row and column~\cite{MustoVicary2016}. We adopt the quantum orthogonal array definition of Du et al.~\cite{DuYinPangWang2020} and Zang et al.~\cite[Definition~4.1]{ZangFacchiTian2021}, which includes cross terms between rows: for standard basis rows, it is equivalent to a classical orthogonal array of strength \(t\) whose rows remain distinct after deleting any \(t\) columns.

Gromada introduced quantum association schemes (QAS) within quantum graph theory~\cite{Gromada2024}. They replace function algebras of finite sets with finite-dimensional \(C^\ast\)-algebras \(A\), equipped here with the trace \(h_A\) restricting to \(d\operatorname{Tr}\) on each summand \(M_d(\C)\). When \(A\) is commutative, it is the function algebra of a finite set. A QAS is an operator algebra on \(A\) closed under composition and the quantum Schur product extending entrywise multiplication (Definition~\ref{def:qas}); both products can be noncommutative. It is commutative or cocommutative when composition or the quantum Schur product, respectively, is commutative, and is a CCQAS when both are~\cite[Definition~2.1 and Section~2.3]{Gromada2024}. Classical schemes (AS) give cocommutative QAS; commutative classical schemes (Com AS), including symmetric schemes (Sym AS), give CCQAS (Figure~\ref{fig:qas-inclusions}).

\begin{figure}[htbp]
\centering
\begin{tikzpicture}[
  x=1cm,y=1cm,
  font=\small,
  every node/.style={align=center},
  line width=.7pt
]
  \fill[black!4] (4.7,.4) rectangle (7.5,2.55);
  \draw (0,0) rectangle (12.6,3.2);
  \node[anchor=west] at (.2,3.0) {QAS};
  \draw (.3,.25) rectangle (7.5,2.7);
  \draw (4.7,.4) rectangle (12.3,2.55);
  \node at (2.5,2.3) {Com QAS};
  \node[font=\footnotesize] at (2.5,1.9)
    {Commutative composition};
  \node at (9.9,2.3) {Cocom QAS};
  \node[font=\footnotesize] at (9.9,1.9)
    {Commutative Schur product};
  \node at (6.1,2.3) {CCQAS};
  \draw[dash pattern=on 4pt off 3pt]
    (5.0,.55) rectangle (12.0,1.4);
  \node[fill=white,inner sep=2pt] at (11.6,1.4) {AS};
  \node at (6.2,1.2) {Com AS};
  \node at (9.8,.95) {Non-Com AS};
  \draw (5.3,.65) rectangle (7.1,1.0);
  \node[font=\footnotesize] at (6.2,.825) {Sym AS};
\end{tikzpicture}
\caption{Inclusions among classes of QAS.}
\label{fig:qas-inclusions}
\end{figure}

Okada obtained Delsarte-type linear programming bounds for quantum metric spaces when the isometry-group representation on the operator space is multiplicity-free~\cite[Chapter~3]{Okada2025}. Kubischta and Teixeira allowed multiplicities and derived matrix-valued MacWilliams identities and semidefinite bounds~\cite[Section~III]{KubischtaTeixeira2026}. Although they also define enumerators without a group action, their canonical transform and semidefinite program use one~\cite[Sections~II--III]{KubischtaTeixeira2026}. Our theory starts from an arbitrary QAS, requires no additional group action or metric, and uses both products to bound codes from above and designs from below.

We formulate five guiding problems.

Classical constructions include translation schemes, invariant under group translations, and Schurian schemes, defined by ordered-pair orbits of transitive group actions~\cite[Section~3]{Higman1970}.

\begin{introproblem}\label{prob:intro-quantization}
Quantize classical association schemes on noncommutative finite quantum spaces, recovering them as classical special cases. Construct QAS admitting no isomorphism to a classical adjacency algebra that preserves both products.
\end{introproblem}

Gromada quantized abelian translation schemes~\cite[Section~4.3]{Gromada2024}; we quantize Schurian schemes. Further examples requiring other constructions include the distance schemes of twisted Grassmann graphs of diameter at least two, which are neither translation nor Schurian because the graphs are not vertex-transitive~\cite{VanDamKoolen2005}.

\begin{introproblem}\label{prob:intro-design-quantization}
Quantize combinatorial designs, including block designs and orthogonal arrays, with an explicit classical correspondence. Determine existence and nonexistence conditions and give explicit constructions.
\end{introproblem}

Existing quantum designs recover classical objects in different ways. A common framework should express their uniformity on finite quantum spaces and include further examples such as subspace designs.

\begin{introproblem}\label{prob:intro-theory}
Develop Delsarte theory for general QAS. Determine which classical results extend and how quantum codes and designs correspond to QAS code and design conditions.
\end{introproblem}

This is related to Problem~(vii) in Zauner's epilogue, which proposes quantum designs with association schemes as a generalization of partially balanced incomplete block designs~\cite[p.~69]{Zauner1999}. The aim is to preserve the classical code--design correspondence under quantization.

A subset's characteristic function is a projection in its function algebra. We impose code and design conditions more generally on nonzero positive elements \(\chi\in A\), with effective size
\[
\operatorname{esize}(\chi):=\frac{h_A(\chi)^2}{h_A(\chi^2)}.
\]
For a nonzero projection \(p\), this equals \(h_A(p)\), and for a subset's characteristic function it equals cardinality. Central projections select whole matrix summands; a projection outside the center of \(A\) selects a proper nonzero subspace in at least one summand.

\begin{introproblem}\label{prob:intro-construction}
For a fixed QAS and prescribed code or design conditions, determine and attain the maximum effective size of codes and the minimum effective size of designs. When do projections outside the center of \(A\) improve on central projections?
\end{introproblem}

Feasible distributions need not come from positive elements, so attaining bounds also requires studying realizability.

\begin{introproblem}\label{prob:intro-bounds}
Classify codes and designs attaining the bounds. Determine when distributions come from positive elements and when codes or designs give rise to a QAS.
\end{introproblem}

\subsection{Contributions}

These problems remain open in general; our contributions are as follows.
\begin{enumerate}
\item We prove a MacWilliams identity and conic upper bounds for codes and lower bounds for designs in general QAS. These bounds recover linear programs for symmetric schemes and semidefinite programs for homogeneous coherent configurations (Theorems~\ref{thm:macwilliams} and~\ref{thm:conic-bounds}, Proposition~\ref{prop:classical-noncommutative-specialization}, and Corollary~\ref{cor:classical-programs}).

\item We quantize Schurian schemes using finite-group representations. For irreducible \(\tau\), averaging over subgroups fixing prescribed points individually characterizes quantum block \(\tau\)-designs as designs in the quantum Johnson scheme, with trivial \(\tau\) recovering weighted classical block designs (Theorems~\ref{thm:quantum-schurian-construction} and~\ref{thm:johnson-pointwise-characterization}).

\item We characterize diameter-two Moore graphs~\cite{HoffmanSingleton1960} by code and design conditions, and pure quantum codes with a common prime-power subsystem dimension by QAS code conditions. Under the same dimension assumption, quantum orthogonal arrays are characterized by a normalized row sum and a rank-one design condition in the Weyl realization of the Hamming scheme (Propositions~\ref{prop:moore-correspondence} and~\ref{prop:weyl-code-distributions}, and Theorem~\ref{thm:qoa-qas-design}).

\item In a quantum Johnson scheme on ten copies of \(M_2(\C)\), the minimum effective size of a nonzero positive element satisfying the strength-two design condition is \(20\), attained by complementary projections outside the center of \(A\). Every such element in the center has effective size \(40\) (Theorem~\ref{thm:johnson-minimum-size}).

\item We compute exact conic bounds for a family of QAS on matrix algebras with both products noncommutative. In the Johnson example, replacing only inner-distribution positive semidefiniteness with trace nonnegativity gives a lower bound strictly below the full conic bound, which is strictly below the minimum effective size: \(10<40/3<20\) (Proposition~\ref{prop:matrix-difference-bounds} and Theorem~\ref{thm:johnson-conic-values}).
\end{enumerate}

Sections~\ref{sec:classical-delsarte}--\ref{sec:delsarte-theory} develop the theory, Sections~\ref{sec:moore}--\ref{sec:quantum-examples} treat examples, Sections~\ref{sec:schurian}--\ref{sec:quantum-block-designs} quantize Schurian schemes and block designs, and Section~\ref{sec:optimal-block-design} determines the optimal designs and conic bounds in the Johnson example.
\section{Classical Delsarte theory}\label{sec:classical-delsarte}

The inner and dual distributions describe a subset by its pair counts and projections onto common eigenspaces. Their positivity and change of basis give the linear programming bounds extended to QAS in Section~\ref{sec:delsarte-theory}.

For \(r\in\mathbb Z_{\ge0}\), put \([r]:=\{1,\ldots,r\}\).
For a nonempty finite set \(X\), write \(\binom{X}{r}:=\{B\subseteq X:|B|=r\}\), and let \(\C^X\) be the space of complex functions on \(X\), represented as column vectors.
Write \(I\) for the identity matrix and \(J\) for the all-ones matrix.
We use \(\dagger\) for conjugate transpose and \(\succeq0\) for positive semidefiniteness.

\begin{definition}\label{def:symmetric-as}
For \(d\in\mathbb Z_{\ge0}\), a symmetric association scheme of class \(d\) on \(X\) consists of nonzero \(0,1\)-matrices \(B_0,\ldots,B_d\) indexed by \(X\) and satisfying
\[
\begin{gathered}
B_0=I,\qquad
\sum_{i=0}^dB_i=J,\qquad
B_i^T=B_i,\\
B_iB_j\in\operatorname{span}_{\C}\{B_0,\ldots,B_d\}
\quad(0\leq i,j\leq d).
\end{gathered}
\]
\end{definition}

The condition \(\sum_iB_i=J\) partitions the ordered pairs of \(X\) into relations. Their adjacency matrices span the Bose--Mesner algebra, a commutative \(C^\ast\)-algebra.
Put \(v:=|X|\).
Its primitive orthogonal idempotents \(E_0,\ldots,E_d\) satisfy
\[
E_iE_j=\delta_{ij}E_i,\qquad
E_j^\dagger=E_j,\qquad
\sum_{j=0}^dE_j=I,
\]
and are indexed so that \(E_0=v^{-1}J\)~\cite[Section~2]{MartinTanaka2009}.
For the entrywise Schur product \(\bullet\),
\[
B_i\bullet B_j=\delta_{ij}B_i.
\]
The adjacency matrices \(B_i\) describe pair relations, whereas the idempotents \(E_j\) project onto the common eigenspaces. They are bases for the same algebra, associated with its two products. Their change of basis defines the first eigenmatrix \(P=(P_{ji})\) and second eigenmatrix \(Q=(Q_{ij})\):
\begin{equation}\label{eq:classical-eigenmatrices}
B_i=\sum_{j=0}^dP_{ji}E_j,
\qquad
E_j=\frac1v\sum_{i=0}^dQ_{ij}B_i.
\end{equation}
In particular, \(P^{-1}=v^{-1}Q\).

\begin{definition}\label{def:classical-distributions}
For \(\varnothing\ne C\subseteq X\), let \(\chi_C\in\C^X\) be its characteristic vector. Its inner distribution \(a=(a_i)_{i=0}^d\) and dual distribution \(b=(b_j)_{j=0}^d\) are
\[
a_i:=\frac1{|C|}\chi_C^\dagger B_i\chi_C,
\qquad
b_j:=\frac v{|C|}\chi_C^\dagger E_j\chi_C
\qquad(0\le i,j\le d).
\]
\end{definition}

The entry \(a_i\) counts ordered pairs in \(C\times C\) belonging to the relation with adjacency matrix \(B_i\), divided by \(|C|\). The entry \(b_j\) is \(v/|C|\) times the squared norm of \(E_j\chi_C\).
The dual distribution here is \(|C|\) times that in~\cite[Theorem~3.11]{Delsarte1973}; the vanishing conditions are unchanged.

\begin{proposition}\label{prop:classical-distributions}
For every nonempty \(C\subseteq X\), its distributions satisfy
\[
\begin{aligned}
a_0&=1,& \sum_{i=0}^da_i&=|C|,\\
b_0&=|C|,& \sum_{j=0}^db_j&=v,
\end{aligned}
\]
the MacWilliams identity and its inverse,
\[
b_j=\sum_{i=0}^dQ_{ij}a_i,\qquad
a_i=\frac1v\sum_{j=0}^dP_{ji}b_j,
\]
and the nonnegativity conditions
\[
a_i\geq0\quad(0\leq i\leq d),\qquad
b_j\geq0\quad(0\leq j\leq d).
\]
\end{proposition}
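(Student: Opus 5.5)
The plan is to obtain every assertion directly from the definitions in Definition~\ref{def:classical-distributions}, the identities $B_0=I$, $\sum_iB_i=J$, $E_0=v^{-1}J$, $\sum_jE_j=I$, and the change of basis \eqref{eq:classical-eigenmatrices}. Throughout, write $n:=|C|$ and note that $\chi_C$ is a real $0,1$-vector with $\chi_C^\dagger\chi_C=n$ and $\chi_C^\dagger J\chi_C=n^2$.

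\textbf{Normalizations.} First I would compute the four normalization identities.
\begin{itemize}
\item From $B_0=I$ we get $a_0=n^{-1}\chi_C^\dagger\chi_C=1$.
\item From $\sum_iB_i=J$ we get $\sum_ia_i=n^{-1}\chi_C^\dagger J\chi_C=n$.
\item From $E_0=v^{-1}J$ we get $b_0=(v/n)\,v^{-1}n^2=n$.
\item From $\sum_jE_j=I$ we get $\sum_jb_j=(v/n)\,n=v$.
\end{itemize}

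\textbf{MacWilliams identity and its inverse.} Next I would sandwich the second relation of \eqref{eq:classical-eigenmatrices} between $\chi_C^\dagger$ and $\chi_C$:
\[
\chi_C^\dagger E_j\chi_C=\frac1v\sum_iQ_{ij}\,\chi_C^\dagger B_i\chi_C=\frac nv\sum_iQ_{ij}a_i .
\]
Multiplying by $v/n$ gives $b_j=\sum_iQ_{ij}a_i$. Sandwiching the first relation $B_i=\sum_jP_{ji}E_j$ in the same way gives
\[
na_i=\sum_jP_{ji}\,\chi_C^\dagger E_j\chi_C=\frac nv\sum_jP_{ji}b_j ,
\]
which is the inverse identity. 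Alternatively, it follows from the first identity via $P^{-1}=v^{-1}Q$.

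\textbf{Nonnegativity.} Each $a_i$ is $n^{-1}$ times a sum of entries of the $0,1$-matrix $B_i$ over $C\times C$, so it counts pairs and is nonnegative. For the dual distribution, $E_j$ is a Hermitian idempotent, so
\[
\chi_C^\dagger E_j\chi_C=\chi_C^\dagger E_j^\dagger E_j\chi_C=\|E_j\chi_C\|^2\ge0,
\]
and hence $b_j\ge0$.

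I expect no real obstacle here. The only point needing care is the normalization convention: the factor $v/|C|$ in $b_j$ must match the $1/v$ in the expansion of $E_j$, and the bookkeeping above confirms that the factors of $n$ and $v$ cancel correctly. The nonnegativity of $b_j$ is the one genuinely nontrivial inequality, and it relies only on $E_j$ being an orthogonal projection.
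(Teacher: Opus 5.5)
Your proposal is correct and follows essentially the same route as the paper. You derive the four normalizations from $B_0=I$, $\sum_iB_i=J$, $E_0=v^{-1}J$, $\sum_jE_j=I$ together with $\chi_C^\dagger\chi_C=|C|$ and $\chi_C^\dagger J\chi_C=|C|^2$; you obtain the MacWilliams identity and its inverse by sandwiching \eqref{eq:classical-eigenmatrices} between $\chi_C^\dagger$ and $\chi_C$; and you get nonnegativity from the nonnegative entries of $B_i$ and from $E_j\succeq0$. Your explicit bookkeeping of the factors $v$ and $|C|$ just fills in details the paper leaves to the reader.
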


\begin{proof}
The characteristic vector satisfies \(\chi_C^\dagger\chi_C=|C|\) and \(\chi_C^\dagger J\chi_C=|C|^2\). Applying \(B_0=I\), \(\sum_iB_i=J\), \(E_0=v^{-1}J\), and \(\sum_jE_j=I\) in the distribution formulas gives the four normalizations.
Substituting \eqref{eq:classical-eigenmatrices} into the distribution formulas gives the MacWilliams identity and its inverse~\cite[Section~2]{MartinTanaka2009}.
The inner distribution is nonnegative because the \(B_i\) have nonnegative entries; the dual distribution is nonnegative because \(E_j\succeq0\).
\end{proof}

Code conditions restrict pair relations, whereas design conditions require orthogonality to selected eigenspaces.

\begin{definition}\label{def:classical-code-design}
Let
\[
0\in\Delta\subseteq\{0,\ldots,d\},
\qquad T\subseteq\{1,\ldots,d\}.
\]
A nonempty subset \(C\subseteq X\) is a \(\Delta\)-code or a \(T\)-design if, respectively,
\[
a_i=0\quad(i\notin\Delta),
\qquad
b_j=0\quad(j\in T).
\]
\end{definition}

By the distribution formulas, \(a_i=0\) means that no ordered pair in \(C\times C\) belongs to the \(i\)-th relation, and \(b_j=0\) is equivalent to \(E_j\chi_C=0\).

To bound \(|C|\), we optimize over all real vectors satisfying these necessary conditions. Every subset gives a feasible vector, although the converse need not hold.
For \(0\in\Delta\subseteq\{0,\ldots,d\}\), the code linear program in \(a\in\R^{d+1}\) is
\begin{equation}\label{def:classical-code-program}
\begin{array}{ll}
\text{maximize} & \displaystyle\sum_{i=0}^d a_i,\\
\text{subject to} & a_0=1,\\
& a_i=0\quad(i\notin\Delta),\\
& a_i\geq0\quad(0\leq i\leq d),\\
& \displaystyle\sum_{i=0}^dQ_{ij}a_i\geq0\quad(0\leq j\leq d).
\end{array}
\end{equation}
For \(T\subseteq\{1,\ldots,d\}\), the design linear program in \(b\in\R^{d+1}\) is
\begin{equation}\label{def:classical-design-program}
\begin{array}{ll}
\text{minimize} & b_0,\\
\text{subject to} & b_j=0\quad(j\in T),\\
& b_j\geq0\quad(0\leq j\leq d),\\
& \displaystyle\frac1v\sum_{j=0}^dP_{ji}b_j\geq0\quad(0\leq i\leq d),\\
& \displaystyle\frac1v\sum_{j=0}^db_j=1.
\end{array}
\end{equation}

By Proposition~\ref{prop:classical-distributions}, the inner distribution of a \(\Delta\)-code is feasible for \eqref{def:classical-code-program}, and the dual distribution of a \(T\)-design is feasible for \eqref{def:classical-design-program}. Both have objective value \(|C|\), so these programs give an upper bound on code size and a lower bound on design size, respectively~\cite{Delsarte1973,MartinTanaka2009}.

The design program~\eqref{def:classical-design-program} uses a different normalization from the usual formulation of Delsarte's linear programming bound for designs~\cite[Sections~3.2 and~3.4]{Delsarte1973}; see also~\cite[Section~7]{MartinTanaka2009}.
For every feasible vector \(b\), its inverse transform \(a_i=v^{-1}\sum_jP_{ji}b_j\) satisfies \(a_i\ge0\), \(a_0=1\), and \(b_0=\sum_i a_i\ge1\).
Thus \(u_j:=b_j/b_0\) is well defined and satisfies \(u_0=1\), with the remaining homogeneous inequalities and vanishing conditions unchanged.
The inverse transformation is
\[
b_j=\frac{vu_j}{\sum_{\ell=0}^d u_\ell},\qquad
b_0=\frac{v}{\sum_{\ell=0}^d u_\ell}.
\]
Consequently, minimizing \(b_0\) is equivalent to maximizing \(\sum_j u_j\), with the minimum equal to \(v\) divided by the latter maximum; the two formulations therefore give the same design bound.

\section{Delsarte theory for quantum association schemes}\label{sec:delsarte-theory}

The classical bounds in Section~\ref{sec:classical-delsarte} use two nonnegative distributions related by the eigenmatrices.
For QAS, the simple ideals for either product can be matrix algebras.
We replace scalar distribution entries by linear functionals on these ideals and derive conic bounds for effective size.

\subsection{Finite quantum spaces and QAS}\label{sec:finite-quantum-spaces}

Finite quantum spaces replace function algebras on finite sets by finite-dimensional \(C^\ast\)-algebras.
We use the normalization of Gromada~\cite{Gromada2022Examples,Gromada2024}, which gives the usual sum over \(X\) when \(A=\C^X\).

\begin{definition}
A finite quantum space is a pair \((A,h_A)\), where \(A\ne\{0\}\) is a finite-dimensional unital \(C^\ast\)-algebra and
\[
A\cong\bigoplus_{\alpha=1}^rM_{n_\alpha}(\C),
\qquad
h_A((x_\alpha)_{\alpha=1}^r)
:=\sum_{\alpha=1}^rn_\alpha\operatorname{Tr}(x_\alpha).
\]
Here \(r,n_\alpha\in\mathbb Z_{\ge1}\), and \(\operatorname{Tr}\) is the usual matrix trace.
\end{definition}

The trace is independent of the chosen \(\ast\)-isomorphism~\cite[Examples~1.3--1.4 and Definition~1.5]{Gromada2024}.
We regard \(A\) as a Hilbert space with
\[
\langle x,y\rangle:=h_A(x^\ast y)
\qquad(x,y\in A),
\]
and use \(\dagger\) for adjoints with respect to this inner product and its tensor products.
The multiplication and unit maps are
\[
\begin{aligned}
m:A\otimes A&\longrightarrow A,& m(x\otimes y)&=xy,\\
\eta:\C&\longrightarrow A,& \eta(\lambda)&=\lambda1_A.
\end{aligned}
\]
With this normalization~\cite[Definitions~1.2 and~1.5]{Gromada2024},
\[
\eta^\dagger=h_A,\qquad
mm^\dagger=\id_A,\qquad
h_A(ab)=h_A(ba)\quad(a,b\in A).
\]
Write
\begin{equation}\label{eq:normalization-constant}
N:=(\eta^\dagger\eta)(1)=h_A(1_A)=\dim_\C A.
\end{equation}

Write \(\End(A):=\End_{\C}(A)\) and retain composition on this space.
The quantum Schur product and its involution are
\[
S\mathbin{\bullet}T:=m(S\otimes T)m^\dagger,
\qquad
S^\ast(x):=(S(x^\ast))^\ast
\qquad(S,T\in\End(A),\ x\in A).
\]
Set
\[
I:=\id_A,\qquad J:=\eta\eta^\dagger,
\qquad J(x)=h_A(x)1_A\quad(x\in A).
\]
Thus composition has unit \(I\) and involution \(\dagger\), whereas \((\End(A),\bullet,\ast)\) is a finite-dimensional \(C^\ast\)-algebra with unit \(J\)~\cite[Proposition~1.17]{Gromada2022Examples}.
The identities \(mm^\dagger=\id_A\) and \(J(x)=h_A(x)1_A\) give
\[
I\bullet I=I,\qquad J^2=NJ.
\]
Since \(\|1_A\|^2=N\),
\[
E_0:=N^{-1}J
\]
is the orthogonal projection onto \(\C1_A\).
For either algebra structure, projections are self-adjoint idempotents for its product and involution.

\begin{definition}\label{def:qas}
A quantum coherent algebra on \(A\) is a complex linear subspace \(W\subseteq\End(A)\) such that
\[
I,J\in W,\qquad
ST,\ S\bullet T,\ S^\dagger,\ S^\ast\in W
\qquad(S,T\in W).
\]
It is a quantum association scheme (QAS) if
\[
\left\{
p\in W:
p\bullet p=p=p^\ast,\ 
p\bullet I=I\bullet p=p
\right\}=\{0,I\}.
\]
A QAS is commutative or cocommutative, respectively, if
\[
ST=TS\quad(S,T\in W),
\qquad\text{or}\qquad
S\bullet T=T\bullet S\quad(S,T\in W).
\]
\end{definition}

The QAS condition says that \(I\) is a minimal nonzero projection in \((W,\bullet,\ast)\).
It is the homogeneity condition in~\cite[Definition~2.1]{Gromada2024}.

For \(A=\C^X\), the quantum Schur product is the entrywise product of operator matrices, and \(\ast\) is entrywise complex conjugation.
Thus the Bose--Mesner algebra of an association scheme on \(X\) is a cocommutative QAS, which is also commutative when the scheme is symmetric~\cite[Theorem~2.9]{Gromada2024}.

\subsection{Distributions and the MacWilliams identity}\label{sec:simple-ideals}\label{sec:distributions}

Fix a QAS \(W\subseteq\End(A)\), and write \(\circ\) for composition.
The structure theorem for finite-dimensional \(C^\ast\)-algebras~\cite[Theorems~III.1.1--III.1.2]{Davidson1996} gives
\[
\begin{aligned}
(W,\bullet,\ast)&\cong\bigoplus_{a\in\mathcal C}(V_a,\bullet,\ast),\\
(W,\circ,\dagger)&\cong\bigoplus_{i\in\mathcal S}(U_i,\circ,\dagger),
\end{aligned}
\]
where \(\mathcal C,\mathcal S\) index the simple ideals for the respective products.
Their identity elements \(z_a,e_i\) are the corresponding minimal central projections:
\[
V_a=z_a\bullet W,\qquad U_i=e_i\circ W,\qquad
\sum_{a\in\mathcal C}z_a=J,\qquad
\sum_{i\in\mathcal S}e_i=I.
\]

Homogeneity makes \(I\) a minimal projection in \((W,\bullet,\ast)\), while the rank-one operator \(E_0\) is a minimal projection in \((W,\circ,\dagger)\).
Choose the indices so that
\[
I\in V_0,\qquad E_0\in U_0.
\]
Here \(e_0\) is the identity of the whole ideal \(U_0\), while \(E_0\) projects onto \(\C1_A\); these projections need not coincide.

Projecting from one decomposition to the other gives the component maps
\[
\begin{aligned}
P_{ia}:V_a&\longrightarrow U_i,&
P_{ia}(x)&:=e_i\circ x,\\
Q_{ai}:U_i&\longrightarrow V_a,&
Q_{ai}(u)&:=Nz_a\bullet u,
\end{aligned}
\]
for \(a\in\mathcal C\) and \(i\in\mathcal S\).
The induced maps on the direct sums are
\[
P=(P_{ia}):\bigoplus_{a\in\mathcal C}V_a\longrightarrow
\bigoplus_{i\in\mathcal S}U_i,\qquad
Q=(Q_{ai}):\bigoplus_{i\in\mathcal S}U_i\longrightarrow
\bigoplus_{a\in\mathcal C}V_a.
\]

Summing the components identifies \(P\) with \(\id_W\) and \(Q\) with \(N\id_W\), since \(\sum_i e_i=I\) and \(\sum_a z_a=J\).
Consequently,
\begin{equation}\label{eq:inverse-block-maps}
PQ=N\id,\qquad QP=N\id
\end{equation}
on the respective direct sums, so \(P\) and \(N^{-1}Q\) are inverse.

For a symmetric association scheme, the \(B_a\) and \(E_i\) are the minimal projections for the respective products, so
\[
V_a=\C B_a,\qquad U_i=\C E_i
\quad(0\leq a,i\leq d).
\]
Since \(N=|X|\), the expansions in \eqref{eq:classical-eigenmatrices} identify \(P\) and \(Q\), in these bases, with the first and second eigenmatrices.

We now define the distributions of positive elements.
Write
\[
A_+:=\{y^\ast y:y\in A\}.
\]
For \(A=\C^X\), these are nonnegative functions, and projections are characteristic functions.
Their cardinality extends to the following quantity, which is unchanged by positive rescaling.

\begin{definition}\label{def:effective-size}
For \(\chi\in A_+\setminus\{0\}\), its effective size is
\[
\esize(\chi):=\frac{h_A(\chi)^2}{h_A(\chi^2)}.
\]
\end{definition}

When \(A=M_d(\C)\), the effective size is \(d\) times the quantity \((\operatorname{Tr}\chi)^2/\operatorname{Tr}(\chi^2)\) in~\cite[Eq.~(48)]{IosueEtAl2024}.

Classical distribution entries evaluate one basis operator in each ideal.
For QAS, we retain the quadratic evaluations of all operators in each ideal.

\begin{definition}\label{def:inner-distribution}\label{def:dual-distribution}
For \(\chi\in A_+\setminus\{0\}\), its inner distribution \(A^\chi=(A_a^\chi)_{a\in\mathcal C}\) and dual distribution \(B^\chi=(B_i^\chi)_{i\in\mathcal S}\) are defined by
\begin{equation}\label{eq:inner-distribution}
A_a^\chi:V_a\longrightarrow\C,\qquad
A_a^\chi(x):=\frac{\langle\chi,x(\chi)\rangle}{h_A(\chi^2)}
\qquad(x\in V_a)
\end{equation}
and
\begin{equation}\label{eq:dual-distribution}
B_i^\chi:U_i\longrightarrow\C,\qquad
B_i^\chi(u):=\frac{N\langle\chi,u(\chi)\rangle}{h_A(\chi^2)}
\qquad(u\in U_i).
\end{equation}
\end{definition}

These maps are complex linear.
For a family \(\mathsf A=(\mathsf A_a)_{a\in\mathcal C}\) of complex linear functionals \(\mathsf A_a:V_a\to\C\), write
\[
\mathsf A(w):=\sum_{a\in\mathcal C}\mathsf A_a(z_a\bullet w)
\qquad(w\in W).
\]
In particular,
\[
A^\chi(w)=\frac{\langle\chi,w(\chi)\rangle}{h_A(\chi^2)}.
\]

The following identities give the normalization and objective values used in the conic programs.

\begin{proposition}\label{lem:distribution-normalization}
Multiplying \(\chi\) by a positive scalar leaves its effective size and distributions unchanged.
The distributions satisfy
\[
\begin{aligned}
A_0^\chi(I)&=1,&
\sum_{a\in\mathcal C}A_a^\chi(z_a)&=A^\chi(J)=\esize(\chi),\\
B_0^\chi(E_0)&=\esize(\chi),&
\sum_{i\in\mathcal S}B_i^\chi(e_i)&=N,
\end{aligned}
\]
and
\[
A_a^\chi(x^\ast)=\overline{A_a^\chi(x)}
\quad(x\in V_a),\qquad
B_i^\chi(u^\dagger)=\overline{B_i^\chi(u)}
\quad(u\in U_i).
\]
Every nonzero projection \(p\in A\) has effective size \(\esize(p)=h_A(p)\).
\end{proposition}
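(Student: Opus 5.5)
We verify each identity directly from Definition~\ref{def:inner-distribution}, using the formula \(A^\chi(w)=\langle\chi,w(\chi)\rangle/h_A(\chi^2)\) and the facts \(\sum_a z_a=J\), \(\sum_i e_i=I\), \(I\in V_0\), \(E_0\in U_0\).

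\emph{Scaling invariance.} For \(\lambda>0\), the numerators \(h_A(\chi)^2\), \(\langle\chi,x(\chi)\rangle\), \(\langle\chi,u(\chi)\rangle\) and the denominator \(h_A(\chi^2)\) all acquire the same factor \(\lambda^2\), so effective size and both distributions are unchanged.

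\emph{Normalizations.} Since \(I\in V_0\), we have \(A_0^\chi(I)=\langle\chi,\chi\rangle/h_A(\chi^2)=h_A(\chi^\ast\chi)/h_A(\chi^2)=1\), using \(\chi^\ast=\chi\). Next, \(z_a\bullet J=z_a\) because \(J\) is the \(\bullet\)-unit, so \(\sum_aA_a^\chi(z_a)=A^\chi(J)\). Also \(\langle\chi,J\chi\rangle=h_A(\chi)\langle\chi,1_A\rangle=h_A(\chi)\overline{h_A(\chi)}\cdots\); more precisely \(\langle\chi,1_A\rangle=h_A(\chi^\ast)=h_A(\chi)\), which is real and nonnegative since \(\chi\in A_+\) and \(h_A\) is a positive trace. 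Hence \(A^\chi(J)=\esize(\chi)\). Using \(E_0=N^{-1}J\in U_0\), we get \(B_0^\chi(E_0)=N\cdot N^{-1}\langle\chi,J\chi\rangle/h_A(\chi^2)=\esize(\chi)\). Finally, \(\sum_iB_i^\chi(e_i)=N\langle\chi,(\sum_ie_i)\chi\rangle/h_A(\chi^2)=N\langle\chi,\chi\rangle/h_A(\chi^2)=N\). Throughout, \(h_A(\chi^2)>0\) because \(h_A\) is faithful and \(\chi\ne0\) is self-adjoint.

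\emph{Conjugation identities.} For \(u\in U_i\), we have \(u^\dagger\in U_i\) since \(e_i\) is a central \(\dagger\)-projection, and \(\langle\chi,u^\dagger\chi\rangle=\langle u\chi,\chi\rangle=\overline{\langle\chi,u\chi\rangle}\). For \(x\in V_a\), we have \(x^\ast\in V_a\), and we need \(\langle\chi,x^\ast(\chi)\rangle=\overline{\langle\chi,x(\chi)\rangle}\). Writing \(\langle\chi,y\rangle=h_A(\chi y)\), the left side equals \(h_A(\chi\, x(\chi)^\ast)\). Using \(h_A(z^\ast)=\overline{h_A(z)}\) and the trace property, \(h_A(\chi\,x(\chi)^\ast)=\overline{h_A(x(\chi)\chi)}=\overline{h_A(\chi\,x(\chi))}\), which is the desired conjugate. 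This step, which must combine the involution \(\ast\) with the traciality of \(h_A\) and the self-adjointness of \(\chi\), is the only mildly delicate one.

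\emph{Projections.} If \(p=p^\ast=p^2\ne0\), then \(\esize(p)=h_A(p)^2/h_A(p)=h_A(p)\), where \(h_A(p)>0\) by faithfulness.
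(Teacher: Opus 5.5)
Your proposal is correct and follows essentially the same route as the paper: rescaling by \(\lambda^2\), evaluation at \(I\), \(J\) and \(E_0=N^{-1}J\), linearity via \(\sum_a z_a=J\) and \(\sum_i e_i=I\), the Hilbert-space adjoint for \(B_i^\chi\), and, for \(A_a^\chi\), traciality together with \(\chi^\ast=\chi\) (so that \(x^\ast(\chi)=x(\chi)^\ast\)). The only blemish is the aborted expression \(h_A(\chi)\overline{h_A(\chi)}\cdots\), which you should delete, since your corrected computation \(\langle\chi,1_A\rangle=h_A(\chi)\) already gives \(\langle\chi,J(\chi)\rangle=h_A(\chi)^2\).
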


\begin{proof}
Since \(\chi\ne0\), the denominator \(h_A(\chi^2)=\langle\chi,\chi\rangle\) is positive.
The numerators and denominator all scale by the square of the positive scalar, proving invariance under rescaling.
The normalization at \(I\) follows from \(I(\chi)=\chi\), while \(J(\chi)=h_A(\chi)1_A\) and \(E_0=N^{-1}J\) give the two values equal to \(\esize(\chi)\).
The sums follow by linearity from \(\sum_a z_a=J\) and \(\sum_i e_i=I\).
For the involution \(\ast\), traciality and \(\chi=\chi^\ast\) give
\[
\overline{\langle\chi,x(\chi)\rangle}
=h_A\bigl(x(\chi)^\ast\chi\bigr)
=h_A\bigl(\chi x(\chi)^\ast\bigr)
=\langle\chi,x^\ast(\chi)\rangle.
\]
The identity for \(B_i^\chi\) follows from the defining property of the Hilbert-space adjoint \(\dagger\).
Finally, \(p^2=p\) gives \(h_A(p^2)=h_A(p)\), proving the projection formula.
\end{proof}

For the characteristic function \(\chi_C\) of a nonempty subset in a symmetric association scheme, these definitions recover the distributions of Section~\ref{sec:classical-delsarte}:
\[
h_A(\chi_C^2)=|C|,\qquad
A_i^{\chi_C}(B_i)=a_i,\qquad
B_j^{\chi_C}(E_j)=b_j.
\]

The two distributions restrict the same functional on \(W\), with a factor of \(N\) for the dual distribution.
A prime denotes the complex linear dual, without conjugation.
The dual maps of \(P\) and \(Q\) act by
\[
(Q'\mathsf A)_i(u)=\sum_{a\in\mathcal C}\mathsf A_a(Q_{ai}(u)),
\qquad
(P'\mathsf B)_a(x)=\sum_{i\in\mathcal S}\mathsf B_i(P_{ia}(x)).
\]
Thus \((Q'\mathsf A)_i(u)=N\mathsf A(u)\) for \(u\in U_i\).

\begin{theorem}\label{thm:macwilliams}
The maps \(Q'\) and \(N^{-1}P'\) are inverse complex linear maps.
For every \(\chi\in A_+\setminus\{0\}\),
\begin{equation}\label{eq:block-macwilliams}
B^\chi=Q'A^\chi,\qquad
A^\chi=N^{-1}P'B^\chi.
\end{equation}
\end{theorem}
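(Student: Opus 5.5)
The plan is to dualize the block identity \eqref{eq:inverse-block-maps} and then check the first formula in \eqref{eq:block-macwilliams} directly from the definitions; the second formula will follow from the first.

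\textbf{Step 1: $Q'$ and $N^{-1}P'$ are inverse.} Taking complex linear duals in $PQ=N\id$ and $QP=N\id$ reverses the order of composition, giving
\[
Q'P'=(PQ)'=N\id,\qquad P'Q'=(QP)'=N\id.
\]
These hold on the spaces of families of functionals $\bigoplus_i U_i'$ and $\bigoplus_a V_a'$, respectively. Hence $Q'$ and $N^{-1}P'$ are mutually inverse. Complex linearity is automatic, because primes denote duals without conjugation.

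One point needs checking: the displayed formulas for $(Q'\mathsf A)_i$ and $(P'\mathsf B)_a$ must really be the transposes of the block matrices $P$ and $Q$. This follows from the definitions $(Q'\mathsf A)(u)=\mathsf A(Qu)$ and $(P'\mathsf B)(x)=\mathsf B(Px)$, expanded componentwise.

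\textbf{Step 2: $B^\chi=Q'A^\chi$.} Fix $i$ and $u\in U_i$. By the remark preceding the theorem, $(Q'A^\chi)_i(u)=N\,A^\chi(u)$. I will re-derive it to be safe:
\[
\sum_a A_a^\chi(Nz_a\bullet u)=N\sum_a \frac{\langle\chi,(z_a\bullet u)(\chi)\rangle}{h_A(\chi^2)}.
\]
Since $\sum_a z_a=J$ and $J\bullet u=u$ ($J$ is the unit for $\bullet$), this equals
\[
N\,\frac{\langle\chi,u(\chi)\rangle}{h_A(\chi^2)}=B_i^\chi(u).
\]
Here one must note that $z_a\bullet u\in V_a$, so each $A_a^\chi$ is applied within its domain. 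Also, $A^\chi$ on $V_a$ is the restriction of the single functional $w\mapsto\langle\chi,w(\chi)\rangle/h_A(\chi^2)$ on $W$, so summing components returns that functional at $\sum_a z_a\bullet u$.

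\textbf{Step 3: the inverse formula.} Apply $N^{-1}P'$ to both sides of Step 2 and use Step 1 to get $A^\chi=N^{-1}P'B^\chi$. As a cross-check I will also verify it directly: for $x\in V_a$,
\[
N^{-1}\sum_i B_i^\chi(e_i\circ x)=\frac{\langle\chi,(\sum_i e_i)x(\chi)\rangle}{h_A(\chi^2)}=A_a^\chi(x),
\]
using $\sum_i e_i=I$.

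\textbf{Main obstacle.} I expect the only delicate point to be bookkeeping: making sure each component map lands in the correct ideal, and that identifying $P$ with $\id_W$ and $Q$ with $N\id_W$ is compatible with how the dual families are paired. Once everything is expressed through the single functional $w\mapsto\langle\chi,w(\chi)\rangle$ on $W$, both identities reduce to $\sum_a z_a=J$ and $\sum_i e_i=I$.
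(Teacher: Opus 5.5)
Your proposal is correct and follows essentially the same route as the paper. Both dualize \eqref{eq:inverse-block-maps} to get \(Q'P'=P'Q'=N\id\), compute \((Q'A^\chi)_i(u)=NA^\chi(u)=B_i^\chi(u)\) from \(\sum_a z_a=J\), and obtain the inverse formula by applying \(N^{-1}P'\); your direct cross-check of \(A^\chi=N^{-1}P'B^\chi\) via \(\sum_i e_i=I\) is a harmless extra that the paper omits.
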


\begin{proof}
Dualizing \eqref{eq:inverse-block-maps} gives \(Q'P'=N\id\) and \(P'Q'=N\id\).
To identify the dual distribution, let \(u\in U_i\).
Since \(Q_{ai}(u)=Nz_a\bullet u\) and \(\sum_a z_a=J\),
\[
(Q'A^\chi)_i(u)=NA^\chi(u)
=\frac{N\langle\chi,u(\chi)\rangle}{h_A(\chi^2)}
=B_i^\chi(u).
\]
Applying the inverse map \(N^{-1}P'\) gives the second identity.
\end{proof}

For a symmetric association scheme, evaluating \eqref{eq:block-macwilliams} at the basis elements \(B_a\) and \(E_i\) recovers the scalar transforms in Proposition~\ref{prop:classical-distributions}.

\subsection{Codes, designs, and bounds}\label{sec:conic-bounds}

As in the classical case, codes restrict the support of the inner distribution, and designs require specified entries of the dual distribution to vanish.
Since an entry is now a functional, the condition applies to all elements of the corresponding ideal.

\begin{definition}\label{def:qas-code}\label{def:qas-design}
Let \(0\in\Delta\subseteq\mathcal C\) and \(T\subseteq\mathcal S\setminus\{0\}\).
An element \(\chi\in A_+\setminus\{0\}\) is a \(\Delta\)-code if
\[
A_a^\chi=0\qquad(a\notin\Delta),
\]
and a \(T\)-design if
\[
B_i^\chi=0\qquad(i\in T).
\]
\end{definition}

The following lemma checks the design condition directly on \(\chi\).

\begin{lemma}\label{lem:spectral-vanishing}
For \(\chi\in A_+\setminus\{0\}\) and \(i\in\mathcal S\),
\[
B_i^\chi=0\quad\Longleftrightarrow\quad e_i(\chi)=0.
\]
\end{lemma}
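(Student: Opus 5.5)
The plan is to exploit that \(U_i=e_i\circ W\) is an ideal for composition with identity \(e_i\), and that \(e_i\) is an orthogonal projection on the Hilbert space \(A\).

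For the direction \(\Leftarrow\), assume \(e_i(\chi)=0\). Every \(u\in U_i\) satisfies \(u=e_i\circ u\), and since \(e_i^\dagger=e_i\) we get
\[
\langle\chi,u(\chi)\rangle=\langle\chi,e_i(u(\chi))\rangle=\langle e_i(\chi),u(\chi)\rangle=0 .
\]
Hence \(B_i^\chi(u)=0\) for every \(u\in U_i\), that is, \(B_i^\chi=0\).

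For the direction \(\Rightarrow\), assume \(B_i^\chi=0\). Evaluating at \(u=e_i\in U_i\) gives \(\langle\chi,e_i(\chi)\rangle=0\). Because \(e_i\) is a projection for \((W,\circ,\dagger)\), it satisfies \(e_i=e_i^\dagger e_i\). Therefore
\[
0=\langle\chi,e_i(\chi)\rangle=\langle e_i(\chi),e_i(\chi)\rangle=\|e_i(\chi)\|^2,
\]
so \(e_i(\chi)=0\). The positive denominator \(h_A(\chi^2)\) and the factor \(N\) in \eqref{eq:dual-distribution} play no role, since they only rescale the functional.

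The only point needing care is that \(e_i\) really is a self-adjoint idempotent with respect to the Hilbert-space adjoint \(\dagger\) on \(A\). This holds because the \(C^\ast\)-structure on \((W,\circ,\dagger)\) uses exactly this adjoint, and minimal central projections are projections for that structure. Positivity of \(\chi\) is not needed for the argument.
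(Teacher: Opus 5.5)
Your proof is correct and follows essentially the same route as the paper: evaluate \(B_i^\chi\) at \(e_i\) to get \(\|e_i(\chi)\|^2\) for the forward direction, and use that \(e_i\) is the identity of \(U_i\) for the converse. The only difference is that the paper writes \(u=u\circ e_i\), so \(u(\chi)=u(e_i(\chi))=0\) directly, which avoids your appeal to \(e_i^\dagger=e_i\) in that direction.
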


\begin{proof}
If \(e_i(\chi)=0\), then every \(u\in U_i\) annihilates \(\chi\), since \(u=u\circ e_i\).
This proves \(B_i^\chi=0\).
For the converse, it suffices to evaluate the functional at the orthogonal projection \(e_i\in U_i\):
\[
B_i^\chi(e_i)=\frac{N}{h_A(\chi^2)}\langle\chi,e_i(\chi)\rangle
=\frac{N}{h_A(\chi^2)}\|e_i(\chi)\|^2.
\]
Thus \(B_i^\chi=0\) implies \(e_i(\chi)=0\).
\end{proof}

To obtain bounds, we retain the linear and positivity conditions of distributions.
For the inner distribution, we test quadratic forms on \(A_+\); for the dual distribution, on all of \(A\).
Set
\[
\begin{aligned}
R_a^+&:=\left\{x\in V_a:
x^\ast=x,\ \langle\chi,x(\chi)\rangle\geq0\quad(\chi\in A_+)
\right\},\\
K_i^+&:=\left\{u\in U_i:
u^\dagger=u,\ \langle\xi,u(\xi)\rangle\geq0\quad(\xi\in A)
\right\}
\qquad(a\in\mathcal C,\ i\in\mathcal S).
\end{aligned}
\]

The cone \(K_i^+\) consists of positive operators on the Hilbert space \(A\), whereas \(R_a^+\) requires nonnegativity only on positive elements of the algebra \(A\).
For \(A=\C^X\), the latter tests nonnegative vectors, as in copositive programming~\cite{Duer2010}.

The variables of the conic programs are families of complex linear functionals \(\mathsf A_a\in V_a'\) and \(\mathsf B_i\in U_i'\) subject to
\begin{equation}\label{eq:conic-feasibility}
\begin{gathered}
\mathsf B=Q'\mathsf A,\qquad \mathsf A_0(I)=1,\\
\mathsf A_a(x^\ast)=\overline{\mathsf A_a(x)}\quad(x\in V_a),\qquad
\mathsf A_a(x)\geq0\quad(x\in R_a^+),\\
\mathsf B_i(u^\dagger)=\overline{\mathsf B_i(u)}\quad(u\in U_i),\qquad
\mathsf B_i(u)\geq0\quad(u\in K_i^+)
\end{gathered}
\end{equation}
for all \(a\in\mathcal C\) and \(i\in\mathcal S\).
These conditions retain the MacWilliams relation, normalization, compatibility with the involutions, and nonnegativity of actual distributions.
By Theorem~\ref{thm:macwilliams}, \(\mathsf B=Q'\mathsf A\) is equivalent to \(\mathsf A=N^{-1}P'\mathsf B\).
Since \(I\in V_0\), normalization also takes the form
\[
\frac1N\sum_{i\in\mathcal S}\mathsf B_i(e_i)=1,
\]
because the left-hand side equals \(\mathsf A(I)=\mathsf A_0(I)\).

\begin{proposition}\label{prop:distribution-positivity}
The sets \(R_a^+\) and \(K_i^+\) are closed convex cones in the respective self-adjoint real subspaces, and \(K_i^+\) consists of the positive operators in \(U_i\).
For every \(\chi\in A_+\setminus\{0\}\),
\[
A_a^\chi(x)\geq0\quad(x\in R_a^+),\qquad
B_i^\chi(u)\geq0\quad(u\in K_i^+).
\]
Every pair satisfying \eqref{eq:conic-feasibility} has equal nonnegative objectives,
\[
\mathsf B_0(E_0)=N\mathsf A(E_0)=\mathsf A(J)\geq0,
\]
and satisfies \(\mathsf A(w)\geq0\) for every \(w\in W\) that is positive as an operator on \(A\).
\end{proposition}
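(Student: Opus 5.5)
Four things need checking: the cone structure, the nonnegativity of actual distributions, the equality of the two objectives, and the positivity of \(\mathsf A\) on positive operators in \(W\). Each follows from the definitions together with Theorem~\ref{thm:macwilliams}. The ideals \(V_a\) and \(U_i\) are closed under their respective involutions, since \(z_a\) and \(e_i\) are self-adjoint central projections. Hence the self-adjoint parts \(\{x\in V_a:x^\ast=x\}\) and \(\{u\in U_i:u^\dagger=u\}\) are real subspaces.

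\textbf{Cones.} Within the self-adjoint subspaces, \(R_a^+\) and \(K_i^+\) are intersections of closed half-spaces. Each condition \(\langle\chi,x(\chi)\rangle\ge0\) is real-linear and continuous in \(x\); it is real-valued because of the computation in Proposition~\ref{lem:distribution-normalization} applied to \(x=x^\ast\) and \(\chi=\chi^\ast\). So both sets are closed convex cones. Moreover, a \(\dagger\)-self-adjoint \(u\) with \(\langle\xi,u(\xi)\rangle\ge0\) for all \(\xi\in A\) is precisely a positive operator on the Hilbert space \(A\), which identifies \(K_i^+\).

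\textbf{Actual distributions.} For \(\chi\in A_+\setminus\{0\}\) and \(x\in R_a^+\), the value \(A_a^\chi(x)\) is \(\langle\chi,x(\chi)\rangle/h_A(\chi^2)\ge0\) by the definition of \(R_a^+\). For \(u\in K_i^+\), the value \(B_i^\chi(u)\) is \(N\langle\chi,u(\chi)\rangle/h_A(\chi^2)\ge0\), because \(\chi\in A\).

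\textbf{Objectives.} Take a feasible pair. By \(\mathsf B=Q'\mathsf A\) and the formula \((Q'\mathsf A)_i(u)=N\mathsf A(u)\) for \(u\in U_0\), we get \(\mathsf B_0(E_0)=N\mathsf A(E_0)=\mathsf A(J)\), since \(E_0=N^{-1}J\). Nonnegativity holds because \(E_0\) is a positive operator in \(U_0\), so \(E_0\in K_0^+\) and \(\mathsf B_0(E_0)\ge0\).

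\textbf{Positivity of \(\mathsf A\), the main point.} Let \(w\in W\) be positive as an operator. Since \(P\) is the identity map of \(W\), we can decompose \(w=\sum_i e_i\circ w\). Because \(e_i\) is a central projection for composition, \(e_i\circ w=e_i\circ w\circ e_i\) is again positive, hence lies in \(K_i^+\). Then, using \(\mathsf A=N^{-1}P'\mathsf B\) from Theorem~\ref{thm:macwilliams} and linearity,
\[
\mathsf A(w)=\frac1N\sum_{i\in\mathcal S}\mathsf B_i(e_i\circ w)\ge0.
\]

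The one step requiring care is the identity \(\mathsf A(w)=N^{-1}\sum_i\mathsf B_i(e_i\circ w)\). Concretely, one must check that \((P'\mathsf B)_a(z_a\bullet w)\), summed over \(a\), equals \(\sum_i\mathsf B_i(e_i\circ w)\). This follows from the bilinear relation \(\sum_a e_i\circ(z_a\bullet w)=e_i\circ w\). Alternatively, substitute \(\mathsf B_i=N\mathsf A|_{U_i}\) directly and use \(\sum_i e_i=I\), which gives \(\sum_i\mathsf A(e_i\circ w)=\mathsf A(w)\).
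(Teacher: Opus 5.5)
Your proposal is correct and follows the paper's proof essentially step by step. The cones are intersections of homogeneous closed half-spaces; nonnegativity of the actual distributions comes straight from the definitions; the objectives agree via \(\mathsf B=Q'\mathsf A\) and \(E_0=N^{-1}J\), with \(E_0\in K_0^+\); and positivity of \(\mathsf A\) follows from \(e_i\circ w\in K_i^+\) together with \(\mathsf A(w)=N^{-1}\sum_i\mathsf B_i(e_i\circ w)\). Your explicit justification of that last identity, via \(\mathsf B_i=N\mathsf A|_{U_i}\) and \(\sum_i e_i=I\), is valid.
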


\begin{proof}
In their self-adjoint real subspaces, the cones are intersections of closed homogeneous half-spaces.
The quadratic evaluations defining \(R_a^+\) are real by traciality, as in the proof of Proposition~\ref{lem:distribution-normalization}.
The description of \(K_i^+\) is its defining quadratic-form condition.
Applying the defining inequalities to \(\chi\) in \eqref{eq:inner-distribution} and \eqref{eq:dual-distribution} and using \(h_A(\chi^2)>0\) gives nonnegativity of the distributions.
For a feasible pair, \(\mathsf B=Q'\mathsf A\) and \(E_0=N^{-1}J\) give the objective identities.
Their common value is nonnegative because \(E_0\in K_0^+\).
It remains to show positivity of \(\mathsf A\) on positive operators.
If \(w\in W\) is positive on \(A\), each central orthogonal projection \(e_i\) commutes with \(w\), so \(e_i\circ w\in K_i^+\).
The inverse MacWilliams relation therefore gives
\[
\mathsf A(w)=\frac1N\sum_{i\in\mathcal S}\mathsf B_i(e_i\circ w)\geq0.
\]
\end{proof}

For actual distributions, both objectives equal effective size by Proposition~\ref{lem:distribution-normalization}.
We maximize this value for codes and minimize it for designs.

\begin{definition}
For \(0\in\Delta\subseteq\mathcal C\), let \(\DelsCode(W,\Delta)\) be the supremum of
\begin{equation}\label{def:code-conic-bound}
\begin{array}{ll}
\text{maximize} & \mathsf A(J),\\
\text{subject to} & \text{the conditions in }\eqref{eq:conic-feasibility},\\
& \mathsf A_a=0\quad(a\notin\Delta).
\end{array}
\end{equation}
For \(T\subseteq\mathcal S\setminus\{0\}\), let \(\DelsDes(W,T)\) be the infimum of
\begin{equation}\label{def:design-conic-bound}
\begin{array}{ll}
\text{minimize} & \mathsf B_0(E_0),\\
\text{subject to} & \text{the conditions in }\eqref{eq:conic-feasibility},\\
& \mathsf B_i=0\quad(i\in T).
\end{array}
\end{equation}
The variables are \((\mathsf A,\mathsf B)\), and we use \(\sup\varnothing=-\infty\) and \(\inf\varnothing=+\infty\).
\end{definition}

\begin{theorem}\label{thm:conic-bounds}
Let \(0\in\Delta\subseteq\mathcal C\) and \(T\subseteq\mathcal S\setminus\{0\}\).
Every \(\Delta\)-code \(\chi\in A_+\setminus\{0\}\) satisfies
\[
\esize(\chi)\leq\DelsCode(W,\Delta),
\]
and every \(T\)-design \(\chi\in A_+\setminus\{0\}\) satisfies
\[
\esize(\chi)\geq\DelsDes(W,T).
\]
\end{theorem}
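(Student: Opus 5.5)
The plan is to exhibit, for each code or design \(\chi\), a feasible point of the corresponding conic program whose objective value equals \(\esize(\chi)\). Then the supremum and infimum bounds follow at once. The natural candidate is the pair of actual distributions \((\mathsf A,\mathsf B):=(A^\chi,B^\chi)\) from Definition~\ref{def:inner-distribution}.

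First I will verify each condition in \eqref{eq:conic-feasibility} for this pair, using earlier results:
\begin{itemize}
\item the relation \(\mathsf B=Q'\mathsf A\) is the MacWilliams identity of Theorem~\ref{thm:macwilliams};
\item the normalization \(A_0^\chi(I)=1\) and the two involution-compatibility identities are in Proposition~\ref{lem:distribution-normalization};
\item the nonnegativity \(A_a^\chi(x)\ge0\) for \(x\in R_a^+\) and \(B_i^\chi(u)\ge0\) for \(u\in K_i^+\) is Proposition~\ref{prop:distribution-positivity}, which applies because \(\chi\in A_+\setminus\{0\}\).
\end{itemize}
So \((A^\chi,B^\chi)\) satisfies \eqref{eq:conic-feasibility} for every nonzero positive \(\chi\).

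Next I will add the extra constraints. If \(\chi\) is a \(\Delta\)-code, Definition~\ref{def:qas-code} says precisely that \(A_a^\chi=0\) for \(a\notin\Delta\), so the pair is feasible for \eqref{def:code-conic-bound}. If \(\chi\) is a \(T\)-design, then \(B_i^\chi=0\) for \(i\in T\), so the pair is feasible for \eqref{def:design-conic-bound}.

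Finally I will compute the objectives, again by Proposition~\ref{lem:distribution-normalization}:
\[
A^\chi(J)=\esize(\chi)=B_0^\chi(E_0).
\]
A feasible value is at most the supremum, which gives \(\esize(\chi)\le\DelsCode(W,\Delta)\). It is also at least the infimum, which gives \(\esize(\chi)\ge\DelsDes(W,T)\). Feasibility also shows that the programs are nonempty, so the conventions \(\sup\varnothing=-\infty\) and \(\inf\varnothing=+\infty\) never come into play.

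There is no serious obstacle, since all the ingredients are already proved. The only point needing care is bookkeeping. The variables \(\mathsf A_a\) are functionals on \(V_a\), and \(\mathsf A(J)\) means \(\sum_a\mathsf A_a(z_a\bullet J)=\sum_a\mathsf A_a(z_a)\). I must check that this agrees with the expression \(\langle\chi,J(\chi)\rangle/h_A(\chi^2)\) used in Proposition~\ref{lem:distribution-normalization}. It does, because \(\sum_a z_a\bullet w=J\bullet w=w\) and the \(A_a^\chi\) are restrictions of the single functional \(A^\chi\).
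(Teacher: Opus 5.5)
Your proposal is correct and follows the paper's proof: both take the actual distributions \((A^\chi,B^\chi)\) as a feasible point, verify \eqref{eq:conic-feasibility} using Proposition~\ref{lem:distribution-normalization}, Theorem~\ref{thm:macwilliams}, and Proposition~\ref{prop:distribution-positivity}, and then compare the objective value \(\esize(\chi)\) with the supremum or infimum. Your extra check that \(\mathsf A(J)=\sum_a\mathsf A_a(z_a)\) agrees with \(\langle\chi,J(\chi)\rangle/h_A(\chi^2)\) makes explicit a step the paper handles when it defines \(\mathsf A(w)\) and notes \(A^\chi(w)=\langle\chi,w(\chi)\rangle/h_A(\chi^2)\).
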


\begin{proof}
It suffices to use the actual distributions as feasible points.
Proposition~\ref{lem:distribution-normalization}, Theorem~\ref{thm:macwilliams}, and Proposition~\ref{prop:distribution-positivity} show that \((A^\chi,B^\chi)\) satisfies \eqref{eq:conic-feasibility}.
If \(\chi\) is a \(\Delta\)-code, its distribution also satisfies the additional constraints in \eqref{def:code-conic-bound}; its objective value \(\esize(\chi)\) is therefore at most the supremum.
If \(\chi\) is a \(T\)-design, the same pair satisfies \eqref{def:design-conic-bound}, and its objective value is at least the infimum.
\end{proof}

A feasible pair need not arise from a positive element, so a code or design need not attain the bound.
When \(\dim U_0>1\), the objective \(\mathsf B_0(E_0)\) specifies only one evaluation of \(\mathsf B_0\).

We next identify the cones in the classical setting.
For homogeneous coherent configurations, the ideals for the Schur product are one-dimensional, while those for composition can be matrix algebras.
The inner variables are then scalars and the dual variables are positive semidefinite matrices.

\begin{proposition}\label{prop:classical-noncommutative-specialization}
Let \(W\) be the adjacency algebra of a homogeneous coherent configuration on a finite set \(X\), with adjacency basis \(\{B_a\}_{a\in\mathcal C}\).
Regard \(W\) as a QAS on \(A=\C^X\), with the simple-ideal decompositions above.
Then
\[
V_a=\C B_a,\qquad R_a^+=\R_{\geq0}B_a.
\]
For a nonempty \(Y\subseteq X\) with characteristic vector \(\chi_Y\),
\[
A_a^{\chi_Y}(B_a)=\frac1{|Y|}\chi_Y^\dagger B_a\chi_Y,\qquad
B_i^{\chi_Y}(u)=\frac{|X|}{|Y|}\chi_Y^\dagger u\chi_Y
\quad(u\in U_i),
\]
and
\[
\esize(\chi_Y)=|Y|,\qquad
B_i^{\chi_Y}=0\quad\Longleftrightarrow\quad e_i(\chi_Y)=0.
\]
After identifying \((U_i,\circ,\dagger)\) with \(M_r(\C)\) by a \(C^\ast\)-algebra isomorphism, its complex linear functionals nonnegative on \(K_i^+\) are precisely
\[
u\longmapsto\operatorname{Tr}(Du),\qquad D\in M_r(\C),\quad D\succeq0.
\]
The code and design bounds therefore become semidefinite programs with scalar variables for the \(V_a\) and positive semidefinite matrix variables for the \(U_i\).
\end{proposition}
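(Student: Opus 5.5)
We treat the Schur-product side and the composition side separately, using only the definitions of Section~\ref{sec:delsarte-theory}.

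For the Schur side, recall that for \(A=\C^X\) the quantum Schur product is the entrywise product and \(\ast\) is entrywise conjugation. The adjacency basis of a homogeneous coherent configuration consists of \(0,1\)-matrices satisfying \(B_a\bullet B_b=\delta_{ab}B_a\), \(B_a^\ast=B_a\) and \(\sum_aB_a=J\). Hence \((W,\bullet,\ast)\cong\C^{\mathcal C}\) is commutative, with minimal central projections \(z_a=B_a\) and one-dimensional simple ideals \(V_a=\C B_a\). Homogeneity gives \(B_0=I\), matching the convention \(I\in V_0\).

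An element \(x=\lambda B_a\) is \(\ast\)-self-adjoint exactly when \(\lambda\in\R\). For \(\chi\in A_+\), which is a nonnegative function, the value \(\langle\chi,B_a\chi\rangle=\chi^\dagger B_a\chi\) is nonnegative. It is strictly positive for \(\chi\) the characteristic vector of a pair \(\{x,y\}\), where \((x,y)\) lies in relation \(a\); this is possible since \(B_a\ne0\). Therefore \(\lambda\ge0\), and \(R_a^+=\R_{\ge0}B_a\).

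The distribution formulas follow by substitution. With \(h_A\) the counting sum, we have \(h_A(\chi_Y^2)=|Y|\), \(N=|X|\), and \(\langle\chi_Y,u\chi_Y\rangle=\chi_Y^\dagger u\chi_Y\). Since \(\chi_Y\) is a projection in \(A\), Proposition~\ref{lem:distribution-normalization} gives \(\esize(\chi_Y)=h_A(\chi_Y)=|Y|\). The equivalence \(B_i^{\chi_Y}=0\Leftrightarrow e_i(\chi_Y)=0\) is Lemma~\ref{lem:spectral-vanishing}.

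For the composition side, identify \((U_i,\circ,\dagger)\) with \(M_r(\C)\) by a \(\ast\)-isomorphism \(\phi\). The key point is that positivity as an operator on the Hilbert space \(A\) coincides with positivity in the \(C^\ast\)-algebra \(U_i\). By Proposition~\ref{prop:distribution-positivity}, \(K_i^+\) is the set of positive operators in \(U_i\). An element of a \(C^\ast\)-subalgebra of \(\End(A)\) is a positive operator iff it has the form \(v^\dagger v\) with \(v\) in that subalgebra, taking \(v\) to be the square root computed by functional calculus inside \(U_i\). Since \(\ast\)-isomorphisms preserve positivity, \(\phi(K_i^+)\) is exactly the positive semidefinite cone of \(M_r(\C)\).

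Every complex linear functional on \(M_r(\C)\) has the form \(u\mapsto\operatorname{Tr}(Du)\) for a unique \(D\). It is nonnegative on the positive semidefinite cone iff \(D\succeq0\). This is the self-duality of that cone: test on rank-one projections \(vv^\dagger\) to get \(v^\dagger Dv\ge0\), and the converse follows from the spectral decomposition. Such a functional automatically satisfies \(\mathsf B_i(u^\dagger)=\overline{\mathsf B_i(u)}\), because \(D\) is Hermitian.

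Substituting these descriptions into \eqref{def:code-conic-bound} and \eqref{def:design-conic-bound} gives the final claim. Each \(\mathsf A_a\) becomes a real scalar \(\mathsf A_a(B_a)\ge0\); the involution condition forces it to be real. Each \(\mathsf B_i\) becomes a matrix \(D_i\succeq0\). The MacWilliams constraint is then linear, so the bounds are semidefinite programs.

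The main obstacle is justifying that operator positivity on \(A\) agrees with intrinsic positivity in \(U_i\) transported through \(\phi\). This needs care only because \(\phi\) is abstract and \(U_i\) is a non-unital corner of \(\End(A)\) with unit \(e_i\). I would handle it through the functional-calculus square root inside \(U_i\), as above.
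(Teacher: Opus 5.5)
Your proposal is correct and follows essentially the same route as the paper: one-dimensional Schur ideals with \(R_a^+=\R_{\ge0}B_a\); direct substitution together with Proposition~\ref{lem:distribution-normalization} and Lemma~\ref{lem:spectral-vanishing} for the distribution statements; and identification of \(K_i^+\) with the positive semidefinite cone, followed by its self-duality. The only differences are minor: you test \(\lambda B_a\) on the characteristic vector of a pair in relation \(a\) rather than on the constant function \(1\), and you spell out, via the square root inside \(U_i\), why operator positivity on \(A\) agrees with positivity in \(U_i\), which the paper asserts in one line.
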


\begin{proof}
We first identify the ideals and cones for the Schur product.
The adjacency algebra is a cocommutative QAS~\cite{Higman1970,Gromada2024}.
The \(B_a\) are minimal projections for the entrywise Schur product, so \(V_a=\C B_a\).
Their entries are nonnegative, giving \(\R_{\geq0}B_a\subseteq R_a^+\).
Conversely, if \(cB_a\in R_a^+\), self-adjointness for \(\ast\) gives \(c\in\R\), and testing on the constant function with value \(1\) gives
\[
c\sum_{x,y\in X}(B_a)_{xy}\geq0.
\]
The sum is positive because \(B_a\) is a nonzero \(0,1\)-matrix; hence \(c\geq0\).

Substituting \(N=|X|\) and \(h_A(\chi_Y^2)=|Y|\) in \eqref{eq:inner-distribution}--\eqref{eq:dual-distribution} gives the distribution formulas.
Proposition~\ref{lem:distribution-normalization} gives \(\esize(\chi_Y)=h_A(\chi_Y)=|Y|\), and Lemma~\ref{lem:spectral-vanishing} gives the vanishing criterion.
For composition, the \(C^\ast\)-algebra isomorphism identifies \(K_i^+\) with the positive semidefinite cone in \(M_r(\C)\).
Self-duality under the trace pairing gives the stated representation of its positive functionals~\cite[Section~2.4]{BoydVandenberghe2004}.
The cone constraints are therefore scalar nonnegativity and positive semidefiniteness; all other constraints and both objectives are linear.
\end{proof}

For symmetric association schemes, both sets of ideals are one-dimensional, and we recover the programs of Section~\ref{sec:classical-delsarte}.

\begin{corollary}\label{cor:classical-programs}
For the Bose--Mesner algebra of a symmetric association scheme, the conic programs~\eqref{def:code-conic-bound} and~\eqref{def:design-conic-bound} reduce to the linear programs~\eqref{def:classical-code-program} and~\eqref{def:classical-design-program}, respectively.
\end{corollary}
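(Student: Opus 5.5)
We specialize Proposition~\ref{prop:classical-noncommutative-specialization} to a symmetric association scheme, where both ideal decompositions consist of one-dimensional ideals: \(V_a=\C B_a\) and \(U_i=\C E_i\) for \(0\le a,i\le d\). The plan is to identify each functional variable with a single real scalar, namely \(a_i:=\mathsf A_i(B_i)\) and \(b_j:=\mathsf B_j(E_j)\). We then check that every constraint in \eqref{eq:conic-feasibility}, together with the code or design constraints, becomes the corresponding constraint of \eqref{def:classical-code-program} or \eqref{def:classical-design-program}. Finally, we verify that the objectives match.

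\textbf{Variables and involution constraints.} A complex linear functional on \(\C B_i\) is determined by its value at \(B_i\). The involution condition \(\mathsf A_i(B_i^\ast)=\overline{\mathsf A_i(B_i)}\), with \(B_i^\ast=B_i\) (a real \(0,1\)-matrix), forces \(a_i\in\R\). Likewise \(E_j^\dagger=E_j\) forces \(b_j\in\R\).

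\textbf{Cone constraints.} By Proposition~\ref{prop:classical-noncommutative-specialization}, \(R_i^+=\R_{\ge0}B_i\), so the inner cone condition is exactly \(a_i\ge0\). For the dual cone, \(K_j^+=\R_{\ge0}E_j\): a real multiple \(cE_j\) is positive iff \(c\ge0\), since \(E_j\) is a nonzero orthogonal projection. Hence the dual cone condition is exactly \(b_j\ge0\).

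\textbf{MacWilliams relation.} By \eqref{eq:classical-eigenmatrices}, \(Nz_a\bullet E_j=Q_{aj}B_a\). Then \(\mathsf B=Q'\mathsf A\) reads \(b_j=\sum_a Q_{aj}a_a\). Equivalently, via \(\mathsf A=N^{-1}P'\mathsf B\) and \(e_j\circ B_a=P_{ja}E_j\), it reads \(a_a=v^{-1}\sum_jP_{ja}b_j\), where \(N=v\).

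\textbf{Normalization and the code program.} Normalization is \(\mathsf A_0(I)=a_0=1\), since \(B_0=I\). The code constraint \(\mathsf A_a=0\) for \(a\notin\Delta\) is \(a_a=0\). The objective is \(\mathsf A(J)=\sum_a\mathsf A_a(B_a)=\sum_aa_a\), using \(z_a\bullet J=B_a\). Eliminating \(b\) through the MacWilliams relation, the program becomes \eqref{def:classical-code-program}, whose dual constraints are \(\sum_iQ_{ij}a_i\ge0\).

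\textbf{The design program.} The objective is \(\mathsf B_0(E_0)=b_0\), and the design constraint is \(b_j=0\) for \(j\in T\). Eliminating \(a\), the constraints \(a_i\ge0\) become \(v^{-1}\sum_jP_{ji}b_j\ge0\). Normalization becomes \(v^{-1}\sum_jb_j=1\), by the identity \(\frac1N\sum_i\mathsf B_i(e_i)=1\) with \(e_j=E_j\). This is exactly \eqref{def:classical-design-program}.

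The reduction must be a bijection of feasible sets, not just an implication. Given a real feasible \(a\) for the LP, we define \(\mathsf A_i\) by \(\mathsf A_i(B_i)=a_i\), extended complex linearly, and set \(\mathsf B=Q'\mathsf A\); the checks above reverse. The only mildly delicate steps are confirming \(R_i^+\) and \(K_j^+\) as rays and tracking the factor \(N=v\) in \(Q_{ai}\) against the classical convention \(E_j=v^{-1}\sum Q_{ij}B_i\). I expect that bookkeeping to be the main point requiring care.
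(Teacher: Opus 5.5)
Your proposal is correct and follows essentially the same route as the paper's proof. Both use Proposition~\ref{prop:classical-noncommutative-specialization} together with \(U_i=\C E_i\) to reduce the functionals to real scalars \(a_i=\mathsf A_i(B_i)\) and \(b_j=\mathsf B_j(E_j)\), identify the cones with the rays \(\R_{\ge0}B_i\) and \(\R_{\ge0}E_j\), evaluate the MacWilliams relation at \(E_j\) and \(B_i\) via \eqref{eq:classical-eigenmatrices}, and then read off the normalization, objectives, and vanishing constraints; your explicit check that \(Nz_a\bullet E_j=Q_{aj}B_a\) and your remark that the feasible sets correspond bijectively spell out steps the paper leaves implicit.
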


\begin{proof}
Proposition~\ref{prop:classical-noncommutative-specialization} gives \(V_a=\C B_a\) and \(R_a^+=\R_{\geq0}B_a\).
Symmetry also gives \(U_i=\C E_i\) and \(K_i^+=\R_{\geq0}E_i\).
Thus the functionals are determined by real scalars \(a_i:=\mathsf A_i(B_i)\) and \(b_j:=\mathsf B_j(E_j)\), and the cone constraints are \(a_i,b_j\geq0\).
Evaluating the MacWilliams relation on \(E_j\) and \(B_i\), using \eqref{eq:classical-eigenmatrices}, gives the eigenmatrix relations in Proposition~\ref{prop:classical-distributions}.
Since \(I=B_0=\sum_j E_j\), the common normalization is \(a_0=1\), equivalently \(v^{-1}\sum_jb_j=1\).
The objectives are \(\mathsf A(J)=\sum_i a_i\) and \(\mathsf B_0(E_0)=b_0\), and the respective code and design conditions set exactly the prescribed components to zero.
\end{proof}

For a general QAS, semidefinite representations of the cones \(R_a^+\) need not be known.
The bounds are finite-dimensional conic programs; they are semidefinite programs when the required cone constraints admit such representations~\cite[Chapters~2 and~5]{BoydVandenberghe2004}.
\section{Moore graphs as codes and designs}\label{sec:moore}

A finite simple \(k\)-regular graph of diameter two on \(k^2+1\) vertices is a Moore graph~\cite{HoffmanSingleton1960}.
We represent each vertex and its neighborhood by \((x,B)\).
The graph conditions then become code and design conditions in a QAS whose composition algebra contains \(M_2(\C)\).

Let a finite group \(G\) act transitively on \(\Omega\), with
\[
(g\cdot f)(\omega):=f(g^{-1}\omega)
\qquad(f\in\C^\Omega).
\]
Define
\[
\End_G(\C^\Omega):=
\left\{S\in\End_{\C}(\C^\Omega):
\begin{array}{l}
S(g\cdot f)=g\cdot S(f)
\\
(g\in G,\ f\in\C^\Omega)
\end{array}
\right\}.
\]
For each \(G\)-orbit \(R\subseteq\Omega^2\), its adjacency matrix is
\[
(B_R)_{\omega,\eta}:=
\begin{cases}
1,&(\omega,\eta)\in R,\\
0,&(\omega,\eta)\notin R.
\end{cases}
\]
Commuting with the action is equivalent to having entries constant on each orbit, so
\[
W:=\End_G(\C^\Omega)
=\operatorname{span}_{\C}\{B_R:R\text{ is a }G\text{-orbit on }\Omega^2\}.
\]
These orbits form a homogeneous coherent configuration~\cite{Higman1970}, and \(W\) is a cocommutative QAS~\cite[Theorem~2.9]{Gromada2024}.
If
\[
\C^\Omega\cong\bigoplus_\lambda V_\lambda\otimes\C^{m_\lambda},
\]
where the \(V_\lambda\) are pairwise nonisomorphic irreducible \(G\)-modules, Schur's lemma gives~\cite{Serre1977}
\begin{equation}\label{eq:schurian-commutant}
(W,\circ,\dagger)\cong\bigoplus_\lambda M_{m_\lambda}(\C).
\end{equation}
Thus \(W\) is noncommutative for composition if and only if some \(m_\lambda\geq2\).
Let \(U_i\) (\(i\in\mathcal S\)) be its simple ideals for composition, with central orthogonal projections \(e_i\), indexed so that \(E_0\in U_0\).
Transitivity gives multiplicity one for the trivial representation, so
\[
U_0=\C E_0,\qquad e_0=E_0.
\]
For \(Y\subseteq\Omega\), put
\[
\chi_Y(\omega):=
\begin{cases}
1,&\omega\in Y,\\
0,&\omega\notin Y,
\end{cases}
\qquad 1:=\chi_\Omega.
\]

The following proposition uses orthogonality to bound design size and determine intersection sizes.

\begin{proposition}\label{prop:schurian-design-orthogonality-bound}
Suppose that \(T\subseteq\mathcal S\setminus\{0\}\) and \(\varnothing\ne Z\subseteq\Omega\) satisfy
\[
e_i\chi_Z=0\qquad(i\notin T\cup\{0\}).
\]
Then
\[
\DelsDes(W,T)\geq\frac{|\Omega|}{|Z|}.
\]
Every \(T\)-design \(Y\subseteq\Omega\) satisfies
\[
|Y|\geq\frac{|\Omega|}{|Z|},\qquad
|Y\cap gZ|=\frac{|Y||Z|}{|\Omega|}\quad(g\in G),
\]
and hence \(|\Omega|\mid |Y||Z|\).
\end{proposition}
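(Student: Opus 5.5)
The plan is to test any feasible dual variable against the \(G\)-invariant positive operator built from \(Z\), and to use the inverse MacWilliams relation to turn this into an inequality on the objective. Put \(\psi:=\chi_Z-\frac{|Z|}{|\Omega|}1\) and
\[
w:=\sum_{g\in G}(g\cdot\chi_Z)(g\cdot\chi_Z)^\dagger ,
\]
which is \(G\)-equivariant, hence \(w\in W\), and positive. Since \(\C^\Omega=\bigoplus_i e_i\C^\Omega\) and each \(e_i\) commutes with the action, the hypothesis \(e_i\chi_Z=0\) for \(i\notin T\cup\{0\}\) gives \(e_i(g\cdot\chi_Z)=g\cdot e_i\chi_Z=0\). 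Thus \(e_i\circ w=0\) for \(i\notin T\cup\{0\}\), and \(w\) lives on the components \(0\) and \(T\).

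For the conic bound, take a feasible pair \((\mathsf A,\mathsf B)\) with \(\mathsf B_i=0\) for \(i\in T\). The inverse MacWilliams relation gives
\[
\mathsf A(w)=\frac1N\sum_i\mathsf B_i(e_i\circ w)=\frac1N\mathsf B_0(e_0\circ w).
\]
Because \(U_0=\C E_0\), we have \(e_0\circ w=cE_0\) with \(c=\langle 1,w1\rangle/|\Omega|=|G||Z|^2/|\Omega|\). Hence \(\mathsf A(w)=\frac{|G||Z|^2}{|\Omega|^2}\,\mathsf B_0(E_0)\).

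For the lower estimate of \(\mathsf A(w)\), decompose \(w\) in the Schur basis as \(w=\sum_a c_aB_a\). Then \(c_a\ge 0\), since \(w\) has nonnegative entries, and \(c_0\) is the diagonal entry \(\sum_g (g\chi_Z)(\omega)^2=|G||Z|/|\Omega|\) by transitivity. Proposition~\ref{prop:classical-noncommutative-specialization} gives \(R_a^+=\R_{\ge0}B_a\), so \(\mathsf A_a(B_a)\ge0\), and \(\mathsf A_0(I)=1\). Therefore \(\mathsf A(w)\ge c_0=|G||Z|/|\Omega|\), which yields \(\mathsf B_0(E_0)\ge|\Omega|/|Z|\).

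For an actual design \(Y\), Theorem~\ref{thm:conic-bounds} and \(\esize(\chi_Y)=|Y|\) give \(|Y|\ge|\Omega|/|Z|\). For the intersection formula, write \(\chi_Y=\frac{|Y|}{|\Omega|}1+\phi\), with \(\phi\) in \(\bigoplus_{i\ne0}e_i\C^\Omega\). By Lemma~\ref{lem:spectral-vanishing}, \(e_i\chi_Y=0\) for \(i\in T\), so \(\phi\in\bigoplus_{i\notin T\cup\{0\}}e_i\C^\Omega\). Meanwhile \(g\cdot\chi_Z-\frac{|Z|}{|\Omega|}1\) lies in \(\bigoplus_{i\in T}e_i\C^\Omega\). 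The summands for distinct \(i\) are orthogonal because the \(e_i\) are orthogonal projections, so these two vectors are orthogonal. Hence
\[
|Y\cap gZ|=\langle\chi_Y,g\chi_Z\rangle=\frac{|Y||Z|}{|\Omega|},
\]
and integrality gives the divisibility \(|\Omega|\mid|Y||Z|\).

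The main care point is the lower estimate \(\mathsf A(w)\ge c_0\). It relies on identifying \(R_a^+\) with the rays \(\R_{\ge0}B_a\) and on the coefficient bookkeeping; everything else is routine.
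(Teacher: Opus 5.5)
Your proof is correct and follows the paper's argument. Your \(w\) is \(\frac{|G||Z|}{|\Omega|}\) times the paper's averaged operator \(M\). Your chain \(\mathsf A(w)\ge c_0\,\mathsf A_0(I)\), combined with the inverse MacWilliams evaluation \(\mathsf A(w)=N^{-1}\mathsf B_0(e_0\circ w)\), is the paper's \(1=\mathsf A(I)\le\mathsf A(M)=\frac{|Z|}{|\Omega|}\mathsf B_0(E_0)\) after rescaling, and your intersection formula uses the same orthogonal decomposition by the \(e_i\). For the lower estimate you only need the inclusion \(\R_{\ge0}B_a\subseteq R_a^+\), not the full equality of cones.
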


\begin{proof}
To bound the design program, average the rank-one operators associated with the translates of \(Z\):
\[
M:=\frac{|\Omega|}{|Z||G|}
\sum_{g\in G}\chi_{gZ}\chi_{gZ}^{\dagger}\in W.
\]
By transitivity, each point belongs to \(|G||Z|/|\Omega|\) translates, counted with multiplicity.
Thus \(M\) has diagonal entries one, nonnegative entries, and \(M1=|Z|1\).
In particular, \(M-I\) is a nonnegative linear combination of nondiagonal adjacency matrices.
Since each \(e_i\) commutes with the group action, the hypothesis also gives \(e_i\chi_{gZ}=0\) for \(i\notin T\cup\{0\}\).
Consequently,
\[
e_0\circ M=|Z|E_0,\qquad
e_i\circ M=0\quad(i\notin T\cup\{0\}).
\]
For any feasible pair \((\mathsf A,\mathsf B)\) in the design program, Proposition~\ref{prop:classical-noncommutative-specialization} gives nonnegativity of \(\mathsf A\) on adjacency matrices.
Using \(\mathsf B_i=0\) for \(i\in T\) and the inverse MacWilliams relation, we obtain
\[
1=\mathsf A(I)\leq\mathsf A(M)
=\frac1{|\Omega|}\sum_{i\in\mathcal S}\mathsf B_i(e_i\circ M)
=\frac{|Z|}{|\Omega|}\mathsf B_0(E_0).
\]
Taking the infimum proves the program bound; Theorem~\ref{thm:conic-bounds} then gives the bound on \(|Y|\).

For the intersection formula, Lemma~\ref{lem:spectral-vanishing} gives \(e_i\chi_Y=0\) for \(i\in T\), while \(e_i\chi_{gZ}=0\) for \(i\notin T\cup\{0\}\).
The orthogonal decomposition by the \(e_i\) therefore gives
\[
|Y\cap gZ|
=\langle\chi_Y,\chi_{gZ}\rangle
=\langle E_0\chi_Y,E_0\chi_{gZ}\rangle
=\frac{|Y||Z|}{|\Omega|}.
\]
The left-hand side is an integer, proving divisibility.
This intersection formula also appears in~\cite[Definition~5.3 and Proposition~5.5]{BambergLansdown2026}.
\end{proof}

Let \(k\geq2\) be an integer, put \(v:=k^2+1\), and let \(X\) be a set of size \(v\).
The group \(G:=\operatorname{Sym}(X)\) acts transitively on
\[
\Omega:=\{(x,B):x\in X,\ B\subseteq X\setminus\{x\},\ |B|=k\}
\]
by \(g\cdot(x,B):=(g(x),g(B))\).
Set \(W:=\End_G(\C^\Omega)\).
For this \(W\), retain the notation \(U_i,e_i\) (\(i\in\mathcal S\)) above.
To describe the occurrences of points as first coordinates and in the subsets, use the standard representation
\[
V:=\left\{(a_x)_{x\in X}\in\C^X:\sum_{x\in X}a_x=0\right\}
\]
and the \(G\)-equivariant maps \(\Phi,\Psi:V\to\C^\Omega\) defined by
\[
\Phi(a)(x,B):=a_x,\qquad
\Psi(a)(x,B):=\sum_{z\in B}a_z.
\]

\begin{proposition}\label{prop:moore-standard-block}
The multiplicity of \(V\) in \(\C^\Omega\) is two, and the sum of all submodules isomorphic to \(V\) is \(\Phi(V)\oplus\Psi(V)\).
Put
\[
T:=\{i\in\mathcal S\setminus\{0\}:
\operatorname{im}e_i=\Phi(V)\oplus\Psi(V)\}.
\]
Then
\[
|T|=1,\qquad U_i\cong M_2(\C)\quad(i\in T).
\]
For every nonempty \(Y\subseteq\Omega\),
\[
\begin{aligned}
Y\text{ is a }T\text{-design}
&\quad\Longleftrightarrow\quad
\chi_Y\in\bigl(\Phi(V)\oplus\Psi(V)\bigr)^\perp\\
&\quad\Longleftrightarrow\quad
\begin{cases}
\displaystyle |\{B:(x,B)\in Y\}|=\frac{|Y|}{v},\\[5pt]
\displaystyle |\{(y,B)\in Y:x\in B\}|=\frac{k|Y|}{v}
\end{cases}
\quad(\forall x\in X).
\end{aligned}
\]
\end{proposition}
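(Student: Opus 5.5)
The plan is to compute the $V$-isotypic component of $\C^\Omega$ by Frobenius reciprocity, to show that $\Phi$ and $\Psi$ span the corresponding multiplicity space, and then to translate the design condition through Lemma~\ref{lem:spectral-vanishing} into orthogonality conditions that can be read off coordinatewise.

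\textbf{Multiplicity two.} Since $G$ acts transitively on $\Omega$, we have $\C^\Omega\cong\operatorname{Ind}_H^G\mathbf 1$, where $H$ is the stabilizer of a base point $(x_0,B_0)$. Thus
\[
H\cong\operatorname{Sym}(\{x_0\})\times\operatorname{Sym}(B_0)\times\operatorname{Sym}(X\setminus(B_0\cup\{x_0\})).
\]
By Frobenius reciprocity, the multiplicity of $V$ equals $\dim V^H$. The $H$-fixed vectors in $\C^X$ are the functions constant on the $H$-orbits $\{x_0\}$, $B_0$ and $X\setminus(B_0\cup\{x_0\})$. All three orbits are nonempty, because $v-1-k=k(k-1)\ge 2$. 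Intersecting with the sum-zero hyperplane therefore leaves dimension $3-1=2$.

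\textbf{The isotypic component.} Both $\Phi$ and $\Psi$ are $G$-equivariant, so they lie in $\operatorname{Hom}_G(V,\C^\Omega)$, which is two-dimensional by the previous step. I will show they are linearly independent. Take $a=\delta_y-\delta_z$ with $y\ne z$. Then $\Phi(a)$ depends only on the first coordinate, whereas $\Psi(a)(x,B)$ varies with $B$ for fixed $x$; for instance, choose $B$ containing $y$ but not $z$, and compare with $B$ avoiding both. Hence $\Psi\ne c\Phi$ for every scalar $c$. Since $V$ is irreducible, every nonzero equivariant map from $V$ is injective. Two linearly independent maps in the two-dimensional multiplicity space have images summing directly to the whole $V$-isotypic component, and that component is the sum of all submodules isomorphic to $V$. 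By \eqref{eq:schurian-commutant}, the central projections of $(W,\circ,\dagger)$ are exactly the orthogonal projections onto isotypic components. Exactly one such $e_i$ has image $\Phi(V)\oplus\Psi(V)$, and its ideal satisfies $U_i\cong M_{2}(\C)$. This index is nonzero because $V$ is nontrivial, so $|T|=1$.

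\textbf{The design characterization.} By Proposition~\ref{prop:classical-noncommutative-specialization} (equivalently Lemma~\ref{lem:spectral-vanishing}), $Y$ is a $T$-design if and only if $e_i\chi_Y=0$ for the single $i\in T$. Since $e_i$ is the orthogonal projection onto its image, this holds if and only if $\chi_Y\perp\Phi(V)\oplus\Psi(V)$.

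To make this explicit, compute for $a\in V$:
\[
\langle\Phi(a),\chi_Y\rangle=\sum_x\overline{a_x}\,n_x,
\qquad
\langle\Psi(a),\chi_Y\rangle=\sum_x\overline{a_x}\,m_x,
\]
where $n_x=|\{B:(x,B)\in Y\}|$ and $m_x=|\{(y,B)\in Y:x\in B\}|$. A vector is orthogonal to all sum-zero vectors if and only if it is constant. The totals are $\sum_x n_x=|Y|$ and $\sum_x m_x=k|Y|$, which force $n_x=|Y|/v$ and $m_x=k|Y|/v$ for every $x\in X$.

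\textbf{Main obstacle.} The step needing the most care is the independence of $\Phi$ and $\Psi$, together with identifying their images' sum as the full isotypic component. Everything else follows from Frobenius reciprocity and the earlier lemmas.
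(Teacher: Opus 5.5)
Your proposal is correct and follows the paper's proof essentially step by step: Frobenius reciprocity using the three stabilizer orbits on \(X\), non-proportionality of \(\Phi\) and \(\Psi\) tested on a difference of two delta functions, Schur's lemma with \eqref{eq:schurian-commutant} to get \(|T|=1\) and \(U_i\cong M_2(\C)\), then Lemma~\ref{lem:spectral-vanishing} and the constant-count-vector argument. The only cosmetic difference is how you witness non-proportionality: you compare two blocks with the same first coordinate, whereas the paper evaluates at a single point \((x,B)\) where \(\Phi(a)\) vanishes and \(\Psi(a)\) does not.
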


\begin{proof}
The stabilizer of \((x,B)\in\Omega\) has three orbits on \(X\):
\[
\{x\},\qquad B,\qquad X\setminus(\{x\}\cup B).
\]
Its fixed vectors in \(V\) are constant on these orbits and have coordinate sum zero, so their space has dimension two.
Frobenius reciprocity gives multiplicity two for \(V\) in \(\C^\Omega\)~\cite{Serre1977}.
The maps \(\Phi\) and \(\Psi\) are nonzero and not proportional: choose \(p\in B\), \(q\notin B\cup\{x\}\), and \(a\in V\) with \(a_p=1\), \(a_q=-1\), and all other coordinates zero.
Then
\[
\Phi(a)(x,B)=0,\qquad \Psi(a)(x,B)=1.
\]
By irreducibility of \(V\) and Schur's lemma, their images are distinct irreducible submodules with zero intersection.
They account for both copies of \(V\), and~\eqref{eq:schurian-commutant} gives \(|T|=1\) and the corresponding ideal \(M_2(\C)\).

Lemma~\ref{lem:spectral-vanishing} gives the first equivalence.
For \(a\in V\),
\[
\begin{aligned}
\langle\chi_Y,\Phi(a)\rangle
&=\sum_{x\in X}|\{B:(x,B)\in Y\}|a_x,\\
\langle\chi_Y,\Psi(a)\rangle
&=\sum_{z\in X}|\{(x,B)\in Y:z\in B\}|a_z.
\end{aligned}
\]
Each expression vanishes for every \(a\) with coordinate sum zero if and only if its count vector is constant.
Summing the two count vectors over \(X\) gives \(|Y|\) and \(k|Y|\), respectively, yielding the stated values.
\end{proof}

To impose adjacency and common-neighbor conditions, index the \(G\)-orbits on \(\Omega^2\) by \(R_a\) (\(a\in\mathcal C\)), with
\[
R_0:=\{(\omega,\omega):\omega\in\Omega\}.
\]
Let \(R\subseteq\Omega^2\) consist of the pairs \(((x,B),(y,C))\) with \(x\ne y\) satisfying
\[
\begin{aligned}
&y\in B,\quad x\in C,\quad B\cap C=\varnothing,\\
\text{or }\quad
&y\notin B,\quad x\notin C,\quad |B\cap C|=1.
\end{aligned}
\]
These cases describe adjacent vertices with no common neighbor and nonadjacent vertices with exactly one common neighbor.
Since \(R\) is \(G\)-invariant, put
\[
\Delta:=\{a\in\mathcal C:R_a\subseteq R_0\cup R\}.
\]
By Proposition~\ref{prop:classical-noncommutative-specialization}, for nonempty \(Y\subseteq\Omega\),
\[
Y\text{ is a }\Delta\text{-code}
\quad\Longleftrightarrow\quad
Y\times Y\subseteq R_0\cup R.
\]

\begin{proposition}\label{prop:moore-correspondence}
For a finite simple \(k\)-regular graph \(\Gamma\) on \(X\), let \(N_\Gamma(x)\) be the neighborhood of \(x\) and put
\[
Y_\Gamma:=\{(x,N_\Gamma(x)):x\in X\}.
\]
The map \(\Gamma\mapsto Y_\Gamma\) is a bijection between Moore graphs of degree \(k\) and diameter two on \(X\) and nonempty subsets of \(\Omega\) that are both \(\Delta\)-codes and \(T\)-designs.
Every such subset has size \(v\).
\end{proposition}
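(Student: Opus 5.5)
The plan is to prove the two directions separately, using Proposition~\ref{prop:moore-standard-block} for the design side and the orbit description of \(\Delta\) for the code side. The key observation is that a subset \(Y\subseteq\Omega\) comes from a graph precisely when its first coordinates are all distinct and cover \(X\), and the relation on neighbourhoods is symmetric. The code condition \(Y\times Y\subseteq R_0\cup R\) then encodes ``diameter two with the Moore intersection pattern'', and the design condition forces the first-coordinate and block-incidence counts to be uniform.

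\emph{Forward direction.} Let \(\Gamma\) be a Moore graph of degree \(k\). Then \(Y_\Gamma\) has size \(v\), and each \(x\) occurs exactly once as a first coordinate, which equals \(|Y|/v=1\). Each \(x\) lies in exactly \(k=k|Y|/v\) neighbourhoods by regularity. Proposition~\ref{prop:moore-standard-block} therefore gives the \(T\)-design property. For the code property, take distinct \(x,y\). If \(x\sim y\), then \(y\in N(x)\) and \(x\in N(y)\), and \(N(x)\cap N(y)=\varnothing\) because a Moore graph of diameter two has girth five, so there are no triangles. If \(x\not\sim y\), diameter two together with the absence of \(4\)-cycles (again girth five) gives \(|N(x)\cap N(y)|=1\). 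I will quote, or briefly derive from \(v=k^2+1\) by counting vertices at distance at most two from \(x\), that a \(k\)-regular diameter-two graph on \(k^2+1\) vertices has no triangles and no \(4\)-cycles. Distinct first coordinates also rule out the case \(x=y\) with \((x,B)\neq(y,C)\). Injectivity of \(\Gamma\mapsto Y_\Gamma\) is clear, since \(Y_\Gamma\) recovers all neighbourhoods.

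\emph{Reverse direction.} This is the main step. Let \(Y\) be a nonempty \(\Delta\)-code and \(T\)-design. The code condition says that distinct elements of \(Y\) have distinct first coordinates, since \(R\) requires \(x\neq y\), so the map \(Y\to X\), \((x,B)\mapsto x\), is injective. The design condition says every \(x\) occurs \(|Y|/v\) times. Since this count is at most \(1\) by injectivity and is positive because \(Y\) is nonempty, it equals \(1\), so \(|Y|=v\) and \(Y=\{(x,B_x):x\in X\}\). Define \(\Gamma\) by \(x\sim y\) if \(y\in B_x\). This relation is symmetric, because the code condition on the pair \(((x,B_x),(y,B_y))\) gives either \(y\in B_x\) and \(x\in B_y\), or \(y\notin B_x\) and \(x\notin B_y\). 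It is loopless because \(x\notin B_x\), and it is \(k\)-regular because \(|B_x|=k\). Nonadjacent distinct vertices have exactly one common neighbour, so the diameter is two; if \(k\ge 2\) the graph is not complete, since \(v>k+1\). Hence \(\Gamma\) is a Moore graph and \(Y=Y_\Gamma\).

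The only delicate point is the forward code check, which relies on the girth-five property. It follows from the counting argument \(1+k+k(k-1)\ge v\) with equality: equality forces all distance-two vertices to be reached uniquely, and it also forbids edges inside \(N(x)\), since otherwise fewer than \(k(k-1)\) new vertices would be reached. The design count in the reverse direction uses only the first condition of Proposition~\ref{prop:moore-standard-block}; the second condition is then automatic.
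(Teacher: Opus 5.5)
Your proposal is correct and follows essentially the same route as the paper's proof. In both, the code condition forces distinct first coordinates and the first count in Proposition~\ref{prop:moore-standard-block} then gives \(|Y|=v\), after which the definition of \(R\) yields a symmetric \(k\)-regular graph of diameter two. Conversely, equality in \(1+k+k(k-1)=v\) gives the Moore intersection pattern, hence the code condition, and regularity supplies the design counts.
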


\begin{proof}
Let \(Y\) be a nonempty \(\Delta\)-code and \(T\)-design.
The code condition allows at most one pair with any first coordinate, while Proposition~\ref{prop:moore-standard-block} gives
\[
0<\frac{|Y|}{v}=|\{B:(x,B)\in Y\}|\leq1
\qquad(x\in X).
\]
These counts are integers, so \(|Y|=v\) and
\[
Y=\{(x,B_x):x\in X\},\qquad
B_x\subseteq X\setminus\{x\},\quad |B_x|=k,
\]
with each \(B_x\) uniquely determined.
Define adjacency by \(x\sim y\) if and only if \(y\in B_x\).
The definition of \(R\) gives, for \(x\ne y\),
\[
y\in B_x\quad\Longleftrightarrow\quad x\in B_y,
\qquad
|B_x\cap B_y|=
\begin{cases}
0,& y\in B_x,\\
1,& y\notin B_x.
\end{cases}
\]
Thus the graph is simple and \(k\)-regular, and every nonadjacent pair has distance two.
Such pairs exist because \(k<v-1\), so the diameter is two and the graph has \(v=k^2+1\) vertices.

Conversely, let \(\Gamma\) be a Moore graph on \(X\).
From each \(x\in X\), the bound
\[
1+k+k(k-1)=v
\]
on the number of vertices reachable within two steps is attained.
Hence the \(k(k-1)\) walks of length two that do not immediately return reach distinct vertices outside \(\{x\}\cup N_\Gamma(x)\).
Adjacent vertices therefore have no common neighbor, and nonadjacent vertices have exactly one~\cite{HoffmanSingleton1960}.
This proves that \(Y_\Gamma\) is a \(\Delta\)-code.
Every first coordinate occurs once, and \(k\)-regularity makes every point belong to exactly \(k\) neighborhoods.
Proposition~\ref{prop:moore-standard-block} gives the design condition.
The constructions are inverse.
\end{proof}

The design bound is attained for every \(k\geq2\), independently of the existence of a Moore graph.

\begin{proposition}
\[
\DelsDes(W,T)=v,\qquad \DelsCode(W,\Delta)\leq v.
\]
The code bound is an equality whenever a Moore graph of degree \(k\) and diameter two exists.
\end{proposition}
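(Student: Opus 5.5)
For the design bound \(\DelsDes(W,T)\geq v\), we apply Proposition~\ref{prop:schurian-design-orthogonality-bound} with a suitable \(Z\subseteq\Omega\) satisfying \(|\Omega|/|Z|=v\) and \(e_i\chi_Z=0\) for every \(i\notin T\cup\{0\}\). The natural choice is a set \(Z\) of size \(|\Omega|/v=\binom{v-1}{k}\). We take \(Z:=\{(x_0,B):B\subseteq X\setminus\{x_0\},\ |B|=k\}\) for a fixed \(x_0\in X\). Then \(\chi_Z=\Phi'(\delta_{x_0})\), where \(\Phi'\) extends \(\Phi\) to all of \(\C^X\). Since \(\delta_{x_0}=v^{-1}1+(\delta_{x_0}-v^{-1}1)\), we get
\[
\chi_Z=v^{-1}1+\Phi(\delta_{x_0}-v^{-1}1)\in\C1\oplus\Phi(V),
\]
which lies in \(\operatorname{im}e_0\oplus\operatorname{im}e_i\) for \(i\in T\). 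Hence \(e_i\chi_Z=0\) for \(i\notin T\cup\{0\}\). Proposition~\ref{prop:schurian-design-orthogonality-bound} then gives \(\DelsDes(W,T)\geq|\Omega|/|Z|=v\).

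For the reverse inequality, we exhibit a \(T\)-design of effective size \(v\), since Theorem~\ref{thm:conic-bounds} and the actual distributions give a feasible point with objective \(\esize\). We do not need a Moore graph for this. The intended example is a set \(Y\) with \(|Y|=v\), exactly one pair \((x,B_x)\) for each \(x\), and each point lying in exactly \(k\) of the sets \(B_x\). By Proposition~\ref{prop:moore-standard-block}, such a \(Y\) is a \(T\)-design. The construction is a circulant on \(X=\mathbb Z/v\mathbb Z\): set \(B_x:=x+\{1,\ldots,k\}\), which avoids \(x\) because \(k<v\). Each \(z\) lies in \(B_x\) exactly for \(x\in z-\{1,\ldots,k\}\), that is, for \(k\) values of \(x\). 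Thus \(\esize(\chi_Y)=|Y|=v\), and so \(\DelsDes(W,T)\leq v\).

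For the code bound, we use a standard Delsarte duality argument, combining the design witness with the code constraints. Let \((\mathsf A,\mathsf B)\) be feasible for the code program. Take \(M\) built from \(Z\) as in the proof of Proposition~\ref{prop:schurian-design-orthogonality-bound}, so that \(M=I+\sum_{a\neq0}c_aB_{R_a}\) with \(c_a\ge0\), \(e_0\circ M=|Z|E_0\), and \(e_i\circ M=0\) outside \(T\cup\{0\}\). Here \(c_a>0\) exactly when the pairs in \(R_a\) share their first coordinate. For \(a\in\Delta\setminus\{0\}\), the pairs in \(R_a\) have \(x\ne y\), so \(c_a=0\). Together with \(\mathsf A_a=0\) for \(a\notin\Delta\), this gives \(\mathsf A(M)=\mathsf A(I)=1\).

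We then expand \(\mathsf A(M)\) by the inverse MacWilliams relation:
\[
1=\mathsf A(M)=|\Omega|^{-1}\Bigl(|Z|\mathsf B_0(E_0)+\sum_{i\in T}\mathsf B_i(e_i\circ M)\Bigr).
\]
Since \(M\) is positive semidefinite and commutes with \(e_i\), we have \(e_i\circ M\in K_i^+\), so each term \(\mathsf B_i(e_i\circ M)\) is nonnegative. This yields \(\mathsf B_0(E_0)\le|\Omega|/|Z|=v\), and therefore \(\DelsCode(W,\Delta)\le v\).

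When a Moore graph exists, Proposition~\ref{prop:moore-correspondence} gives a \(\Delta\)-code of size \(v\). Its distributions are feasible for the code program with objective \(v\), so equality holds. The main step to verify carefully is that \(c_a=0\) for \(a\in\Delta\setminus\{0\}\), that is, that no orbit in \(R\) relates two pairs with the same first coordinate; this holds because \(R\) requires \(x\ne y\).
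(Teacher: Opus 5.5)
Your proof is correct and essentially the paper's. For \(\DelsDes(W,T)=v\) you use the same set \(F_{x_0}\) in Proposition~\ref{prop:schurian-design-orthogonality-bound} and the same circulant design. For the code bound, your averaged operator \(M\) is exactly the paper's \(L=\sum_{x\in X}\chi_{F_x}\chi_{F_x}^{\dagger}\), and dropping the nonnegative terms \(\mathsf B_i(e_i\circ M)\), \(i\in T\), is the paper's step \(\mathsf A\bigl(L-\frac{|\Omega|}{v}E_0\bigr)\ge0\), because \(\sum_{i\in T}e_i\circ M=L-\frac{|\Omega|}{v}E_0\).
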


\begin{proof}
Put
\[
F_x:=\{(x,B)\in\Omega\}\quad(x\in X),\qquad
|F_x|=\binom{v-1}{k}=\frac{|\Omega|}{v}.
\]
Since \(\chi_{F_x}\in\C1\oplus\Phi(V)\), Proposition~\ref{prop:schurian-design-orthogonality-bound} gives
\[
\DelsDes(W,T)\geq\frac{|\Omega|}{|F_x|}=v.
\]
To attain this bound, identify \(X\) with \(\mathbb Z/v\mathbb Z\) and take
\[
Y:=\{(x,\{x+1,\ldots,x+k\}):x\in\mathbb Z/v\mathbb Z\}.
\]
Each first coordinate occurs once, and each point belongs to \(k\) of these subsets.
By Proposition~\ref{prop:moore-standard-block}, \(Y\) is a \(T\)-design of size \(v\), so Theorem~\ref{thm:conic-bounds} gives \(\DelsDes(W,T)\leq v\).

For the code bound, use the \(G\)-invariant operator
\[
L:=\sum_{x\in X}\chi_{F_x}\chi_{F_x}^{\dagger}\in W,
\]
whose entries are one exactly when the first coordinates agree.
The sets \(F_x\) partition \(\Omega\), so \(vL/|\Omega|\) is the orthogonal projection onto functions depending only on the first coordinate.
This space contains the constants, hence
\[
L-\frac{|\Omega|}{v}E_0\succeq0
\]
as an operator on \(\C^\Omega\).
Every feasible \(\mathsf A\) is nonnegative on positive operators by Proposition~\ref{prop:distribution-positivity}.
All nondiagonal orbits occurring in \(L\) lie outside \(\Delta\), so the code constraints give \(\mathsf A(L)=\mathsf A(I)=1\).
It follows that
\[
0\leq\mathsf A\left(L-\frac{|\Omega|}{v}E_0\right)
=1-\frac{|\Omega|}{v}\mathsf A(E_0)
=1-\frac{\mathsf A(J)}{v}.
\]
Taking the supremum proves the code bound.
If a Moore graph exists, Proposition~\ref{prop:moore-correspondence} gives a code of size \(v\), and Theorem~\ref{thm:conic-bounds} gives equality.
\end{proof}

For \(k=3\), the Petersen graph on \(X:=\binom{\{1,2,3,4,5\}}2\), with disjoint pairs adjacent, is a Moore graph~\cite{HoffmanSingleton1960}.
Its subset \(Y_\Gamma\) is a code and a design of size ten, attaining
\[
\DelsCode(W,\Delta)=\DelsDes(W,T)=10.
\]
\section{Examples on matrix algebras}\label{sec:quantum-examples}

\subsection{Translation Hamming schemes}\label{sec:translation-hamming}

Weyl operators realize the Hamming scheme \(H(n,q^2)\) on a matrix algebra.
Its code and design conditions become vanishing error compressions and scalar partial traces, respectively.
We use these descriptions to obtain bounds for pure quantum codes and quantum orthogonal arrays.

Let \(q\) be a prime power and \(n\) a positive integer.
Put \(\mathcal H:=(\C^q)^{\otimes n}\), \(D:=q^n\), and
\[
A:=\End(\mathcal H)\cong M_D(\C),\qquad h_A(X):=D\operatorname{Tr}(X).
\]
Thus the dimension of the finite quantum space is \(h_A(1_A)=D^2\).

Fix a nontrivial additive character \(\psi\) of \(\mathbb F_q\), and let \(\{e_x:x\in\mathbb F_q\}\) be the standard orthonormal basis of \(\C^q\).
Define the Weyl operators~\cite{Knill1996,AshikhminKnill2001} by
\[
W_{(a,b)}e_x:=\psi(bx)e_{x+a},\qquad
W_\alpha:=W_{\alpha_1}\otimes\cdots\otimes W_{\alpha_n}
\quad\bigl(\alpha\in G:=(\mathbb F_q^2)^n\bigr).
\]
The weight records the number of tensor factors on which a Weyl operator acts nontrivially:
\[
\operatorname{supp}(\alpha):=\{j\in[n]:\alpha_j\ne(0,0)\},\qquad
\operatorname{wt}(\alpha):=|\operatorname{supp}(\alpha)|.
\]
For \(\alpha_j=(a_j,b_j)\) and \(\beta_j=(c_j,d_j)\), put \([\alpha,\beta]:=\sum_{j=1}^n(b_jc_j-a_jd_j)\).

The Weyl operators are unitary and satisfy
\[
\begin{aligned}
\operatorname{Tr}(W_\alpha^\ast W_\beta)&=D\delta_{\alpha,\beta},\\
W_\alpha W_\beta&=\psi\!\left(\sum_{j=1}^n b_jc_j\right)W_{\alpha+\beta},\\
W_\alpha W_\beta W_\alpha^\ast&=\psi([\alpha,\beta])W_\beta.
\end{aligned}
\]
The defining action gives
\[
W_{(a,b)}W_{(c,d)}e_x=\psi((b+d)x+bc)e_{x+a+c}.
\]
Exchanging the factors proves the conjugation identity.
Different translations have zero trace, and character orthogonality gives
\[
\operatorname{Tr}(W_{(a,b)}^\ast W_{(a,d)})
=\sum_{x\in\mathbb F_q}\psi((d-b)x)=q\delta_{b,d}.
\]
Taking tensor products proves trace orthogonality.
Thus the \(D^2=\dim_\C A\) Weyl operators form a basis of \(A\).

To group errors by weight, put \(G_r:=\{\alpha\in G:\operatorname{wt}(\alpha)=r\}\) for \(0\le r\le n\), and define
\[
\begin{aligned}
B_r(X)&:=\sum_{\alpha\in G_r}W_\alpha XW_\alpha^\ast,\\
E_s(X)&:=\frac1D\sum_{\beta\in G_s}\operatorname{Tr}(W_\beta^\ast X)W_\beta
\end{aligned}
\qquad(0\le r,s\le n,\ X\in A).
\]
Here \(E_s\) is the orthogonal projection onto the Weyl operators of weight \(s\).
Put
\[
W:=\operatorname{span}_{\C}\{B_0,\ldots,B_n\}.
\]
The change of basis is given by the Krawtchouk polynomials
\[
\mathsf K_r(s):=\sum_{\ell=0}^r(-1)^\ell(q^2-1)^{r-\ell}
\binom{s}{\ell}\binom{n-s}{r-\ell}
\qquad(0\le r,s\le n),
\]
where \(\binom{u}{v}=0\) for \(v<0\) or \(v>u\).

\begin{proposition}\label{prop:weyl-hamming-qas}
The space \(W\) is a commutative and cocommutative QAS on \(A\), with simple ideals
\[
V_r=\C B_r,\qquad U_s=\C E_s
\qquad(0\le r,s\le n)
\]
for \(\bullet\) and \(\circ\), respectively.
The identities \(B_0=I\) and \(E_0=D^{-2}J\) hold, and the two bases are related by
\[
\begin{aligned}
B_r&=\sum_{s=0}^n\mathsf K_r(s)E_s,\\
E_s&=\frac1{D^2}\sum_{r=0}^n\mathsf K_s(r)B_r.
\end{aligned}
\]
Thus its first and second eigenmatrices are those of \(H(n,q^2)\).
\end{proposition}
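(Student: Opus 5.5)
The plan is to compute $B_r$ and $E_s$ directly in the Weyl basis, where both are diagonal. Every structural claim then reduces to character sums over $\mathbb F_q^2$.

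\emph{Step 1: $B_r$ is diagonal in the Weyl basis.} By the conjugation identity,
\[
B_r(W_\beta)=\Bigl(\sum_{\alpha\in G_r}\psi([\alpha,\beta])\Bigr)W_\beta .
\]
The symplectic form $[\cdot,\cdot]$ is nondegenerate on each factor $\mathbb F_q^2$, so $\gamma\mapsto\psi([\gamma,\beta_j])$ runs over the characters of $\mathbb F_q^2$. On a single factor, the sum over nonzero $\gamma$ equals $q^2-1$ if $\beta_j=0$ and $-1$ otherwise. Choosing the support of $\alpha$ and factoring over tensor positions, the eigenvalue is $\mathsf K_r(\operatorname{wt}\beta)$. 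This gives $B_r=\sum_s\mathsf K_r(s)E_s$, since the $W_\beta$ are an orthogonal basis and $E_s$ is the orthogonal projection onto the span of the weight-$s$ operators.

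In particular $B_0=I=\sum_sE_s$. Also $E_0(X)=D^{-1}\operatorname{Tr}(X)1$, and $J(X)=h_A(X)1=D\operatorname{Tr}(X)1$, so $E_0=D^{-2}J$. The Krawtchouk matrix of $H(n,q^2)$ squares to $D^2=(q^2)^n$ times the identity. Hence $E_s=D^{-2}\sum_r\mathsf K_s(r)B_r$, using the standard relation $\mathsf K_r(s)\binom ns(q^2-1)^s=\mathsf K_s(r)\binom nr(q^2-1)^r$ or simply inverting the classical relation. Consequently $W=\operatorname{span}\{E_s\}$. It is closed under composition, is commutative, and its simple ideals for $\circ$ are the lines $U_s=\C E_s$. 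It is also closed under $\dagger$, since $E_s$ is self-adjoint.

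\emph{Step 2: the quantum Schur product.} I expect this to be the main obstacle, because it requires computing $m^\dagger$ for the normalized trace $h_A=D\operatorname{Tr}$. The plan is to show
\[
(\mathrm{Ad}_{W_\alpha})\bullet(\mathrm{Ad}_{W_{\alpha'}})=\delta_{\alpha\alpha'}\,\mathrm{Ad}_{W_\alpha}
\]
up to a phase that cancels. To see this, write $m^\dagger$ in a matrix-unit basis: with $h_A=D\operatorname{Tr}$ one gets $m^\dagger(e_{ij})=D^{-1}\sum_k e_{ik}\otimes e_{kj}$. Then
\[
(S\bullet T)(X)=D^{-1}\sum_k S(Xe_k\ldots)\cdots,
\]
and for $S=\mathrm{Ad}_U$, $T=\mathrm{Ad}_V$ this yields
\[
\mathrm{Ad}_U\bullet\mathrm{Ad}_V(X)=D^{-1}UXU^*\,\operatorname{Tr}(U^*V)\ldots
\]
More precisely, $\sum_k U e_{ik}U^*Ve_{kj}V^*$ collapses to $U(\ldots)\,\operatorname{Tr}$-type factors. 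Trace orthogonality $\operatorname{Tr}(W_\alpha^*W_{\alpha'})=D\delta_{\alpha\alpha'}$ then gives the Kronecker delta. I will verify the constant carefully, using the facts that $I\bullet I=I$ and that $I=\mathrm{Ad}_{1}$ is the case $\alpha=0$, which pins the normalization.

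Summing over $\alpha\in G_r$ gives $B_r\bullet B_{r'}=\delta_{rr'}B_r$. Moreover $B_r^*=B_r$, because $\mathrm{Ad}_U^*=\mathrm{Ad}_U$ under $S^*(x)=S(x^*)^*$. So $W$ is closed under $\bullet$ and $*$, and it is cocommutative, with the $B_r$ as pairwise orthogonal minimal projections, so that $V_r=\C B_r$. Finally, $I\in W$, and $J=D^2E_0\in W$.

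\emph{Step 3: homogeneity.} Every $\bullet$-projection in $W$ has the form $\sum_{r\in R}B_r$. Since $p\bullet I=p$ with $I=B_0$, only $R\subseteq\{0\}$ survives, so the projections are $0$ and $I$. This shows $W$ is a QAS, and the eigenmatrices are read off from Step 1.
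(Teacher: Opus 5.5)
Your proposal is correct and follows the paper's proof: diagonalize $B_r$ on the Weyl basis via the conjugation identity and one-coordinate character sums to get $\mathsf K_r(s)$, invert via Krawtchouk inversion, and combine $m^\dagger(\varepsilon_{ij})=D^{-1}\sum_k\varepsilon_{ik}\otimes\varepsilon_{kj}$ with Weyl trace orthogonality to obtain $B_r\bullet B_{r'}=\delta_{rr'}B_r$. One correction: the exact formula is $\mathrm{Ad}_U\bullet\mathrm{Ad}_V=D^{-1}\operatorname{Tr}(U^*V)\,(X\mapsto UXV^*)$, not $UXU^*$, and no phase arises, since the coefficient vanishes unless $U=V$. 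Otherwise the differences are minor: you get $\dagger$-closure from the self-adjointness of the $E_s$ rather than from $G_r=-G_r$, and you check homogeneity directly from $(W,\bullet,\ast)\cong\C^{n+1}$ with $I=B_0$, instead of citing Gromada's result that every quantum coherent algebra on a matrix algebra is homogeneous.
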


\begin{proof}
The conjugation identity gives
\[
B_r(W_\beta)=\left(\sum_{\alpha\in G_r}\psi([\alpha,\beta])\right)W_\beta.
\]
Character orthogonality in one coordinate gives
\[
\sum_{(a,b)\ne(0,0)}\psi(bc-ad)=
\begin{cases}
q^2-1,&(c,d)=(0,0),\\
-1,&(c,d)\ne(0,0).
\end{cases}
\]
If \(\operatorname{wt}(\beta)=s\), choosing \(\ell\) nonzero coordinates of \(\alpha\) inside \(\operatorname{supp}(\beta)\) and \(r-\ell\) outside gives
\[
\sum_{\alpha\in G_r}\psi([\alpha,\beta])=\mathsf K_r(s).
\]
This proves the first expansion; Krawtchouk inversion gives the second~\cite[Chapter~5]{Delsarte1973}.
Since the \(E_s\) are mutually orthogonal projections summing to \(I\), these expansions prove closure and commutativity under composition.

For standard matrix units \(\varepsilon_{ij}\) of \(M_D(\C)\),
\[
m^\dagger(\varepsilon_{ij})
=\frac1D\sum_{k=1}^D\varepsilon_{ik}\otimes\varepsilon_{kj}.
\]
Substitution gives, for unitary \(U,V\in A\),
\[
\bigl(X\mapsto UXU^\ast\bigr)\mathbin{\bullet}
\bigl(X\mapsto VXV^\ast\bigr)
=\frac{\operatorname{Tr}(U^\ast V)}D
\bigl(X\mapsto UXV^\ast\bigr).
\]
Weyl orthogonality therefore gives
\[
B_r\mathbin{\bullet}B_s=\delta_{r,s}B_r,\qquad B_r^\ast=B_r.
\]
Also \(B_r^\dagger=B_r\), since \(G_r=-G_r\) and \(W_{-\alpha}\) is a scalar multiple of \(W_\alpha^\ast\).
The definitions and the second expansion at \(s=0\) give
\[
B_0=I,\qquad E_0=D^{-2}J,\qquad \sum_{r=0}^nB_r=J.
\]
Thus \(W\) is a cocommutative quantum coherent algebra, with the stated simple ideals.
It is homogeneous by~\cite[Proposition~2.10]{Gromada2024}, since \(A\) is a matrix algebra.
The eigenmatrices follow from the two expansions.
\end{proof}

The Weyl product formula realizes the bicharacter twist of \(H(n,q^2)\) with
\[
(\alpha,\beta)\longmapsto
\psi\!\left(-\sum_{j=1}^n b_jc_j\right)
\qquad(\alpha_j=(a_j,b_j),\ \beta_j=(c_j,d_j))
\]
in the convention of~\cite[Section~4.3]{Gromada2024}.
The following cones reduce its conic bounds to linear programs.

\begin{lemma}\label{lem:weyl-hamming-cones}
For \(0\le r,s\le n\), the cones are
\[
R_r^+=\R_{\ge0}B_r,\qquad K_s^+=\R_{\ge0}E_s.
\]
\end{lemma}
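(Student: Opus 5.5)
The plan is to use the one-dimensionality of the ideals from Proposition~\ref{prop:weyl-hamming-qas}: \(V_r=\C B_r\) and \(U_s=\C E_s\). Every element of \(V_r\) is then \(cB_r\), and every element of \(U_s\) is \(cE_s\), for some \(c\in\C\). So each cone is determined by deciding which scalars \(c\) are admissible. In both cases I will show that the involution forces \(c\in\R\), that the quadratic-form condition forces \(c\ge0\), and that \(c\ge0\) is sufficient.

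For \(R_r^+\), I first use \(B_r^\ast=B_r\) from the proof of Proposition~\ref{prop:weyl-hamming-qas}. Since \(\ast\) is conjugate linear, \((cB_r)^\ast=\bar cB_r\), and because \(B_r\neq0\) the self-adjointness condition gives \(c\in\R\).

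\emph{Sufficiency.} I show that \(B_r\) itself lies in \(R_r^+\). For \(\chi\in A_+\), traciality of \(h_A\) gives
\[
\langle\chi,B_r(\chi)\rangle=\sum_{\alpha\in G_r}D\operatorname{Tr}\bigl(\chi W_\alpha\chi W_\alpha^\ast\bigr)
=\sum_{\alpha\in G_r}D\operatorname{Tr}\bigl(\chi^{1/2}W_\alpha\chi W_\alpha^\ast\chi^{1/2}\bigr)\ge0.
\]
Each summand is nonnegative because \(W_\alpha\chi W_\alpha^\ast\succeq0\), so its compression by \(\chi^{1/2}\) is also positive semidefinite.

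\emph{Necessity of \(c\ge0\).} I test on \(\chi=1_A\in A_+\). Since the \(W_\alpha\) are unitary, \(B_r(1_A)=|G_r|1_A\), and hence
\[
\langle1_A,cB_r(1_A)\rangle=c|G_r|D^2.
\]
The set \(G_r\) is nonempty for \(0\le r\le n\), so nonnegativity forces \(c\ge0\).

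For \(K_s^+\), the operator \(E_s\) is a nonzero orthogonal projection on the Hilbert space \(A\), because \(G_s\neq\varnothing\). Consequently \((cE_s)^\dagger=\bar cE_s\), so \(\dagger\)-self-adjointness forces \(c\in\R\). For a real \(c\), the operator \(cE_s\) is positive exactly when \(c\ge0\): to see necessity, evaluate the quadratic form at a nonzero vector \(\xi\in\operatorname{im}E_s\), such as \(\xi=W_\beta\) with \(\beta\in G_s\), which gives \(c\|\xi\|^2\). Alternatively, this follows directly from Proposition~\ref{prop:distribution-positivity}, which identifies \(K_s^+\) with the positive operators in \(U_s\).

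No step is a real obstacle. The only point needing care is that the defining condition of \(R_r^+\) tests only elements \(\chi\in A_+\), not arbitrary \(\xi\in A\). For that reason the sufficiency argument uses the positivity of the conjugations \(W_\alpha\chi W_\alpha^\ast\) and traciality, rather than operator positivity of \(B_r\).
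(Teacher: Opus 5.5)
Your proposal is correct and takes essentially the same route as the paper. Both arguments use the one-dimensionality of \(V_r\) and \(U_s\) together with the involutions to reduce to real multiples, get nonnegativity on \(A_+\) from the trace of \(\chi^{1/2}W_\alpha\chi W_\alpha^\ast\chi^{1/2}\) (written in the paper as \((\chi^{1/2}W_\alpha\chi^{1/2})(\chi^{1/2}W_\alpha\chi^{1/2})^\ast\)), exclude negative multiples by evaluating at \(1_A\), and obtain \(K_s^+\) from \(E_s\) being a nonzero orthogonal projection.
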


\begin{proof}
For \(\chi\in A_+\), cyclicity of the trace gives
\[
\langle\chi,B_r\chi\rangle
=D\sum_{\alpha\in G_r}\operatorname{Tr}\!\left(
(\chi^{1/2}W_\alpha\chi^{1/2})
(\chi^{1/2}W_\alpha\chi^{1/2})^\ast\right)\ge0.
\]
Hence every nonnegative multiple of \(B_r\) belongs to \(R_r^+\).
Conversely, \(V_r=\C B_r\) and \(B_r^\ast=B_r\) imply that every self-adjoint element of \(V_r\) is a real multiple of \(B_r\).
Evaluating at \(1_A\) gives \(\langle1_A,B_r1_A\rangle=D^2|G_r|>0\), so a negative multiple cannot belong to \(R_r^+\).
The other equality follows from \(U_s=\C E_s\), since \(E_s\) is a nonzero orthogonal projection.
\end{proof}

\subsubsection{Quantum codes}

Error detection is expressed by compression to a code subspace~\cite[Section~III]{KnillLaflamme1997}.

\begin{definition}\label{def:quantum-code-purity}
A quantum code is a nonzero subspace of \(\mathcal H\), represented by its orthogonal projection \(\Pi\).
For \(1\le\delta\le n+1\), it detects all errors of weight less than \(\delta\) if
\[
\Pi W_\alpha\Pi=c_\alpha\Pi
\qquad(0<\operatorname{wt}(\alpha)<\delta)
\]
for scalars \(c_\alpha\in\C\).
It is pure to distance \(\delta\) if
\[
\Pi W_\alpha\Pi=0
\qquad(0<\operatorname{wt}(\alpha)<\delta).
\]
\end{definition}

These conventions agree with~\cite[Sections~1 and~3]{Rains1998} and~\cite{Rains1999}.
The following proposition identifies purity with the QAS code condition for
\[
\Delta:=\{0\}\cup\{r\in[n]:r\ge\delta\}.
\]

\begin{proposition}\label{prop:weyl-code-distributions}
Let \(\Pi\in A\) be a nonzero projection of rank \(K\).
Write its distributions as \(a_r:=A_r^\Pi(B_r)\) and \(b_s:=B_s^\Pi(E_s)\).
Then
\[
\begin{aligned}
a_r&=\frac1K\sum_{\alpha\in G_r}\operatorname{Tr}(\Pi W_\alpha\Pi W_\alpha^\ast),\\
b_s&=\frac DK\sum_{\beta\in G_s}|\operatorname{Tr}(\Pi W_\beta)|^2,
\end{aligned}
\]
and
\[
\begin{aligned}
b_s&=\sum_{r=0}^n\mathsf K_s(r)a_r,\\
a_r&=\frac1{D^2}\sum_{s=0}^n\mathsf K_r(s)b_s.
\end{aligned}
\]
Their normalizations are
\[
a_0=1,\qquad \sum_{r=0}^na_r=b_0=DK,\qquad \sum_{s=0}^nb_s=D^2.
\]
The projection \(\Pi\) is a \(\Delta\)-code if and only if
\begin{equation}\label{eq:weyl-code-condition}
\Pi W_\alpha\Pi=0
\qquad(0<\operatorname{wt}(\alpha)<\delta).
\end{equation}
\end{proposition}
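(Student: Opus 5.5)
The plan is to compute both distributions directly from Definition~\ref{def:inner-distribution} with $\chi=\Pi$, and then read off the transforms, normalizations and code condition from results already proved. Since $\Pi$ is a projection of rank $K$, we have $h_A(\Pi^2)=h_A(\Pi)=D\operatorname{Tr}\Pi=DK$, and $N=D^2$.

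\textbf{Inner distribution.} We compute
\[
\langle\Pi,B_r(\Pi)\rangle=D\sum_{\alpha\in G_r}\operatorname{Tr}(\Pi W_\alpha\Pi W_\alpha^\ast),
\]
and dividing by $DK$ gives $a_r$.

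\textbf{Dual distribution.} Here
\[
\langle\Pi,E_s(\Pi)\rangle=\sum_{\beta\in G_s}\operatorname{Tr}(W_\beta^\ast\Pi)\operatorname{Tr}(\Pi W_\beta).
\]
This holds because $\langle\Pi,W_\beta\rangle=D\operatorname{Tr}(\Pi W_\beta)$ and the factor $1/D$ in $E_s$ cancels one $D$. Since $\Pi$ is self-adjoint, $\operatorname{Tr}(W_\beta^\ast\Pi)=\overline{\operatorname{Tr}(\Pi W_\beta)}$. Multiplying by $N/(DK)=D/K$ then yields $b_s$.

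\textbf{Transforms and normalizations.} The Krawtchouk relations follow from Theorem~\ref{thm:macwilliams}, evaluated at the bases of Proposition~\ref{prop:weyl-hamming-qas}. The cleanest route is to use the linear functional $A^\Pi$ on $W$. Then $b_s=NA^\Pi(E_s)=\sum_r\mathsf K_s(r)A^\Pi(B_r)$, using $E_s=D^{-2}\sum_r\mathsf K_s(r)B_r$. Likewise $a_r=A^\Pi(B_r)=\sum_s\mathsf K_r(s)A^\Pi(E_s)=D^{-2}\sum_s\mathsf K_r(s)b_s$. For this to work, one must note that $A_r^\Pi(B_r)=A^\Pi(B_r)$, since $z_r\bullet B_r=B_r$ and $V_r=\C B_r$.

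The normalizations come from Proposition~\ref{lem:distribution-normalization}: $a_0=1$, $\sum_r a_r=A^\Pi(J)=\esize(\Pi)=h_A(\Pi)=DK$, $b_0=B_0^\Pi(E_0)=DK$, and $\sum_s b_s=N=D^2$. Here $e_s=E_s$ because $U_s=\C E_s$.

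\textbf{Code condition.} The definition requires $A_r^\Pi=0$ for $r\notin\Delta$, that is, for $1\le r<\delta$. Since $V_r=\C B_r$, this is equivalent to $a_r=0$. The key step, and the only one beyond bookkeeping, is to show that each summand is nonnegative and vanishes exactly when $\Pi W_\alpha\Pi=0$. Writing $\Pi=\Pi^2$ and using cyclicity,
\[
\operatorname{Tr}(\Pi W_\alpha\Pi W_\alpha^\ast)=\operatorname{Tr}\bigl((\Pi W_\alpha\Pi)(\Pi W_\alpha\Pi)^\ast\bigr)=\|\Pi W_\alpha\Pi\|_{\mathrm{HS}}^2\ge0.
\]
Hence $a_r=0$ if and only if $\Pi W_\alpha\Pi=0$ for all $\alpha\in G_r$. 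Taking this over all $1\le r<\delta$ gives exactly \eqref{eq:weyl-code-condition}.

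I expect no real obstacle. The main care needed is with the normalization factors $D$ coming from $h_A=D\operatorname{Tr}$, and with correctly matching the indexing of $\Delta$.
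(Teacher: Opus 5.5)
Your proposal is correct and follows essentially the same route as the paper. You substitute the formulas for \(B_r\) and \(E_s\) into the distribution definitions with \(h_A(\Pi^2)=DK\), and you get the transforms from the MacWilliams identity together with the Krawtchouk expansions and the normalizations from the general normalization proposition. You then identify the code condition by writing each summand of \(a_r\) as \(\operatorname{Tr}\bigl((\Pi W_\alpha\Pi)(\Pi W_\alpha\Pi)^\ast\bigr)\ge0\) and using \(V_r=\C B_r\).
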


\begin{proof}
Since \(h_A(\Pi^2)=DK\), substituting the formulas for \(B_r\) and \(E_s\) into equations~\eqref{eq:inner-distribution} and~\eqref{eq:dual-distribution} gives the distributions.
Their transforms follow from Proposition~\ref{prop:weyl-hamming-qas} and Theorem~\ref{thm:macwilliams}; their normalizations follow from Proposition~\ref{lem:distribution-normalization}, using \(B_0=I\), \(\sum_rB_r=J\), and \(\sum_sE_s=I\).
To identify the code condition, write each contribution as the trace of a positive operator:
\[
\operatorname{Tr}(\Pi W_\alpha\Pi W_\alpha^\ast)
=\operatorname{Tr}\!\left((\Pi W_\alpha\Pi)(\Pi W_\alpha\Pi)^\ast\right)\ge0.
\]
The sum defining \(a_r\) therefore vanishes exactly when every \(\Pi W_\alpha\Pi\), \(\alpha\in G_r\), is zero.
Because \(V_r=\C B_r\), the scalar equation \(a_r=0\) is equivalent to vanishing of the whole functional \(A_r^\Pi\).
Requiring this for \(1\le r<\delta\) is precisely the \(\Delta\)-code condition.
\end{proof}

Lemma~\ref{lem:weyl-hamming-cones} makes both positivity constraints scalar nonnegativity conditions.
Using the transform in Proposition~\ref{prop:weyl-code-distributions}, the code bound \(\DelsCode(W,\Delta)\) is therefore the optimum of
\[
\begin{array}{ll}
\mathrm{maximize} & \displaystyle\sum_{r=0}^na_r,\\[1mm]
\mathrm{subject\ to} & a_0=1,\\
& a_r=0 \quad (1\le r<\delta),\\
& a_r\ge0 \quad (\delta\le r\le n),\\
& \displaystyle\sum_{r=0}^n\mathsf K_s(r)a_r\ge0 \quad (1\le s\le n).
\end{array}
\]
This is the Delsarte linear program for codes of minimum distance at least \(\delta\) in \(H(n,q^2)\)~\cite[Chapter~5]{Delsarte1973}.
Since \(\esize(\Pi)=DK\), Proposition~\ref{prop:weyl-code-distributions} and Theorem~\ref{thm:conic-bounds} give, for every quantum code pure to distance \(\delta\),
\[
D\operatorname{rank}\Pi\le\DelsCode(W,\Delta).
\]

The Shor--Laflamme--Rains bounds strengthen the distribution constraints for quantum codes by imposing, in the present normalization,
\[
b_r\le DK\,a_r\qquad(0\le r\le n).
\]
Indeed, Cauchy--Schwarz gives \(|\operatorname{Tr}(\Pi W_\alpha)|^2\le K\operatorname{Tr}(\Pi W_\alpha\Pi W_\alpha^\ast)\), and summing over \(G_r\) gives the inequality above~\cite{ShorLaflamme1997,Rains1998,Rains1999}.
These inequalities use that the positive element is a projection of rank \(K\).
Together with the further constraints for \(q=2\) in~\cite[Theorem~10]{Rains1999}, they give restrictions not imposed by the two QAS positivity cones alone.

\subsubsection{Quantum orthogonal arrays}

The design condition requires scalar partial traces.
For \(\Lambda\subseteq[n]\), put
\[
\Lambda^c:=[n]\setminus\Lambda,\qquad
I_\Lambda:=\bigotimes_{j\in\Lambda}I_q.
\]
The partial trace
\[
\operatorname{Tr}_{\Lambda^c}:A\longrightarrow
\bigotimes_{j\in\Lambda}M_q(\C)
\]
is characterized by
\[
\operatorname{Tr}\!\left(Y\operatorname{Tr}_{\Lambda^c}(X)\right)
=\operatorname{Tr}\!\left((Y\otimes I_{\Lambda^c})X\right)
\]
for \(X\in A\) and \(Y\in\bigotimes_{j\in\Lambda}M_q(\C)\)~\cite[Section~2.4]{NielsenChuang2010}.
Here the tensor factors are ordered as \(\Lambda,\Lambda^c\), with increasing indices within each group, and the empty tensor product is \(\C\).

Quantum orthogonal arrays prescribe uniform partial traces of row vectors.
We use the definition of Du et al.~\cite{DuYinPangWang2020} and Zang et al.~\cite[Definition~4.1]{ZangFacchiTian2021}, which includes cross terms.
The original definition uses only the sum of the row projections~\cite[Definition~8]{GoyenecheRaissiDiMartinoZyczkowski2018}.

\begin{definition}\label{def:qoa}
Let \(R\) be a positive integer and \(t\in\{0,\ldots,n\}\).
A quantum orthogonal array \(\operatorname{QOA}(R,n,q,t)\) is a family \((\varphi_j)_{j=1}^R\in\mathcal H^R\) with \(\|\varphi_j\|=1\) for \(j\in[R]\), satisfying
\[
\sum_{j,l=1}^R\operatorname{Tr}_{\Lambda^c}(\varphi_j\varphi_l^\dagger)
=\frac{R}{q^t}I_\Lambda
\qquad(\Lambda\subseteq[n],\ |\Lambda|=t).
\]
The integer \(t\) is called its strength.
\end{definition}

The definition applies to any integer \(q\ge2\). Here we use a finite-field Weyl realization, so we assume that \(q\) is a prime power.
The next theorem expresses the design condition by partial traces and applies it to the sum of the QOA rows.

\begin{theorem}\label{thm:qoa-qas-design}
Let \(t\in\{0,\ldots,n\}\) and \(\chi\in A_+\setminus\{0\}\).
Then \(\chi\) is a \(\{1,\ldots,t\}\)-design for \(W\) if and only if
\[
\operatorname{Tr}_{\Lambda^c}(\chi)
=\frac{\operatorname{Tr}\chi}{q^t}I_\Lambda
\qquad(\Lambda\subseteq[n],\ |\Lambda|=t).
\]
For \((\varphi_j)_{j=1}^R\in\mathcal H^R\) with \(\|\varphi_j\|=1\), put
\[
\varphi=\frac1{\sqrt R}\sum_{j=1}^R\varphi_j,
\qquad p=\varphi\varphi^\dagger.
\]
The family is a \(\operatorname{QOA}(R,n,q,t)\) if and only if \(\|\varphi\|=1\) and \(p\) is a \(\{1,\ldots,t\}\)-design.
For \(\|\varphi\|=1\),
\[
p\text{ is a }\{1,\ldots,t\}\text{-design}
\quad\Longleftrightarrow\quad
p\text{ is a }\{0\}\cup\{t+1,\ldots,n\}\text{-code}.
\]
In particular, every QOA determines a rank-one projection with \(\esize(p)=D\).
\end{theorem}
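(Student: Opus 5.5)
The plan is to handle the three assertions of Theorem~\ref{thm:qoa-qas-design} in order, reducing each to vanishing of Weyl coefficients.

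\emph{Partial-trace criterion.} By Lemma~\ref{lem:spectral-vanishing}, \(\chi\) is a \(\{1,\ldots,t\}\)-design iff \(E_s(\chi)=0\) for \(1\le s\le t\). Since the \(W_\beta\) are trace-orthogonal, this holds iff \(\operatorname{Tr}(W_\beta^\ast\chi)=0\) whenever \(1\le\operatorname{wt}(\beta)\le t\).

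Fix \(\Lambda\) with \(|\Lambda|=t\). The Weyl operators \(W_\gamma\) with \(\gamma\in(\mathbb F_q^2)^\Lambda\) form a trace-orthogonal basis of \(\bigotimes_{j\in\Lambda}M_q(\C)\). The defining identity of the partial trace gives
\[
\operatorname{Tr}_{\Lambda^c}(\chi)=q^{-t}\sum_{\gamma}\operatorname{Tr}\bigl((W_\gamma^\ast\otimes I_{\Lambda^c})\chi\bigr)W_\gamma .
\]
Here \(W_\gamma\otimes I_{\Lambda^c}\) is, up to reordering the tensor factors, the Weyl operator \(W_\beta\) with \(\beta\) extending \(\gamma\) by zero. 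The \(\gamma=0\) coefficient is \(\operatorname{Tr}\chi\).

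So \(\operatorname{Tr}_{\Lambda^c}(\chi)\) equals \(q^{-t}(\operatorname{Tr}\chi)I_\Lambda\) iff \(\operatorname{Tr}(W_\beta^\ast\chi)=0\) for all nonzero \(\beta\) with \(\operatorname{supp}\beta\subseteq\Lambda\). Every \(\beta\) with \(1\le\operatorname{wt}\beta\le t\le n\) has support inside some \(t\)-subset \(\Lambda\). Conversely, every nonzero \(\beta\) supported in such a \(\Lambda\) has weight between \(1\) and \(t\). This gives the first equivalence. The case \(t=0\) is vacuous on both sides.

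\emph{QOA characterization.} By bilinearity,
\[
\sum_{j,l}\operatorname{Tr}_{\Lambda^c}(\varphi_j\varphi_l^\dagger)=R\operatorname{Tr}_{\Lambda^c}(p).
\]
Hence the QOA condition reads \(\operatorname{Tr}_{\Lambda^c}(p)=q^{-t}I_\Lambda\) for all \(|\Lambda|=t\).

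Taking the full trace of this condition gives \(\operatorname{Tr}p=\|\varphi\|^2=1\). This works for every \(t\), including \(t=0\), where \(\Lambda=\varnothing\) and the partial trace is the trace. Conversely, if \(\|\varphi\|=1\), the condition is exactly the partial-trace criterion above with \(\operatorname{Tr}p=1\). This requires \(p\ne0\), which holds since \(\|\varphi\|=1\).

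\emph{Code--design equivalence.} Let \(\|\varphi\|=1\), so \(p\) is a rank-one projection and \(K=1\). Proposition~\ref{prop:weyl-code-distributions} gives
\[
a_r=\sum_{\alpha\in G_r}|\langle\varphi,W_\alpha\varphi\rangle|^2,
\qquad
b_s=D\sum_{\beta\in G_s}|\langle\varphi,W_\beta^\ast\varphi\rangle|^2.
\]
The first uses \(\operatorname{Tr}(pW_\alpha pW_\alpha^\ast)=|\langle\varphi,W_\alpha\varphi\rangle|^2\), and the second uses \(\operatorname{Tr}(pW_\beta)=\langle\varphi,W_\beta\varphi\rangle\).

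Each \(W_\beta^\ast\) is a unimodular scalar multiple of \(W_{-\beta}\), and \(G_r=-G_r\). Therefore \(b_r=D\,a_r\) for every \(r\).

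Because \(V_r=\C B_r\) and \(U_s=\C E_s\), vanishing of the functionals \(A_r^p\) and \(B_r^p\) is equivalent to \(a_r=0\) and \(b_r=0\), respectively. Hence \(a_r=0\) for \(1\le r\le t\) iff \(b_r=0\) for \(1\le r\le t\). This is the code condition with \(\Delta=\{0\}\cup\{t+1,\ldots,n\}\), i.e.\ \(\delta=t+1\), against the design condition.

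Finally, Proposition~\ref{lem:distribution-normalization} gives \(\esize(p)=h_A(p)=D\operatorname{Tr}p=D\).

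The main point requiring care is the Weyl expansion of the partial trace in the first step. One must check that \(W_\gamma\otimes I_{\Lambda^c}\) matches \(W_\beta\) under the reordering convention, and that the normalization \(q^{-t}\) is correct. The remaining steps are direct substitutions.
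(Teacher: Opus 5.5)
Your proof is correct and follows the paper's argument: a Weyl-coefficient expansion of the partial trace, linearity of the partial trace for the QOA rows, and the rank-one identity for the code--design step. The only variation is that you phrase the last step as the distribution identity \(b_r=D\,a_r\), i.e.\ equality in the Shor--Laflamme--Rains inequality for \(K=1\), rather than the paper's operator identity \(pW_\alpha p=\operatorname{Tr}(pW_\alpha)p\); these amount to the same computation, and the detour through \(W_{-\beta}\) is unnecessary since \(|\langle\varphi,W_\beta^\ast\varphi\rangle|=|\langle\varphi,W_\beta\varphi\rangle|\) by conjugation.
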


\begin{proof}
We first express each partial-trace condition in Weyl coefficients.
Fix \(\Lambda\subseteq[n]\) with \(|\Lambda|=t\).
The Weyl operators on these factors form an orthogonal basis, whose weight-zero element is the identity.
Its coefficient in \(\operatorname{Tr}_{\Lambda^c}(\chi)\) is fixed by \(\operatorname{Tr}\chi\).
By the defining property of the partial trace, the reduced operator is therefore the required scalar identity exactly when
\[
\operatorname{Tr}(\chi W_\beta^\ast)=0
\qquad(\varnothing\ne\operatorname{supp}(\beta)\subseteq\Lambda).
\]
Every nonempty support of size at most \(t\) is contained in such a \(\Lambda\).
Thus all the partial-trace conditions hold exactly when
\[
\operatorname{Tr}(\chi W_\beta^\ast)=0
\qquad(1\le\operatorname{wt}(\beta)\le t).
\]
Equation~\eqref{eq:dual-distribution} gives
\[
B_s^\chi(E_s)=\frac{D}{\operatorname{Tr}(\chi^2)}
\sum_{\beta\in G_s}|\operatorname{Tr}(\chi W_\beta^\ast)|^2.
\]
Each sum vanishes exactly when its Weyl coefficients vanish.
Since \(U_s=\C E_s\), this is equivalent to \(B_s^\chi=0\) for \(1\le s\le t\), proving the first assertion.

For the row vectors in the theorem, the cross terms are collected by \(p\).
Linearity of the partial trace gives
\[
\sum_{j,l=1}^R\operatorname{Tr}_{\Lambda^c}(\varphi_j\varphi_l^\dagger)
=R\operatorname{Tr}_{\Lambda^c}(p).
\]
Taking the full trace of a QOA identity gives \(R\|\varphi\|^2=R\), hence \(\|\varphi\|=1\).
With this normalization, the displayed identity is exactly the partial-trace condition for \(p\).
Conversely, that condition and \(\|\varphi\|=1\) give the QOA identities.

To compare the rank-one design with the code condition, observe that
\[
pW_\alpha p=(\varphi^\dagger W_\alpha\varphi)p
=\operatorname{Tr}(pW_\alpha)p.
\]
Since \(p=p^\dagger\), we have \(\operatorname{Tr}(pW_\alpha^\ast)=\overline{\operatorname{Tr}(pW_\alpha)}\).
Hence the Weyl coefficients of weights \(1,\ldots,t\) vanish if and only if \(pW_\alpha p=0\) for those weights.
Proposition~\ref{prop:weyl-code-distributions}, with \(\delta=t+1\), gives the code equivalence.
Finally, \(p\) is a rank-one projection and \(h_A(p)=D\), so \(\esize(p)=D\).
\end{proof}

This pure-state correspondence is known for \(1\le t\le\lfloor n/2\rfloor\)~\cite[Propositions~2 and~3]{Scott2004}.
Thus a QOA satisfies both the design lower bound and the code upper bound.

\begin{proposition}\label{prop:qoa-bound}
Let \(t\in\{0,\ldots,n\}\).
The value \(\DelsDes(W,\{1,\ldots,t\})\) is the optimum of
\[
\begin{array}{ll}
\mathrm{minimize} & b_0,\\[1mm]
\mathrm{subject\ to} & b_s=0 \quad (1\le s\le t),\\
& b_s\ge0 \quad (0\le s\le n),\\
& \displaystyle\frac1{D^2}\sum_{s=0}^n\mathsf K_r(s)b_s\ge0 \quad (1\le r\le n),\\[3mm]
& \displaystyle\frac1{D^2}\sum_{s=0}^nb_s=1.
\end{array}
\]
With the normalization discussed at the end of Section~\ref{sec:classical-delsarte}, this is the Delsarte linear program for \(\{1,\ldots,t\}\)-designs in \(H(n,q^2)\)~\cite[Chapter~5]{Delsarte1973}.
Consequently, the existence of a \(\operatorname{QOA}(R,n,q,t)\) implies
\[
\DelsDes(W,\{1,\ldots,t\})
\le D
\le\DelsCode(W,\{0\}\cup\{t+1,\ldots,n\}).
\]
The resulting constraints on \(n,q,t\) are independent of \(R\).
\end{proposition}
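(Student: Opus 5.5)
The plan is to specialize the general design program~\eqref{def:design-conic-bound} to the Weyl realization, using the scalar structure of the ideals. By Proposition~\ref{prop:weyl-hamming-qas}, every ideal is one-dimensional:
\[
V_r=\C B_r,\qquad U_s=\C E_s .
\]
Also, \(B_r^\ast=B_r\) and \(E_s^\dagger=E_s\). Hence every feasible pair \((\mathsf A,\mathsf B)\) is determined by the scalars
\[
a_r:=\mathsf A_r(B_r),\qquad b_s:=\mathsf B_s(E_s).
\]
The involution compatibility makes these scalars real.

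Lemma~\ref{lem:weyl-hamming-cones} turns the cone constraints into \(a_r\ge0\) and \(b_s\ge0\). Evaluating \(\mathsf B=Q'\mathsf A\) at \(E_s\), via the expansions in Proposition~\ref{prop:weyl-hamming-qas}, gives \(b_s=\sum_r\mathsf K_s(r)a_r\). The inverse relation from Theorem~\ref{thm:macwilliams} gives
\[
a_r=D^{-2}\sum_s\mathsf K_r(s)b_s .
\]
So I will eliminate \(a\) entirely and keep only \(b\). The constraints \(a_r\ge0\) become the displayed inequalities. The inequality for \(r=0\) is automatic, since \(a_0\) equals \(D^{-2}\sum_sb_s=1\). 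Normalization \(\mathsf A_0(I)=1\) becomes \(D^{-2}\sum_sb_s=1\), because \(I=B_0=\sum_sE_s\). The design condition sets \(b_s=0\) for \(1\le s\le t\). The objective is \(\mathsf B_0(E_0)=b_0\). Conversely, any feasible \(b\) defines functionals satisfying~\eqref{eq:conic-feasibility} by the same formulas, so the optima coincide. This is exactly the argument of Corollary~\ref{cor:classical-programs} with \(v=D^2\) and the Krawtchouk eigenmatrices of \(H(n,q^2)\). Comparing with~\eqref{def:classical-design-program} and the renormalization remark at the end of Section~\ref{sec:classical-delsarte} identifies it with Delsarte's design program.

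For the inequalities, suppose a \(\operatorname{QOA}(R,n,q,t)\) exists. Theorem~\ref{thm:qoa-qas-design} produces a rank-one projection \(p\) with \(\esize(p)=D\). This \(p\) is simultaneously a \(\{1,\ldots,t\}\)-design and a \(\{0\}\cup\{t+1,\ldots,n\}\)-code. Theorem~\ref{thm:conic-bounds} then gives both bounds, \(\DelsDes\le D\) and \(D\le\DelsCode\). The edge cases need a check. For \(t=0\) the design set is empty, which is allowed since \(T\subseteq\mathcal S\setminus\{0\}\). For \(t=n\) the code set \(\Delta\) is \(\{0\}\), which still contains \(0\).

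Independence of \(R\) is immediate. Neither program involves \(R\), and the conclusion \(\DelsDes\le D\le\DelsCode\) depends only on \(n,q,t\). The main care point is the converse direction of the reduction, namely that scalar feasible points lift to feasible functional pairs. This is routine here because every ideal is spanned by a single self-adjoint basis element.
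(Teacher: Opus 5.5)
Your proposal is correct and follows the paper's proof. Lemma~\ref{lem:weyl-hamming-cones} and the Krawtchouk transform reduce the design program to the displayed linear program, with the \(r=0\) inequality coinciding with the normalization, and Theorem~\ref{thm:qoa-qas-design} with Theorem~\ref{thm:conic-bounds} gives the two inequalities; you spell out what the paper leaves implicit: the reality of the scalars, the inverse transform, and lifting feasible \(b\) back to feasible functional pairs.
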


\begin{proof}
The cone description in Lemma~\ref{lem:weyl-hamming-cones} and the Krawtchouk transform turn the design program~\eqref{def:design-conic-bound} into the displayed linear program.
At \(r=0\), the transform equals the normalization and hence is already positive.
If a QOA exists, Theorem~\ref{thm:qoa-qas-design} supplies a rank-one projection satisfying both the design and code conditions, with effective size \(D\).
The two inequalities now follow from Theorem~\ref{thm:conic-bounds}.
\end{proof}

\begin{remark}
The two QOA definitions differ even for classical rows.
Let \(t\in\{0,\ldots,n\}\) and \(\varnothing\ne C\subseteq\mathbb F_q^n\).
Put \(|x\rangle:=e_{x_1}\otimes\cdots\otimes e_{x_n}\) and \(x_\Lambda:=(x_j)_{j\in\Lambda}\) for \(x\in C\) and \(\Lambda\subseteq[n]\).
The diagonal projection
\[
\Pi=\sum_{x\in C}|x\rangle\langle x|
\]
is a \(\{1,\ldots,t\}\)-design if and only if \(C\) is a classical orthogonal array of strength \(t\), since its partial traces are diagonal with entries
\[
\bigl(\operatorname{Tr}_{\Lambda^c}(\Pi)\bigr)_{u,u}
=\bigl|\{x\in C:x_\Lambda=u\}\bigr|,
\qquad(u\in\mathbb F_q^\Lambda).
\]
For \(|\Lambda|=t\), the design condition requires these counts to equal \(|C|/q^t\)~\cite[Chapter~1]{HedayatSloaneStufken1999}.

For the definition with cross terms, put
\[
\varphi=\frac1{\sqrt{|C|}}\sum_{x\in C}|x\rangle.
\]
Its partial trace is
\[
\operatorname{Tr}_{\Lambda^c}(\varphi\varphi^\dagger)
=\frac1{|C|}
\sum_{\substack{x,y\in C\\x_{\Lambda^c}=y_{\Lambda^c}}}
|x_\Lambda\rangle\langle y_\Lambda|.
\]
The diagonal entries give the same counts divided by \(|C|\).
Every matrix-entry contribution has coefficient \(1/|C|>0\), so an off-diagonal entry is nonzero exactly when two distinct rows agree outside \(\Lambda\).
Thus every off-diagonal entry vanishes for every \(|\Lambda|=t\) if and only if
\[
\bigl|\{j\in[n]:x_j\ne y_j\}\bigr|\ge t+1
\qquad(x,y\in C,\ x\ne y).
\]
Consequently, the basis vectors indexed by \(C\) form a QOA if and only if \(C\) is an orthogonal array of strength \(t\) and its distinct rows have Hamming distance at least \(t+1\).
The distance condition says that deleting any \(t\) columns leaves pairwise distinct rows~\cite[Section~IV]{GoyenecheZyczkowski2014}.
The conditions on the classical rows use \(H(n,q)\).
For the associated rank-one projection, the quantum conic bounds use the eigenmatrices of \(H(n,q^2)\).
\end{remark}

\subsection{A scheme on \(M_n(\C)\)}\label{sec:matrix-example}

We construct a QAS on \(M_n(\C)\) whose codes and designs are described by matrix entries.
Both products are noncommutative for \(n\ge2\), and its conic bounds can be computed exactly.
Let \(n\ge1\) and \(A:=M_n(\C)\), with matrix entries indexed by \([n]\).
For the standard matrix units \(\varepsilon_{ab}\), define
\[
\mathsf R_{ab,cd}(\varepsilon_{uv})
:=\delta_{u,c}\delta_{v,d}\varepsilon_{ab}
\qquad(a,b,c,d,u,v\in[n]),
\]
and set
\[
W_n:=\operatorname{span}_{\C}
\{\mathsf R_{ab,cd}:a,b,c,d\in[n],\ a-b=c-d\}.
\]
Differences are taken in \(\mathbb Z\).
Here
\[
h_A(x)=n\operatorname{Tr}(x),\qquad E_0=n^{-2}J.
\]

The following identities establish the QAS axioms and determine its simple ideals.

\begin{lemma}\label{lem:matrix-difference-basic}
For \(a,b,c,d,e,f,g,h\in[n]\),
\[
\begin{aligned}
\mathsf R_{ab,cd}\circ\mathsf R_{ef,gh}
&=\delta_{c,e}\delta_{d,f}\mathsf R_{ab,gh},\\
\mathsf R_{ab,cd}\mathbin{\bullet}\mathsf R_{ef,gh}
&=\frac1n\delta_{d,g}\delta_{b,e}\mathsf R_{af,ch}.
\end{aligned}
\]
Also,
\[
\mathsf R_{ab,cd}^{\dagger}=\mathsf R_{cd,ab},
\qquad
\mathsf R_{ab,cd}^{\ast}=\mathsf R_{ba,dc},
\]
\[
I=\sum_{a,b\in[n]}\mathsf R_{ab,ab},
\qquad
J=n\sum_{a,c\in[n]}\mathsf R_{aa,cc}.
\]
The space \(W_n\) is a QAS, and for \(n\ge2\) it is neither commutative nor cocommutative.
\end{lemma}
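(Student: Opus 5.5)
The plan is to compute everything on matrix units, since the \(\mathsf R_{ab,cd}\) (for all \(a,b,c,d\)) form a basis of \(\End(A)\), and then check the closure and homogeneity conditions of Definition~\ref{def:qas} directly.

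\textbf{Composition, adjoints and \(I\).} The composition rule follows by evaluating both sides on \(\varepsilon_{uv}\). The operator \(\mathsf R_{ab,cd}\) is the rank-one map \(\varepsilon_{cd}\mapsto\varepsilon_{ab}\). The matrix units are orthogonal with \(\langle\varepsilon_{uv},\varepsilon_{u'v'}\rangle=n\delta\delta\), so they are orthogonal with equal norms, and this gives \(\mathsf R_{ab,cd}^\dagger=\mathsf R_{cd,ab}\). For the involution, \(\mathsf R_{ab,cd}^\ast(\varepsilon_{uv})=(\mathsf R_{ab,cd}(\varepsilon_{vu}))^\ast=\delta_{v,c}\delta_{u,d}\varepsilon_{ba}\), which equals \(\mathsf R_{ba,dc}(\varepsilon_{uv})\). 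The formula for \(I\) is immediate.

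\textbf{Schur product and \(J\).} Here I use the formula from the proof of Proposition~\ref{prop:weyl-hamming-qas},
\[
m^\dagger(\varepsilon_{ij})=\frac1n\sum_k\varepsilon_{ik}\otimes\varepsilon_{kj}.
\]
Then
\[
(\mathsf R_{ab,cd}\bullet\mathsf R_{ef,gh})(\varepsilon_{uv})=\frac1n\sum_k\delta_{u,c}\delta_{k,d}\delta_{k,g}\delta_{v,h}\,\varepsilon_{ab}\varepsilon_{ef},
\]
and \(\varepsilon_{ab}\varepsilon_{ef}=\delta_{b,e}\varepsilon_{af}\), which yields the stated rule. For \(J\), note \(h_A(\varepsilon_{uv})=n\delta_{uv}\), so \(J(\varepsilon_{uu})=n\sum_a\varepsilon_{aa}\) and \(J(\varepsilon_{uv})=0\) for \(u\ne v\). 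This gives \(J=n\sum_{a,c}\mathsf R_{aa,cc}\).

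\textbf{Closure of \(W_n\).} All displayed basis elements of \(I\) and \(J\) satisfy \(a-b=c-d\), so \(I,J\in W_n\). The operations preserve the difference condition:
\begin{itemize}
\item composition: \(a-b=c-d=g-h\);
\item Schur product: with \(b=e\) and \(d=g\), \((a-f)=(a-b)+(e-f)=(c-d)+(g-h)=c-h\);
\item \(\dagger\): the condition is symmetric in the two pairs;
\item \(\ast\): it becomes \(b-a=d-c\).
\end{itemize}

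\textbf{Homogeneity.} This is the main step. The Schur rule shows that \((W_n,\bullet)\) is a groupoid-type algebra. Write \(\mathsf R_{ab,cd}\) as a pair \(\bigl((a,c),(b,d)\bigr)\) with composition on the middle indices. For a fixed difference \(k=a-b=c-d\), the map \(\mathsf R_{ab,cd}\mapsto\sqrt{?}\,\)(matrix unit) identifies \(W_n\) with \(\bigoplus_{\text{classes}}\)(full matrix algebras) on index sets of pairs \((a,c)\) with \(a-c\) fixed. Indeed, \(\mathsf R_{ab,cd}\) with \(a-c=b-d=m\) links the pair \((a,c)\) to \((b,d)\), both of difference \(m\). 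Thus
\[
(W_n,\bullet)\cong\bigoplus_{m}M_{n-|m|}(\C),
\]
with the scaling by \(n\) absorbed (\(n\,\mathsf R\) behaves as matrix units).

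Inside this decomposition, \(I=\sum_{a,b}\mathsf R_{ab,ab}\) lies in the \(m=0\) summand, where \(a=c\) and \(b=d\). There it equals \(\tfrac1n\sum_{a,b}(n\mathsf R_{ab,ab})\), which is \(\tfrac1n\) times the all-ones-type element \(\sum_{a,b}\) unit\(_{a,b}\) of \(M_n(\C)\). That element is \(n\) times a rank-one projection, so \(I\) is a minimal projection. Combined with the identity \(p\bullet I=I\bullet p=p\), which forces \(p\le I\), this gives the QAS condition. I expect getting the constants in this identification right to be the main obstacle, and I will check it by confirming \(I\bullet I=I\).

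\textbf{Noncommutativity.} For \(n\ge2\), composition is noncommutative because \(\mathsf R_{11,11}\circ\mathsf R_{11,22}=\mathsf R_{11,22}\) while \(\mathsf R_{11,22}\circ\mathsf R_{11,11}=0\). The Schur product is noncommutative because \(\mathsf R_{11,11}\bullet\mathsf R_{12,12}=\tfrac1n\mathsf R_{12,12}\) while \(\mathsf R_{12,12}\bullet\mathsf R_{11,11}=0\).
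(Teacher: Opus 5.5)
Your proposal is correct. Your computations of composition, both involutions, the Schur product, $I$, $J$ and closure match the paper's. Your noncommutativity witnesses also work: each product is nonzero in one order and zero in the other, whereas the paper uses two nonzero but different products.

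The real difference is homogeneity. The paper cites Gromada's result that every quantum coherent algebra on $M_n(\C)$ is homogeneous, so simplicity of $A$ does all the work and no structure of $W_n$ is needed. You verify it directly. The Schur rule makes $n\mathsf R_{ab,cd}\mapsto$ (matrix unit indexed by $\bigl((a,c),(b,d)\bigr)$) an isomorphism $(W_n,\bullet)\cong\bigoplus_{|m|<n}M_{n-|m|}(\C)$ with $m=a-c=b-d$. Under it, $I$ becomes $\frac1n$ times the all-ones matrix in the $m=0$ block, a rank-one projection. Then $p=p\bullet I=I\bullet p$ first forces $p$ into that block and then $p\in\{0,I\}$.

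This is self-contained. It is exactly the decomposition into the ideals $V_r$, with matrix units $n\mathsf F^{(r)}_{ij}$ and $I=\sum_{i,j}\mathsf F^{(0)}_{ij}$, that the paper derives right after the lemma. So your route folds that later discussion into the proof, while the citation is shorter and covers any quantum coherent algebra on a matrix algebra.

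When you write it up, make three cleanups:
\begin{enumerate}
\item Drop the placeholder ``$\sqrt{?}$''.
\item Drop the phrase ``for a fixed difference $k=a-b=c-d$''. The blocks are indexed by $m=a-c$, and $k$ is not constant on a block.
\item State explicitly that $\mathsf R_{ab,cd}^\ast=\mathsf R_{ba,dc}$, together with conjugate-linearity of $\ast$, makes your identification a $\ast$-isomorphism onto conjugate transposition. The QAS condition concerns projections with $p=p^\ast$, so this is needed.
\end{enumerate}
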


\begin{proof}
The composition and involution formulas, and the expressions for \(I,J\), follow by evaluation on the matrix units.
For the Schur product, substitute
\[
m^{\dagger}(\varepsilon_{uv})
=\frac1n\sum_{k\in[n]}\varepsilon_{uk}\otimes\varepsilon_{kv}
\]
into \(S\mathbin{\bullet}T=m(S\otimes T)m^{\dagger}\) and use \(\varepsilon_{ab}\varepsilon_{ef}=\delta_{b,e}\varepsilon_{af}\).
Each nonzero product satisfies the defining difference condition: for the Schur product, \(b=e\) and \(d=g\) give \(a-f=(a-b)+(e-f)=(c-d)+(g-h)=c-h\).
The formulas therefore show that \(W_n\) contains \(I,J\) and is closed under both products and involutions, so it is a quantum coherent algebra.
Every quantum coherent algebra on \(M_n(\C)\) is homogeneous by~\cite[Proposition~2.10]{Gromada2024}, hence \(W_n\) is a QAS.
For \(n\ge2\),
\[
\mathsf R_{11,22}\circ\mathsf R_{22,11}
=\mathsf R_{11,11}\ne\mathsf R_{22,22}
=\mathsf R_{22,11}\circ\mathsf R_{11,22},
\]
whereas
\[
\mathsf R_{12,12}\mathbin{\bullet}\mathsf R_{21,21}
=\frac1n\mathsf R_{11,11}\ne\frac1n\mathsf R_{22,22}
=\mathsf R_{21,21}\mathbin{\bullet}\mathsf R_{12,12}.
\qedhere
\]
\end{proof}

For composition and the quantum Schur product, group the basis elements by \(a-b\) and \(a-c\), respectively.
For an integer \(r\) with \(|r|<n\), put
\[
D_r:=\{i\in[n]:i-r\in[n]\},
\qquad |D_r|=n-|r|.
\]
For \(i,j\in D_r\), set
\[
\mathsf E_{ij}^{(r)}:=\mathsf R_{i,i-r,j,j-r},
\qquad
\mathsf F_{ij}^{(r)}:=\mathsf R_{i,j,i-r,j-r},
\]
and let
\[
U_r:=\operatorname{span}_{\C}\{\mathsf E_{ij}^{(r)}:i,j\in D_r\},
\qquad
V_r:=\operatorname{span}_{\C}\{\mathsf F_{ij}^{(r)}:i,j\in D_r\}.
\]
The defining equality \(a-b=c-d\) gives the unique expressions
\[
\mathsf R_{ab,cd}
=\mathsf E_{a,c}^{(a-b)}
=\mathsf F_{a,b}^{(a-c)}.
\]
By Lemma~\ref{lem:matrix-difference-basic}, products between distinct \(U_r\)'s vanish under \(\circ\), and products between distinct \(V_s\)'s vanish under \(\bullet\).
Within a summand,
\[
\begin{aligned}
\mathsf E_{ij}^{(r)}\circ\mathsf E_{k\ell}^{(r)}
&=\delta_{j,k}\mathsf E_{i\ell}^{(r)},
&
(\mathsf E_{ij}^{(r)})^\dagger&=\mathsf E_{ji}^{(r)},\\
\mathsf F_{ij}^{(r)}\mathbin{\bullet}\mathsf F_{k\ell}^{(r)}
&=\frac1n\delta_{j,k}\mathsf F_{i\ell}^{(r)},
&
(\mathsf F_{ij}^{(r)})^\ast&=\mathsf F_{ji}^{(r)}.
\end{aligned}
\]
Thus \(U_r\) and \(V_r\) are simple ideals, with matrix units \(\mathsf E_{ij}^{(r)}\) and \(n\mathsf F_{ij}^{(r)}\), respectively, and
\[
(W_n,\circ,\dagger)\cong\bigoplus_{|r|<n}M_{n-|r|}(\C),
\qquad
(W_n,\bullet,\ast)\cong\bigoplus_{|s|<n}M_{n-|s|}(\C).
\]
In particular, \(E_0\in U_0\) and \(I\in V_0\), since
\[
E_0=\frac1n\sum_{i,j\in[n]}\mathsf E_{ij}^{(0)},
\qquad
I=\sum_{i,j\in[n]}\mathsf F_{ij}^{(0)}.
\]

The two expressions for \(\mathsf R_{ab,cd}\) give the transition maps
\[
P_{r,s}(\mathsf F_{ij}^{(s)})=
\begin{cases}
\mathsf E_{i,i-s}^{(r)},&r=i-j,\\
0,&r\ne i-j
\end{cases}
\qquad(i,j\in D_s),
\]
\[
Q_{s,r}(\mathsf E_{ij}^{(r)})=
\begin{cases}
n^2\mathsf F_{i,i-r}^{(s)},&s=i-j,\\
0,&s\ne i-j
\end{cases}
\qquad(i,j\in D_r).
\]
The factor \(n^2=\dim A\) is the normalization of \(Q\).
The distributions on these matrix units determine the code and design conditions.

\begin{proposition}\label{prop:matrix-difference-distributions}
For \(\chi=(\chi_{ab})\in A_+\setminus\{0\}\), the distributions are
\[
\begin{aligned}
A_s^\chi(\mathsf F_{ij}^{(s)})
&=\frac{\overline{\chi_{ij}}\chi_{i-s,j-s}}{\operatorname{Tr}(\chi^2)}
&& (i,j\in D_s),\\
B_r^\chi(\mathsf E_{ij}^{(r)})
&=\frac{n^2\overline{\chi_{i,i-r}}\chi_{j,j-r}}{\operatorname{Tr}(\chi^2)}
&& (i,j\in D_r).
\end{aligned}
\]
\end{proposition}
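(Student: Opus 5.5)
This is a direct computation from Definition~\ref{def:inner-distribution}. I will first record the inner product on \(A=M_n(\C)\), namely \(\langle x,y\rangle=h_A(x^\ast y)=n\operatorname{Tr}(x^\ast y)\). With \(x=\chi\) this gives \(h_A(\chi^2)=n\operatorname{Tr}(\chi^2)\). Then I will compute \(\mathsf R_{ab,cd}(\chi)\) by expanding \(\chi=\sum_{u,v}\chi_{uv}\varepsilon_{uv}\). Since \(\mathsf R_{ab,cd}\) picks out the \((c,d)\) entry and places it at \((a,b)\), we get \(\mathsf R_{ab,cd}(\chi)=\chi_{cd}\,\varepsilon_{ab}\). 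Hence
\[
\langle\chi,\mathsf R_{ab,cd}(\chi)\rangle
=n\chi_{cd}\operatorname{Tr}(\chi^\ast\varepsilon_{ab})
=n\chi_{cd}\overline{\chi_{ab}},
\]
using \(\operatorname{Tr}(\chi^\ast\varepsilon_{ab})=(\chi^\ast)_{ba}=\overline{\chi_{ab}}\). Positivity of \(\chi\) is only needed for \(\chi^\ast=\chi\); since the conjugate already appears explicitly, the formula holds as stated.

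Next I substitute the two parametrizations. For \(\mathsf F_{ij}^{(s)}=\mathsf R_{i,j,i-s,j-s}\), the inner distribution becomes
\[
\frac{n\,\overline{\chi_{ij}}\chi_{i-s,j-s}}{n\operatorname{Tr}(\chi^2)},
\]
and the factors of \(n\) cancel, giving the first formula. For \(\mathsf E_{ij}^{(r)}=\mathsf R_{i,i-r,j,j-r}\), the dual distribution carries the extra factor \(N=\dim A=n^2\) from \eqref{eq:dual-distribution}. It becomes
\[
\frac{n^2\cdot n\,\overline{\chi_{i,i-r}}\chi_{j,j-r}}{n\operatorname{Tr}(\chi^2)},
\]
which is the second formula.

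Both \(A^\chi_s\) and \(B^\chi_r\) are complex linear, so these values on the basis elements determine them completely. The only point needing care is the bookkeeping of the trace normalization \(h_A=n\operatorname{Tr}\) against \(N=n^2\). I also need to check that the matrix unit indices \((a,b),(c,d)\) of each \(\mathsf E\) and \(\mathsf F\) are matched to the correct positions, so that the conjugated entry is the \((a,b)\) one and the unconjugated entry is the \((c,d)\) one. I expect no genuine obstacle.
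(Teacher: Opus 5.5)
Your proposal is correct and follows the paper's own argument: compute \(\mathsf R_{ab,cd}(\chi)=\chi_{cd}\varepsilon_{ab}\) (the paper states this directly for \(\mathsf F_{ij}^{(s)}\) and \(\mathsf E_{ij}^{(r)}\)) and substitute \(h_A(\chi^2)=n\operatorname{Tr}(\chi^2)\) and \(N=n^2\) into the definitions. Your index bookkeeping, including which entry is conjugated, checks out.
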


\begin{proof}
The identities \(\mathsf F_{ij}^{(s)}(\chi)=\chi_{i-s,j-s}\varepsilon_{ij}\) and \(\mathsf E_{ij}^{(r)}(\chi)=\chi_{j,j-r}\varepsilon_{i,i-r}\) give both numerators.
Using \(h_A(\chi^2)=n\operatorname{Tr}(\chi^2)\) and \(N=n^2\) in \eqref{eq:inner-distribution} and \eqref{eq:dual-distribution} gives the stated formulas.
\end{proof}

Code conditions separate the nonzero diagonal entries; design conditions remove specified off-diagonal entries.
For integers \(1\le\delta\le n\) and \(0\le t\le n-1\), set
\[
\Delta_\delta:=\{0\}\cup\{s\in\mathbb Z:\delta\le|s|<n\},
\qquad
T_t:=\{r\in\mathbb Z:1\le|r|\le t\}.
\]

\begin{proposition}\label{prop:matrix-difference-code-design}
Let \(\chi=(\chi_{ab})\in A_+\setminus\{0\}\).
It is a \(\Delta_\delta\)-code if and only if
\[
\chi_{aa}\chi_{bb}=0
\qquad(a,b\in[n],\ 0<|a-b|<\delta).
\]
It is a \(T_t\)-design if and only if
\[
\chi_{ab}=0
\qquad(a,b\in[n],\ 1\le|a-b|\le t).
\]
\end{proposition}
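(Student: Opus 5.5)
The plan is to read both conditions off the explicit formulas in Proposition~\ref{prop:matrix-difference-distributions}. Since the \(\mathsf F_{ij}^{(s)}\) span \(V_s\) and the \(\mathsf E_{ij}^{(r)}\) span \(U_r\), the functional \(A_s^\chi\) or \(B_r^\chi\) vanishes exactly when it vanishes on all these matrix units. The only extra input needed is positivity of \(\chi\). Specifically, \(\chi_{aa}\ge0\), and \(|\chi_{ab}|^2\le\chi_{aa}\chi_{bb}\) because every \(2\times2\) principal minor of \(\chi\succeq0\) is nonnegative. Consequently, if \(\chi_{aa}=0\) then the whole \(a\)-th row and column of \(\chi\) vanish.

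\emph{Design condition.} By definition, \(\chi\) is a \(T_t\)-design iff \(B_r^\chi=0\) for all \(1\le|r|\le t\). For such \(r\) and \(i\in D_r\), evaluating at the diagonal unit gives
\[
B_r^\chi(\mathsf E_{ii}^{(r)})=\frac{n^2|\chi_{i,i-r}|^2}{\operatorname{Tr}(\chi^2)}.
\]
Hence \(B_r^\chi=0\) forces \(\chi_{i,i-r}=0\) for all \(i\in D_r\). Conversely, if all these entries vanish, then every product \(\overline{\chi_{i,i-r}}\chi_{j,j-r}\) vanishes, so \(B_r^\chi=0\). As \(i\) ranges over \(D_r\) and \(r\) over \(1\le|r|\le t\), the pairs \((a,b)=(i,i-r)\) range over exactly the pairs with \(1\le|a-b|\le t\). (Alternatively one could invoke Lemma~\ref{lem:spectral-vanishing}, but the direct computation is shorter.)

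\emph{Code condition.} The indices outside \(\Delta_\delta\) are the \(s\) with \(0<|s|<\delta\). For such \(s\), \(A_s^\chi=0\) means
\[
\overline{\chi_{ij}}\chi_{i-s,j-s}=0\qquad(i,j\in D_s).
\]
Taking \(i=j\) gives \(\chi_{ii}\chi_{i-s,i-s}=0\); with \((a,b)=(i,i-s)\) these are exactly the stated conditions for \(0<|a-b|<\delta\). For the converse, fix \(i,j\in D_s\). The hypothesis with \(a=i\), \(b=i-s\) gives \(\chi_{ii}=0\) or \(\chi_{i-s,i-s}=0\). In the first case row \(i\) of \(\chi\) vanishes, so \(\chi_{ij}=0\). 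In the second case row \(i-s\) vanishes, so \(\chi_{i-s,j-s}=0\). Either way the product is zero, hence \(A_s^\chi=0\).

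The only step beyond bookkeeping is this converse for codes. The stated condition involves only diagonal entries, while the inner distribution also sees off-diagonal ones, so the row-vanishing consequence of positivity is essential there. I would present that fact first and then dispatch both equivalences.
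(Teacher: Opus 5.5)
Your proposal is correct and follows the paper's proof essentially step for step. Both read the equivalences off the matrix-unit formulas of Proposition~\ref{prop:matrix-difference-distributions}: the diagonal units \(\mathsf F_{ii}^{(s)}\) and \(\mathsf E_{ii}^{(r)}\) give the forward implications, and the consequence \(|\chi_{ab}|^2\le\chi_{aa}\chi_{bb}\) of positivity (a zero diagonal entry kills its row and column) gives the converse for codes.
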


\begin{proof}
For \(0<|a-b|<\delta\), Proposition~\ref{prop:matrix-difference-distributions} gives
\[
A_{a-b}^\chi(\mathsf F_{aa}^{(a-b)})
=\frac{\chi_{aa}\chi_{bb}}{\operatorname{Tr}(\chi^2)}.
\]
Thus the code condition implies the stated diagonal-entry condition.
For the converse, positivity gives \(|\chi_{ij}|^2\le\chi_{ii}\chi_{jj}\), so a zero diagonal entry forces its row and column to vanish.
If \(0<|s|<\delta\) and \(i,j\in D_s\), the condition \(\chi_{ii}\chi_{i-s,i-s}=0\) therefore implies \(\overline{\chi_{ij}}\chi_{i-s,j-s}=0\).
Hence \(A_s^\chi\) vanishes on every matrix unit, proving the code condition.
For designs, the values \(B_r^\chi(\mathsf E_{ii}^{(r)})\) are positive multiples of \(|\chi_{i,i-r}|^2\); their vanishing forces every other value on the matrix units to vanish.
Thus
\[
B_r^\chi=0
\quad\Longleftrightarrow\quad
\chi_{i,i-r}=0\quad(i\in D_r).
\qedhere
\]
\end{proof}

The conic bounds use the following positivity conditions.

\begin{lemma}\label{lem:matrix-difference-cones}
For integers \(r,s\) with \(|r|,|s|<n\),
\[
\begin{gathered}
K_r^+
=\left\{\sum_{i,j\in D_r}b_{ij}\mathsf E_{ij}^{(r)}
:(b_{ij})_{i,j\in D_r}\succeq0\right\},\\
\left\{\sum_{i,j\in D_s}c_{ij}\mathsf F_{ij}^{(s)}
:(c_{ij})_{i,j\in D_s}\succeq0\right\}
\subseteq R_s^+.
\end{gathered}
\]
\end{lemma}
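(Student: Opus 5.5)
The plan is to compute both quadratic forms directly on matrix units, using the action formulas
\[
\mathsf E_{ij}^{(r)}(\xi)=\xi_{j,j-r}\,\varepsilon_{i,i-r},\qquad
\mathsf F_{ij}^{(s)}(\chi)=\chi_{i-s,j-s}\,\varepsilon_{ij}
\]
from the proof of Proposition~\ref{prop:matrix-difference-distributions}, together with \(h_A(\varepsilon_{ab}^\ast\varepsilon_{cd})=n\,\delta_{a,c}\delta_{b,d}\). In each case I will write the element as an explicit coefficient matrix, reduce self-adjointness to Hermitianness of that matrix, and reduce the defining inequality to a standard positivity fact about matrices.

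For the equality describing \(K_r^+\), I write a general \(u\in U_r\) as \(u=\sum_{i,j\in D_r}b_{ij}\mathsf E_{ij}^{(r)}\). Since \((\mathsf E_{ij}^{(r)})^\dagger=\mathsf E_{ji}^{(r)}\), the condition \(u^\dagger=u\) says exactly that \((b_{ij})\) is Hermitian. For \(\xi\in A\), the action formula gives
\[
\langle\xi,u(\xi)\rangle=n\sum_{i,j\in D_r}b_{ij}\,\overline{\xi_{i,i-r}}\,\xi_{j,j-r}.
\]
As \(\xi\) ranges over \(A=M_n(\C)\), the vector \((\xi_{j,j-r})_{j\in D_r}\) ranges over all of \(\C^{D_r}\), because these are distinct entries of \(\xi\). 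Hence the form is nonnegative for all \(\xi\) if and only if \((b_{ij})\succeq0\), which proves both inclusions. Alternatively, one could use that \(K_r^+\) is the positive cone of the \(C^\ast\)-algebra \(U_r\cong M_{|D_r|}(\C)\) by Proposition~\ref{prop:distribution-positivity}, with the \(\mathsf E_{ij}^{(r)}\) as matrix units; the direct computation avoids that identification.

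For the inclusion into \(R_s^+\), I take \(C=(c_{ij})_{i,j\in D_s}\succeq0\) and put \(x=\sum c_{ij}\mathsf F_{ij}^{(s)}\). Using \((\mathsf F_{ij}^{(s)})^\ast=\mathsf F_{ji}^{(s)}\) and \(C^\dagger=C\), I get \(x^\ast=x\). For \(\chi\in A_+\),
\[
\langle\chi,x(\chi)\rangle=n\sum_{i,j\in D_s}c_{ij}\,\overline{\chi_{ij}}\,\chi_{i-s,j-s}.
\]
Let \(\chi_D:=(\chi_{ij})_{i,j\in D_s}\) and \(\chi'_D:=(\chi_{i-s,j-s})_{i,j\in D_s}\). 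Both are principal submatrices of \(\chi\), the second after the index shift \(i\mapsto i-s\), so both are positive semidefinite. Since \(\chi\) is Hermitian, \(\overline{\chi_{ij}}=\chi_{ji}\), and the sum equals \(\operatorname{Tr}\bigl(\chi_D\,(C\circ\chi'_D)\bigr)\), where \(\circ\) denotes the entrywise product. By the Schur product theorem, \(C\circ\chi'_D\succeq0\). The trace of a product of two positive semidefinite matrices is nonnegative, so the quadratic form is nonnegative and \(x\in R_s^+\).

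The only delicate step is recognizing the second quadratic form as such a trace. The coupling between \(\chi_{ij}\) and the shifted entry \(\chi_{i-s,j-s}\) is what makes the Schur product theorem necessary. It also explains why only an inclusion is claimed: \(R_s^+\) tests only \(\chi\in A_+\), so it may contain elements whose coefficient matrix \(C\) is not positive semidefinite.
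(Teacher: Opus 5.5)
Your proof is correct and is essentially the paper's argument: for \(K_r^+\) the paper simply notes that the \(\mathsf E_{ij}^{(r)}\) act as matrix units on \(\operatorname{span}_{\C}\{\varepsilon_{i,i-r}:i\in D_r\}\) and vanish on its orthogonal complement, and your explicit quadratic-form computation spells this out. For \(R_s^+\) the paper writes \(\chi_{ab}=\langle v_a,v_b\rangle\), observes that \(Z=(\overline{\chi_{ij}}\chi_{i-s,j-s})_{i,j\in D_s}\) is the Gram matrix of the vectors \(\overline{v_i}\otimes v_{i-s}\), and pairs \(Z\) with \(\overline{M}\succeq0\); this is the Schur product theorem proved inline, so your grouping as \(\operatorname{Tr}\bigl(\chi_D(C\circ\chi'_D)\bigr)\) differs only in which two of the three factors are multiplied entrywise first.
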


\begin{proof}
The \(\mathsf E_{ij}^{(r)}\) act as matrix units on \(\operatorname{span}_{\C}\{\varepsilon_{i,i-r}:i\in D_r\}\) and vanish on its orthogonal complement, proving the formula for \(K_r^+\).
Let \(M=(m_{ij})_{i,j\in D_s}\succeq0\) and \(x:=\sum_{i,j\in D_s}m_{ij}\mathsf F_{ij}^{(s)}\).
Then \(x^\ast=x\).
To verify the quadratic inequality defining \(R_s^+\), write \(\chi_{ab}=\langle v_a,v_b\rangle\) with \(v_a\in\C^n\).
The vectors \(\overline{v_i}\otimes v_{i-s}\) have Gram matrix
\[
Z:=\bigl(\overline{\chi_{ij}}\chi_{i-s,j-s}\bigr)_{i,j\in D_s},
\]
so \(Z\succeq0\).
Since \(\overline M\succeq0\),
\[
\langle\chi,x(\chi)\rangle
=n\operatorname{Tr}(\overline MZ)\ge0.
\]
Thus \(x\in R_s^+\).
\end{proof}

\begin{proposition}\label{prop:matrix-difference-bounds}
For \(1\le\delta\le n\) and \(0\le t\le n-1\), the conic bounds are
\begin{gather}
\DelsCode(W_n,\Delta_\delta)
=n\left\lceil\frac n\delta\right\rceil,\label{eq:wn-code-bound}\\*
\DelsDes(W_n,T_t)
=\frac{n}{\left\lceil n/(t+1)\right\rceil}.
\label{eq:wn-design-bound}
\end{gather}
Every \(\Delta_\delta\)-code projection \(p\) satisfies
\[
\operatorname{rank}(p)\le\left\lceil\frac n\delta\right\rceil,
\]
and equality is attained.
The minimum effective size of a \(T_t\)-design is
\[
\min\{\esize(\chi):\chi\in A_+\setminus\{0\},\
\chi\text{ is a }T_t\text{-design}\}=n.
\]
\end{proposition}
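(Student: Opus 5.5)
The plan is to reduce each conic program to a Lovász-theta-type problem on the \(n\times n\) matrix indexed by the diagonal units, using the explicit transition maps and Lemma~\ref{lem:matrix-difference-cones}. For a feasible pair \((\mathsf A,\mathsf B)\), put
\[
M:=\bigl(\mathsf A_0(\mathsf F_{ij}^{(0)})\bigr)_{i,j\in[n]},\qquad
X:=\bigl(x_{ac}\bigr)_{a,c\in[n]},\quad x_{ac}:=\mathsf A_{a-c}(\mathsf F_{aa}^{(a-c)}).
\]

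\textbf{Step 1: translate the constraints into conditions on \(M\) and \(X\).}

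The relation \(\mathsf B=Q'\mathsf A\) together with the formula for \(Q_{s,r}\) gives
\[
\mathsf B_r(\mathsf E_{ij}^{(r)})=n^2\mathsf A_{i-j}(\mathsf F_{i,i-r}^{(i-j)}).
\]
From this I expect the following consequences.

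\begin{itemize}
\item \(K_0^+\)-positivity together with Lemma~\ref{lem:matrix-difference-cones} makes \(X\succeq0\), possibly up to conjugation.
\item Taking diagonals of the \(\mathsf B_r\) shows \(M_{i,i-r}\ge0\) for all \(r\). Together with \(\mathsf A_0(I)=1\), this gives \(\sum_{i,j}M_{ij}=1\) with nonnegative off-diagonal entries.
\item \(M\) is Hermitian and positive semidefinite (up to conjugation). This follows from the inclusion of the PSD-coefficient cone in \(R_0^+\).
\item \(\operatorname{diag}M=\operatorname{diag}X\).
\item Each \(x_{ac}\) is a diagonal entry of a PSD coefficient matrix for \(\mathsf A_{a-c}\), hence \(x_{ac}\ge0\).
\item Using \(J=n\sum_{a,c}\mathsf R_{aa,cc}\) and \(\mathsf R_{aa,cc}=\mathsf F_{aa}^{(a-c)}\), both objectives equal \(n\,\mathbf 1^{T}X\mathbf 1\).
\end{itemize}

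\textbf{Step 2: the code bound.}

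Since \(M\) has nonnegative off-diagonal entries summing with the diagonal to \(1\), we get \(\operatorname{tr}X=\operatorname{tr}M\le1\). The code constraints give \(x_{ac}=0\) for \(0<|a-c|<\delta\). Now let \(G\) be the graph on \([n]\) with edges \(0<|a-c|<\delta\). This is an interval, hence perfect, graph with \(\alpha(G)=\lceil n/\delta\rceil\). Lovász's theta bound, valid for Hermitian PSD matrices, then gives
\[
\mathbf 1^{T}X\mathbf 1\le\vartheta(G)\operatorname{tr}X=\lceil n/\delta\rceil\operatorname{tr}X\le\lceil n/\delta\rceil.
\]

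For projections, the code condition of Proposition~\ref{prop:matrix-difference-code-design} forces the support of the diagonal to be independent in \(G\). Since a zero diagonal entry kills its row and column, the rank is at most the size of that support, which is at most \(\lceil n/\delta\rceil\). The diagonal projection onto positions \(1,1+\delta,1+2\delta,\dots\) attains this, with \(\esize=n\lceil n/\delta\rceil\). Theorem~\ref{thm:conic-bounds} then gives equality in \eqref{eq:wn-code-bound}.

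\textbf{Step 3: the design bound.}

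The constraint \(\mathsf B_r=0\) for \(1\le|r|\le t\) gives \(M_{i,i-r}=0\), taking diagonals of \(\mathsf B_r\). So \(M\) is PSD, vanishes on the band \(1\le|i-j|\le t\), and has total entry sum \(1\). Applying the theta bound to the graph with band \(t\), whose independence number is \(\alpha':=\lceil n/(t+1)\rceil\), gives \(1\le\alpha'\operatorname{tr}M\). Then entrywise nonnegativity of \(X\) yields
\[
\mathbf 1^{T}X\mathbf 1\ge\operatorname{tr}X=\operatorname{tr}M\ge1/\alpha'.
\]

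For attainment, let \(v\) be the indicator of \(\alpha'\) positions spaced \(t+1\) apart. Set \(\mathsf A_s=0\) for \(s\ne0\), and let \(\mathsf A_0\) be the inner distribution functional of \(\chi=vv^{T}\), so that \(M=\alpha'^{-2}vv^{T}\). Then:

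\begin{itemize}
\item \(\mathsf A_0\) is nonnegative on all of \(R_0^+\), because it is an actual distribution.
\item Each \(\mathsf B_r\) becomes a diagonal matrix with nonnegative entries \(n^2M_{i,i-r}\), and these vanish on the band.
\item The objective equals \(n\operatorname{tr}M=n/\alpha'\).
\end{itemize}

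This establishes \eqref{eq:wn-design-bound}.

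\textbf{Step 4: the minimum effective size of a design.}

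For \(\chi\succeq0\), we have \(\operatorname{Tr}(\chi^2)\le(\operatorname{Tr}\chi)^2\), so
\[
\esize(\chi)=\frac{n(\operatorname{Tr}\chi)^2}{\operatorname{Tr}(\chi^2)}\ge n.
\]
Equality holds for rank-one \(\chi\), for example \(\chi=\varepsilon_{11}\), which is a \(T_t\)-design by Proposition~\ref{prop:matrix-difference-code-design}.

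\textbf{Main obstacle.}

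I expect the hardest part to be Step 1: carefully tracking conjugations and index shifts so that \(M\) and \(X\) are genuinely PSD with the claimed sign patterns. The point that needs the most care is that the off-diagonal entries of \(M\) coincide with diagonal entries of the \(\mathsf B_r\), which is exactly what forces \(\operatorname{tr}M\le1\).
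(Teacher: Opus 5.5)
Your proposal is correct and is essentially the paper's proof: your $M$ and $X$ are the transposes of the paper's $X_0$ and of its matrix $M$ representing $n^{-2}\mathsf B_0$. You derive the same nonnegativity, normalization, band-vanishing and objective identities, and you use the same attaining families. The differences are cosmetic. Where you invoke $\vartheta(G)=\alpha(G)$ for the perfect band graph, the paper proves the needed inequality \eqref{eq:wn-interval-estimate} directly, by covering $[n]$ with consecutive cliques and applying Cauchy--Schwarz (the clique-cover bound on $\vartheta$). You also obtain the rank bound combinatorially from the diagonal support, whereas the paper derives it from $\esize(p)=n\operatorname{rank}(p)$ and the code bound.
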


\begin{proof}
We first derive a matrix estimate common to both conic programs, then construct feasible families attaining the bounds.
For a feasible pair \((\mathsf A,\mathsf B)\), with \(\mathsf B=Q'\mathsf A\), put
\[
X_s:=\bigl(\mathsf A_s(\mathsf F_{ji}^{(s)})\bigr)_{i,j\in D_s}
\qquad(|s|<n).
\]
The formula for \(Q\) gives
\begin{equation}\label{eq:wn-transition}
(Q'\mathsf A)_r(\mathsf E_{ij}^{(r)})
=n^2(X_{i-j})_{i-r,i}
\qquad(i,j\in D_r).
\end{equation}
The involution condition makes \(X_s\) Hermitian.
Since
\[
\mathsf A_s\!\left(\sum_{i,j\in D_s}c_{ij}\mathsf F_{ij}^{(s)}\right)
=\operatorname{Tr}(X_sC)
\qquad\bigl(C=(c_{ij})_{i,j\in D_s}\bigr),
\]
Lemma~\ref{lem:matrix-difference-cones} and self-duality of the positive semidefinite cone give \(X_s\succeq0\)~\cite[Section~2.4]{BoydVandenberghe2004}.
The MacWilliams relation, normalization, and positivity of \(\mathsf B\) also give
\[
(X_0)_{ij}
=n^{-2}\mathsf B_{j-i}(\mathsf E_{jj}^{(j-i)})\ge0,
\qquad
\sum_{i,j\in[n]}(X_0)_{ij}
=\mathsf A_0(I)=1,
\]
and
\[
\mathsf B_0(E_0)=\mathsf A(J)
=n\sum_{|s|<n}\operatorname{Tr}(X_s).
\]

The estimate needed below follows by grouping nearby indices.
For an integer \(1\le d\le n\), every matrix \(M=(M_{ab})_{a,b\in[n]}\succeq0\) with \(M_{ab}=0\) for \(0<|a-b|<d\) satisfies
\begin{equation}\label{eq:wn-interval-estimate}
\sum_{a,b\in[n]}M_{ab}
\le\left\lceil\frac nd\right\rceil\operatorname{Tr}(M).
\end{equation}
Write \(M_{ab}=\langle u_a,u_b\rangle\) and partition \([n]\) into consecutive intervals \(I_1,\ldots,I_{\lceil n/d\rceil}\), each of size at most \(d\).
The vanishing entries make the vectors within each interval pairwise orthogonal, so Cauchy--Schwarz gives
\[
\left\|\sum_{a\in[n]}u_a\right\|^2
\le\left\lceil\frac nd\right\rceil
\sum_{j=1}^{\lceil n/d\rceil}
\left\|\sum_{a\in I_j}u_a\right\|^2
=\left\lceil\frac nd\right\rceil
\sum_{a\in[n]}\|u_a\|^2.
\]

For the code bound, apply this estimate to the matrix representing \(n^{-2}\mathsf B_0\):
\[
M:=\bigl((X_{b-a})_{bb}\bigr)_{a,b\in[n]},
\qquad M_{ab}=n^{-2}\mathsf B_0(\mathsf E_{ba}^{(0)}).
\]
Positivity on \(K_0^+\) gives \(M\succeq0\), and the code constraints give \(M_{ab}=0\) for \(0<|a-b|<\delta\).
The entries of \(X_0\) are nonnegative and sum to \(1\), so
\[
\operatorname{Tr}(M)=\operatorname{Tr}(X_0)\le1,
\qquad
\frac{\mathsf A(J)}n
=\sum_{|s|<n}\operatorname{Tr}(X_s)
=\sum_{a,b\in[n]}M_{ab}.
\]
Set \(m:=\lceil n/\delta\rceil\).
By \eqref{eq:wn-interval-estimate} with \(d=\delta\),
\[
\frac{\mathsf A(J)}n
\le m\operatorname{Tr}(M)\le m.
\]
The projection
\[
p:=\sum_{j=0}^{m-1}\varepsilon_{1+j\delta,1+j\delta}
\]
is a \(\Delta_\delta\)-code of rank \(m\), by Proposition~\ref{prop:matrix-difference-code-design}.
Since \(\esize(p)=n\operatorname{rank}(p)=nm\), it attains the code bound and also proves that the rank bound is attained.
For any code projection, \(\esize(p)=n\operatorname{rank}(p)\) gives the stated rank inequality.

For a feasible design family, the design constraints and \eqref{eq:wn-transition} give
\[
(X_0)_{ab}=0\qquad(1\le|a-b|\le t).
\]
Set \(m:=\lceil n/(t+1)\rceil\).
Applying \eqref{eq:wn-interval-estimate} to \(X_0\), with \(d=t+1\), gives
\[
1=\sum_{a,b\in[n]}(X_0)_{ab}
\le m\operatorname{Tr}(X_0).
\]
Since every \(X_s\) is positive semidefinite,
\[
\mathsf B_0(E_0)
=n\sum_{|s|<n}\operatorname{Tr}(X_s)
\ge n\operatorname{Tr}(X_0)\ge\frac nm.
\]

To attain \(n/m\), choose indices at least \(t+1\) apart and put
\[
S:=\{1+j(t+1):j=0,\ldots,m-1\}\subseteq[n],
\qquad
c:=\sum_{i,j\in S}\varepsilon_{ij},
\]
and define
\[
\mathsf A_s:=0\quad(s\ne0),
\qquad
\mathsf A_0(x):=\frac{\langle c,x(c)\rangle}{nm^2}
\quad(x\in V_0).
\]
Since \(c\in A_+\) and \(\operatorname{Tr}(c^2)=m^2\), the defining properties of \(R_0^+\) and the involution identity for distributions give positivity, Hermitian symmetry, and \(\mathsf A_0(I)=1\).
For \(\mathsf B:=Q'\mathsf A\), \eqref{eq:wn-transition} gives
\[
\mathsf B_r(\mathsf E_{ij}^{(r)})=
\begin{cases}
n^2m^{-2},&i=j,\ i\in S,\ i-r\in S,\\
0,&\text{otherwise.}
\end{cases}
\]
The matrices representing these functionals are diagonal and positive semidefinite, so every \(\mathsf B_r\) is Hermitian and nonnegative on \(K_r^+\).
The separation of the indices in \(S\) gives \(\mathsf B_r=0\) for \(1\le|r|\le t\).
Thus the pair is feasible, with objective
\[
\mathsf B_0(E_0)
=\frac1n\sum_{i\in[n]}\mathsf B_0(\mathsf E_{ii}^{(0)})
=\frac nm.
\]
This proves \eqref{eq:wn-design-bound}.

To determine the minimum for actual designs, the nonnegative eigenvalues of every \(\chi\in A_+\setminus\{0\}\) give
\[
\esize(\chi)
=n\frac{\operatorname{Tr}(\chi)^2}{\operatorname{Tr}(\chi^2)}
\ge n.
\]
Equality holds for the \(T_t\)-design \(\varepsilon_{11}\), by Proposition~\ref{prop:matrix-difference-code-design}.
\end{proof}

When \(\lceil n/(t+1)\rceil>1\), the conic design optimum is strictly smaller than the minimum effective size \(n\) of a nonzero positive element of \(M_n(\C)\).
Thus some feasible pairs of functionals in the design program do not arise as distributions of positive elements.
The proof uses the inclusion in Lemma~\ref{lem:matrix-difference-cones}; it does not determine all \(R_s^+\) or provide semidefinite representations of the complete programs.
\section{Quantum Schurian schemes}\label{sec:schurian}

The classical Schurian construction takes the commutant of a transitive permutation action~\cite{Higman1970}.
We replace scalar-valued functions on cosets with matrix-valued functions determined by a representation of the stabilizer.

Let a finite group \(G\) act on a finite quantum space \(A\) by unital \(\ast\)-algebra automorphisms preserving \(h_A\).
The action is unitary for \(\langle x,y\rangle=h_A(x^\ast y)\).
Define
\[
\End_G(A):=
\{T\in\End_{\C}(A):
T(g\cdot x)=g\cdot T(x)\quad(g\in G,\ x\in A)\}.
\]

\begin{lemma}\label{lem:equivariant-endomorphism-algebra}
The space \(\End_G(A)\) is a quantum coherent algebra.
\end{lemma}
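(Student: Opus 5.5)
We verify the axioms of Definition~\ref{def:qas} for a quantum coherent algebra directly. Write \(\pi_g(x):=g\cdot x\), so that \(\End_G(A)=\{T:T\pi_g=\pi_gT\ (g\in G)\}\). This is the commutant of \(\{\pi_g\}\), so it is a linear subspace closed under composition and contains \(I\).

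\textbf{Adjoints and \(J\).} The action is unitary for \(\langle\cdot,\cdot\rangle\), so \(\pi_g^\dagger=\pi_{g^{-1}}\). Taking adjoints of \(T\pi_{g^{-1}}=\pi_{g^{-1}}T\) gives \(\pi_gT^\dagger=T^\dagger\pi_g\), hence closure under \(\dagger\). For \(J=\eta\eta^\dagger\), note that \(\pi_g\eta=\eta\) because the action is unital, and \(\eta^\dagger\pi_g=h_A\circ\pi_g=h_A=\eta^\dagger\) because the action preserves \(h_A\). Therefore \(\pi_gJ=J=J\pi_g\).

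\textbf{The involution \(\ast\).} Since the action is by \(\ast\)-automorphisms, \(\pi_g(x^\ast)=\pi_g(x)^\ast\). Hence
\[
T^\ast(\pi_g x)=\bigl(T(\pi_g(x)^\ast)\bigr)^\ast=\bigl(T(\pi_g(x^\ast))\bigr)^\ast=\bigl(\pi_gT(x^\ast)\bigr)^\ast=\pi_g\bigl(T(x^\ast)^\ast\bigr)=\pi_gT^\ast(x).
\]

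\textbf{The quantum Schur product.} This is the one step needing care. Since each \(\pi_g\) is an algebra automorphism, \(m(\pi_g\otimes\pi_g)=\pi_gm\). Taking adjoints and using unitarity of \(\pi_g\) and of \(\pi_g\otimes\pi_g\) gives
\[
(\pi_{g^{-1}}\otimes\pi_{g^{-1}})m^\dagger=m^\dagger\pi_{g^{-1}},
\]
for all \(g\). Replacing \(g\) by \(g^{-1}\), we get \(m^\dagger\pi_g=(\pi_g\otimes\pi_g)m^\dagger\). Then, for \(S,T\in\End_G(A)\),
\[
\pi_g\,m(S\otimes T)m^\dagger=m(\pi_g\otimes\pi_g)(S\otimes T)m^\dagger=m(S\otimes T)(\pi_g\otimes\pi_g)m^\dagger=m(S\otimes T)m^\dagger\pi_g .
\]
The main point to check is precisely this intertwining of \(m^\dagger\), and it follows from unitarity.

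Together these verifications give all the axioms of a quantum coherent algebra. Note that the homogeneity (QAS) condition is not claimed in the lemma.
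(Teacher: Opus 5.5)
Your proof is correct and follows the same route as the paper. Composition, \(I\), \(\dagger\), \(J\) and \(\ast\) come from the commutant structure, unitarity, unitality, trace preservation and the \(\ast\)-automorphism property. For the quantum Schur product, you take adjoints of \(m(\pi_g\otimes\pi_g)=\pi_g m\) using unitarity to make \(m^\dagger\) equivariant, so \(S\mathbin{\bullet}T\in\End_G(A)\). You simply write out in full the verifications that the paper compresses into a few lines.
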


\begin{proof}
Let \(S,T\in\End_G(A)\).
Preservation of \(1_A,h_A,\ast\) and unitarity give
\[
I,J,ST,S^\dagger,S^\ast\in\End_G(A).
\]
Preservation of multiplication makes \(m\) equivariant for the diagonal action on \(A\otimes A\).
Unitarity then makes \(m^\dagger\) equivariant, so
\[
S\mathbin{\bullet}T=m(S\otimes T)m^\dagger\in\End_G(A).
\]
\end{proof}

Let \(H\le G\), and let \(V\ne\{0\}\) be a finite-dimensional Hilbert space with unitary group \(U(V)\).
For a unitary representation \(\tau:H\to U(V)\), put \(d=\dim V\) and
\[
A:=A_{G,H,\tau}
 :=\left\{f:G\to\End(V):
 \begin{array}{l}
 f(gh)=\tau(h)^{-1}f(g)\tau(h)\\
 (g\in G,\ h\in H)
 \end{array}\right\}.
\]
For \(f,f'\in A\), the operations, unit, and trace are
\[
\begin{aligned}
(ff')(g)&=f(g)f'(g),
&f^\ast(g)&=f(g)^\ast,\\
1_A(g)&=I_V,
&h_A(f)&=d\sum_{gH\in G/H}\operatorname{Tr}(f(g)).
\end{aligned}
\]
Since \(\operatorname{Tr}(f(gh))=\operatorname{Tr}(f(g))\), the trace is independent of the coset representatives.
Evaluation at one representative of each coset gives
\[
A\cong\bigoplus_{gH\in G/H}\End(V),
\qquad h_A(1_A)=|G/H|d^2.
\]
Define left translation and its commutant by
\[
(s\cdot f)(g):=f(s^{-1}g)
\quad(s,g\in G),
\qquad W:=\End_G(A).
\]

\begin{theorem}\label{thm:quantum-schurian-construction}
Left translation acts on the finite quantum space \(A\), and \(W\) is a QAS.
\end{theorem}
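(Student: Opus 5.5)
The plan is to reduce the theorem to Lemma~\ref{lem:equivariant-endomorphism-algebra} plus a transitivity argument for homogeneity. It has three steps.

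\textbf{Step 1: left translation acts on \(A\) by trace-preserving \(\ast\)-automorphisms.} If \(f\in A\) and \(s\in G\), then
\[
(s\cdot f)(gh)=f(s^{-1}gh)=\tau(h)^{-1}f(s^{-1}g)\tau(h),
\]
so \(s\cdot f\in A\). Translation is clearly pointwise multiplicative, \(\ast\)-preserving and unital, and it is a group action. Since \(s\) permutes the cosets \(G/H\), and \(\operatorname{Tr}f(g)\) depends only on \(gH\), the trace \(h_A\) is preserved. Lemma~\ref{lem:equivariant-endomorphism-algebra} then shows that \(W=\End_G(A)\) is a quantum coherent algebra. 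It remains to prove homogeneity.

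\textbf{Step 2: describe the "diagonal" Schur projections in \(\End(A)\).} Identify \(A\) with \(\bigoplus_{gH}\End(V)\), where each summand is \(M_d(\C)\) with the weight \(d\operatorname{Tr}\). In each summand \(m^\dagger(\varepsilon_{uv})=d^{-1}\sum_k\varepsilon_{uk}\otimes\varepsilon_{kv}\), as in the proof of Lemma~\ref{lem:matrix-difference-basic}, and \(m^\dagger\) does not mix summands. Using this formula, I will show the following claim.

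\emph{Claim.} If \(p\in\End(A)\) satisfies \(p\bullet p=p=p^\ast\) and \(p\bullet I=I\bullet p=p\), then \(p(x)=zx\) for some central projection \(z\) of \(A\).

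The idea is that \(p\bullet I=I\bullet p=p\) forces \(p\) to preserve each summand. Within a summand, these conditions force \(p\) to be a scalar multiple of the identity; this is the matrix-algebra homogeneity of~\cite[Proposition~2.10]{Gromada2024}. Then \(p\bullet p=p\) and \(p^\ast=p\) force each scalar to be \(0\) or \(1\). Equivalently, one can cite Gromada's description of the projections below \(I\) in \((\End(A),\bullet,\ast)\) as multiplications by central projections. This computation is the main obstacle. If citing is unavailable, I would first try the explicit matrix-unit computation. Write \(p\) in the basis of maps \(\varepsilon_{ab}\mapsto\varepsilon_{cd}\) on each pair of summands. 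Then \(I\bullet p\) contracts the left index and \(p\bullet I\) contracts the right index, which kills every component that is not of the form \(x\mapsto\lambda x\) within a single summand.

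\textbf{Step 3: use equivariance and transitivity.} By Step 2 it suffices to show that \(x\mapsto zx\) commutes with translation only when \(z\) is \(0\) or \(1_A\). First, the center of \(A\) consists of the functions \(g\mapsto c(gH)I_V\) with \(c\in\C^{G/H}\). Indeed, the center of \(\bigoplus_{gH}\End(V)\) consists of blockwise scalars, and scalars automatically satisfy the covariance condition. Equivariance of \(x\mapsto zx\) gives \(s(z)\,x=z\,x\) for all \(x\in A\) and \(s\in G\); taking \(x=1_A\) yields \(s\cdot z=z\). Hence \(c\) is \(G\)-invariant on \(G/H\). Left translation acts transitively on \(G/H\), so \(c\) is constant. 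Since \(z\) is a projection, \(z\in\{0,1_A\}\), and correspondingly \(p\in\{0,I\}\). Therefore \(W\) is a QAS.
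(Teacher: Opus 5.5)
Your proposal is correct and follows the paper's proof. Both check that left translation is a trace-preserving $\ast$-action, so that Lemma~\ref{lem:equivariant-endomorphism-algebra} applies. Both then use Gromada's description of the projections $p$ with $p\bullet I=I\bullet p=p$ as multiplication operators, together with equivariance and transitivity on $G/H$, to force $p\in\{0,I\}$. The paper cites~\cite[Lemma~2.11]{Gromada2024} and phrases this via a subset $C\subseteq G/H$ rather than your central projection $z$, which is the same data. Your matrix-unit sketch for Step~2, in which the two contractions force a blockwise scalar, is a valid self-contained substitute for that citation.
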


\begin{proof}
Left translation preserves the pointwise operations, and
\[
\begin{aligned}
(s\cdot f)(gh)
&=\tau(h)^{-1}(s\cdot f)(g)\tau(h),\\
h_A(s\cdot f)
&=d\sum_{gH\in G/H}\operatorname{Tr}(f(s^{-1}g))
 =h_A(f).
\end{aligned}
\]
Thus Lemma~\ref{lem:equivariant-endomorphism-algebra} applies.
To prove minimality of \(I\), let \(p\in W\) satisfy
\[
p\mathbin{\bullet}p=p=p^\ast,
\qquad p\mathbin{\bullet}I=I\mathbin{\bullet}p=p.
\]
By~\cite[Lemma~2.11]{Gromada2024}, there is a subset \(C\subseteq G/H\) such that
\[
(pf)(g)=
\begin{cases}
f(g),&gH\in C,\\
0,&gH\notin C.
\end{cases}
\]
Equivariance gives \(sC=C\) for every \(s\in G\).
Since \(G\) acts transitively on \(G/H\),
\[
C\in\{\varnothing,G/H\}
\quad\Longrightarrow\quad p\in\{0,I\}.
\]
\end{proof}

\begin{definition}\label{def:quantum-schurian}
The QAS \(\End_G(A_{G,H,\tau})\) is the quantum Schurian scheme associated with \(G,H,\tau\).
For \(v\ge2\), \(1\le k\le v-1\), and a unitary representation \(\tau\) of
\[
H=\{g\in S_v:g([k])=[k]\}\cong S_k\times S_{v-k},
\]
the quantum Johnson scheme is
\[
\mathfrak J_\tau(v,k):=\End_{S_v}(A_{S_v,H,\tau}).
\]
\end{definition}

For \(V=\C\) and \(\tau\equiv1\),
\[
A\cong\C^{G/H},
\qquad W=\End_G(\C^{G/H}),
\]
so \(W\) is the classical Schurian adjacency algebra~\cite{Higman1970}.
We apply the construction to four families below.

\begin{example}\label{ex:quantum-johnson}
For the Johnson scheme, let \(v\ge2\) and \(1\le k\le v-1\).
Take
\[
\begin{aligned}
G&=S_v,\\
H&=\{g\in S_v:g([k])=[k]\}
   \cong S_k\times S_{v-k}.
\end{aligned}
\]
Then
\[
\begin{aligned}
G/H&\xrightarrow{\sim}\{B\subseteq[v]:|B|=k\},\\
gH&\longmapsto g([k]).
\end{aligned}
\]
For a unitary representation \(\tau:H\to U(V)\), Theorem~\ref{thm:quantum-schurian-construction} gives the QAS
\[
\mathfrak J_\tau(v,k)=\End_G(A_{G,H,\tau}).
\]
Under this identification, the orbits on ordered pairs are
\[
\{(B,C):|B\cap C|=k-i\},
\qquad i=0,\ldots,\min(k,v-k).
\]
Pairs with the same \(|B\cap C|\) have equally sized corresponding parts
\[
B\cap C,\quad B\setminus C,\quad C\setminus B,
\quad[v]\setminus(B\cup C).
\]
Bijections between these parts give the required permutation.
For \(V=\C\) and \(\tau\equiv1\), this recovers \(J(v,k)\)~\cite[Examples~2.1 and~3.1]{MartinTanaka2009}.
\end{example}

\begin{example}\label{ex:quantum-grassmann}
For the Grassmann scheme, let \(q\) be a prime power, \(v\ge2\), and \(1\le k\le v-1\).
Relative to \(\mathbb F_q^v=\mathbb F_q^k\oplus\mathbb F_q^{v-k}\), take
\[
\begin{aligned}
G&=\operatorname{GL}_v(\mathbb F_q),\\
H&=\{g\in G:g(\mathbb F_q^k\oplus0)=\mathbb F_q^k\oplus0\}\\
 &=\left\{
 \begin{pmatrix}P&Q\\0&R\end{pmatrix}:
 \begin{array}{l}
 P\in\operatorname{GL}_k(\mathbb F_q),\\
 Q\in\mathbb F_q^{k\times(v-k)},\\
 R\in\operatorname{GL}_{v-k}(\mathbb F_q)
 \end{array}\right\}.
\end{aligned}
\]
Then
\[
\begin{aligned}
G/H&\xrightarrow{\sim}
 \{B\le\mathbb F_q^v:\dim B=k\},\\
gH&\longmapsto g(\mathbb F_q^k\oplus0).
\end{aligned}
\]
For a unitary representation \(\tau:H\to U(V)\), Theorem~\ref{thm:quantum-schurian-construction} gives the QAS \(\End_G(A_{G,H,\tau})\).
Under this identification, the orbits on ordered pairs are
\[
\{(B,C):\dim(B\cap C)=k-i\},
\qquad i=0,\ldots,\min(k,v-k).
\]
Extend a basis of \(B\cap C\) to bases of \(B\) and \(C\), then extend their union to a basis of \(\mathbb F_q^v\).
For two pairs with the same intersection dimension, matching these bases gives the required invertible linear map.
For \(V=\C\) and \(\tau\equiv1\), this recovers \(J_q(v,k)\)~\cite[Section~2]{Kurihara2011}.
\end{example}

\begin{example}\label{ex:quantum-hamming-schurian}
For the Schurian construction of the Hamming scheme, let \(n\ge1\) and \(q\ge2\) be integers.
Take
\[
\begin{aligned}
G&=S_q\wr S_n=S_q^n\rtimes S_n,\\
H&=S_{q-1}\wr S_n=S_{q-1}^n\rtimes S_n,
\end{aligned}
\]
where \(S_{q-1}=\{\pi\in S_q:\pi(1)=1\}\), and \(S_n\) permutes the factors.
The action and coset identification are
\[
\bigl((\pi_1,\ldots,\pi_n),\sigma\bigr)\cdot x
 =\bigl(\pi_i(x_{\sigma^{-1}(i)})\bigr)_{i=1}^n,
\]
\[
\begin{aligned}
G/H&\xrightarrow{\sim}[q]^n,\\
gH&\longmapsto g\cdot(1,\ldots,1).
\end{aligned}
\]
For a unitary representation \(\tau:H\to U(V)\), Theorem~\ref{thm:quantum-schurian-construction} gives the QAS \(\End_G(A_{G,H,\tau})\).
Under this identification, the orbits on ordered pairs are
\[
\{(x,y):|\{j\in[n]:x_j\ne y_j\}|=i\},
\qquad i=0,\ldots,n.
\]
Coordinate permutations match the positions where the words differ; symbol permutations then match each ordered pair of symbols.
For \(V=\C\) and \(\tau\equiv1\), this recovers \(H(n,q)\)~\cite[Examples~2.2 and~3.2]{MartinTanaka2009}.
\end{example}

\begin{remark}
For a prime power \(q\) and \(n\ge1\), the Schurian Hamming construction with alphabet size \(q^2\) and the twisted Hamming construction in Section~\ref{sec:translation-hamming} quantize \(H(n,q^2)\) on the respective algebras
\[
\bigoplus_{x\in[q^2]^n}\End(V)
\qquad\text{and}\qquad M_{q^n}(\C).
\]
The first has \(q^{2n}>1\) simple summands, and the second is simple, so these are different finite quantum spaces.
\end{remark}

\begin{example}\label{ex:quantum-bilinear-forms}
For the bilinear forms scheme, let \(q\) be a prime power, \(r,s\ge1\), and put \(\Omega=\mathbb F_q^{r\times s}\), with its additive group structure.
The linear action used in the semidirect product is
\[
(L,R)\cdot X:=LXR^{-1}
\qquad
\left(
\begin{array}{l}
L\in\operatorname{GL}_r(\mathbb F_q),\
R\in\operatorname{GL}_s(\mathbb F_q),\\
X\in\Omega
\end{array}
\right).
\]
Take
\[
\begin{aligned}
G&=\Omega\rtimes
 \bigl(\operatorname{GL}_r(\mathbb F_q)
 \times\operatorname{GL}_s(\mathbb F_q)\bigr),\\
H&=\{(0,L,R):L\in\operatorname{GL}_r(\mathbb F_q),\
 R\in\operatorname{GL}_s(\mathbb F_q)\}.
\end{aligned}
\]
The action and coset identification are
\[
(B,L,R)\cdot X=B+LXR^{-1},
\]
\[
\begin{aligned}
G/H&\xrightarrow{\sim}\Omega,\\
(B,L,R)H&\longmapsto B.
\end{aligned}
\]
For a unitary representation \(\tau:H\to U(V)\), Theorem~\ref{thm:quantum-schurian-construction} gives the QAS \(\End_G(A_{G,H,\tau})\).
Under this identification, the orbits on ordered pairs are
\[
\{(X,Y):\operatorname{rank}(X-Y)=i\},
\qquad i=0,\ldots,\min(r,s).
\]
Translation sends \((X,Y)\) to \((X-Y,0)\); matrices of the same rank are related by invertible row and column operations.
For \(V=\C\) and \(\tau\equiv1\), this recovers the classical bilinear forms scheme~\cite[Section~2]{Kurihara2011}.
\end{example}

\begin{prob}
Quantize the distance schemes of twisted Grassmann graphs of diameter at least two and the two-class schemes of Chang graphs on noncommutative finite quantum spaces, and explain how the classical schemes are recovered.
\end{prob}

Twisted Grassmann graphs of diameter at least two~\cite{VanDamKoolen2005} and Chang graphs~\cite[Proposition~4.3 and its proof]{AbdollahiVanDamJazaeri2017} are not vertex-transitive.
Since translation and Schurian schemes admit transitive groups of automorphisms, these schemes are neither translation nor Schurian.
Thus the constructions above do not apply directly.
\section{Quantum block designs}\label{sec:quantum-block-designs}

The subgroup-averaging method below applies to all four constructions in Section~\ref{sec:schurian}, with an appropriate choice of subgroups.
Here we treat only block designs: averaging over permutations fixing prescribed points gives a design condition in the quantum Johnson scheme and recovers the classical incidence condition for scalar weights.

Let \(v\ge2\) and \(1\le k\le v-1\), and put
\[
\mathcal B:=\binom{[v]}k,
\qquad
H:=\{\sigma\in S_v:\sigma([k])=[k]\}\cong S_k\times S_{v-k}.
\]
\begin{definition}\label{def:weighted-block-design}
For \(0\le t\le k\), a family \(w\in\mathbb R_{\ge0}^{\mathcal B}\setminus\{0\}\) is a weighted \(t\)-\((v,k)\) design if
\[
\sum_{\substack{B\in\mathcal B\\B\supseteq C}}w_B
=\frac{\binom{k}{t}}{\binom{v}{t}}\sum_{B\in\mathcal B}w_B
\qquad\left(C\in\binom{[v]}t\right).
\]
\end{definition}
Equivalently, the left-hand side is independent of \(C\); summing over all \(t\)-subsets determines its common value~\cite{Delsarte1973,MartinTanaka2009}.

Throughout this section, let \(\tau:H\to U(V)\) be a nonzero irreducible finite-dimensional unitary representation, put \(d=\dim V\), and set \(A=A_{S_v,H,\tau}\) as in Example~\ref{ex:quantum-johnson}.
The induced representation decomposes into mutually orthogonal subspaces indexed by the blocks:
\[
\operatorname{Ind}_H^{S_v}(\tau)\cong\bigoplus_{B\in\mathcal B}V_B,
\qquad
\dim V_B=d,
\qquad
gV_B=V_{gB}
\qquad(g\in S_v)
\]
by the induced representation construction~\cite{Serre1977}.
This identifies the algebra and trace with
\[
A\cong\bigoplus_{B\in\mathcal B}\End(V_B),
\qquad
h_A((x_B)_B)=d\sum_{B\in\mathcal B}\operatorname{Tr}(x_B).
\]
The group action transports the components by unitary conjugation.
For \(C\subseteq[v]\), put
\[
K_C:=\{g\in S_v:g(c)=c\quad(c\in C)\}.
\]
For an \(S_v\)-module \(U\) and \(K\le S_v\), write
\[
U^K:=\{u\in U:g\cdot u=u\quad(g\in K)\}.
\]

The average over \(K_C\) is the orthogonal projection onto \(A^{K_C}\).
We require it to retain only the constant component.
\begin{definition}\label{def:quantum-block-tau-design}
For \(0\le t\le k\), an element \(\chi\in A_+\setminus\{0\}\) is a quantum block \(\tau\)-design of strength \(t\) if
\begin{equation}\label{eq:tau-quantum-block-design}
\frac1{|K_C|}\sum_{g\in K_C}g\cdot\chi=\frac{h_A(\chi)}{h_A(1_A)}1_A
\qquad\left(C\in\binom{[v]}t\right).
\end{equation}
\end{definition}

To express this averaging condition in terms of the distributions from Section~\ref{sec:delsarte-theory}, we determine which irreducible components have \(K_C\)-fixed vectors.
For each partition \(\lambda\vdash v\), let \(S^\lambda\) be the irreducible complex representation of \(S_v\) indexed by \(\lambda\), and let \(\lambda_1\) be its largest part~\cite[Chapter~2]{Sagan2001}.
Let \(e_\lambda\) be the orthogonal projection onto the sum of the submodules of \(A\) isomorphic to \(S^\lambda\), and set \(e_\lambda=0\) if none occur.
Complete reducibility and Schur's lemma identify the nonzero \(e_\lambda\) as the minimal central projections of \(\mathfrak J_\tau(v,k)\) for composition~\cite{Serre1977}.
An \(S_v\)-invariant element is a constant matrix commuting with \(\tau(H)\).
Irreducibility of \(\tau\) and Schur's lemma therefore give
\[
A^{S_v}=\C1_A,
\qquad
e_{(v)}=E_0.
\]
Put
\[
T_t:=\{\lambda\vdash v:e_\lambda\ne0,\ 1\le v-\lambda_1\le t\}.
\]

\begin{theorem}\label{thm:johnson-pointwise-characterization}
Let \(0\le t\le k\) and \(\chi\in A_+\setminus\{0\}\).
Then \(\chi\) is a quantum block \(\tau\)-design of strength \(t\) if and only if it is a \(T_t\)-design in \(\mathfrak J_\tau(v,k)\).
Equivalently,
\[
e_\lambda\chi=0
\qquad(\lambda\vdash v,\ 1\le v-\lambda_1\le t).
\]
For the one-dimensional trivial representation \(\tau\), these conditions are precisely those of a weighted \(t\)-\((v,k)\) design.
\end{theorem}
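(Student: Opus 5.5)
The plan is to translate the averaging condition into a statement about isotypic components, and then use a standard fact about fixed vectors of Young subgroups. Write \(\pi_C:=|K_C|^{-1}\sum_{g\in K_C}g\), the orthogonal projection of \(A\) onto \(A^{K_C}\). The \(e_\lambda\) are central for composition and commute with the action. Hence \(A=\bigoplus_\lambda e_\lambda A\), and \(\pi_C\) preserves each summand. Since \(A^{S_v}=\C1_A\) and \(e_{(v)}=E_0\), we have \(\pi_C(E_0\chi)=E_0\chi=\frac{h_A(\chi)}{h_A(1_A)}1_A\). So \eqref{eq:tau-quantum-block-design} is equivalent to
\[
\pi_C(e_\lambda\chi)=0\qquad\Bigl(\lambda\ne(v),\ C\in\tbinom{[v]}t\Bigr).
\]
By Lemma~\ref{lem:spectral-vanishing}, the \(T_t\)-design condition is \(e_\lambda\chi=0\) for \(\lambda\in T_t\). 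It therefore suffices to prove two statements:
\begin{enumerate}
\item[(a)] if \(v-\lambda_1>t\), then \(S^\lambda\) has no nonzero \(K_C\)-fixed vector, so \(\pi_C e_\lambda=0\) automatically;
\item[(b)] if \(1\le v-\lambda_1\le t\) and \(\pi_C(e_\lambda\chi)=0\) for all \(C\), then \(e_\lambda\chi=0\).
\end{enumerate}

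For (a), note that \(K_C\cong S_{v-t}\) is a Young subgroup. By Frobenius reciprocity, \(\dim (S^\lambda)^{K_C}\) equals the multiplicity of \(S^\lambda\) in the permutation module \(M^{(v-t,1^t)}\), which is the module of injective \(t\)-tuples. Young's rule says this multiplicity is a Kostka number. It is nonzero only if \(\lambda\trianglerighteq(v-t,1^t)\), that is, only if \(\lambda_1\ge v-t\). This gives (a), and also shows that \((S^\lambda)^{K_C}\ne0\) when \(v-\lambda_1\le t\).

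For (b), fix \(\lambda\) with \(1\le v-\lambda_1\le t\) and put \(y:=e_\lambda\chi\), assuming \(\pi_C y=0\) for every \(t\)-subset \(C\). Then \(y\) is orthogonal to \(A^{K_C}\cap e_\lambda A\) for all \(C\). I will show that these fixed spaces span \(e_\lambda A\). Their span is an \(S_v\)-submodule, because \(gA^{K_C}=A^{K_{gC}}\). Its intersection with each copy of \(S^\lambda\) is nonzero by the Kostka argument. To get the whole isotypic component, decompose \(e_\lambda A\cong S^\lambda\otimes\C^{m}\); the group acts only on the first factor. Then \(A^{K_C}\cap e_\lambda A=(S^\lambda)^{K_C}\otimes\C^m\), and the span over \(C\) is \(\bigl(\sum_C (S^\lambda)^{K_C}\bigr)\otimes\C^m\). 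The inner sum is a nonzero submodule of the irreducible \(S^\lambda\), so it is all of \(S^\lambda\). Hence \(y\) is orthogonal to \(e_\lambda A\), so \(y=0\). The converse direction is immediate from the displayed reduction, since \(e_\lambda\chi=0\) for \(\lambda\in T_t\), together with (a), gives \(\pi_C\chi=E_0\chi\). The fact that \(\lambda\) must actually occur in \(A\) does not matter, since \(e_\lambda=0\) otherwise.

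For trivial \(\tau\), \(A=\C^{\mathcal B}\) and \(\chi=w\ge0\). The value of \(\pi_C w\) at a block \(B\) averages \(w\) over the \(K_C\)-orbit of \(B\), and that orbit is determined by \(B\cap C\). So the condition says that \(w\) has constant sums over the blocks \(B'\) with each prescribed \(B'\cap C=D\), for every \(D\subseteq C\). The case \(D=C\) is the weighted design condition. Conversely, inclusion–exclusion over supersets shows that a weighted \(t\)-design is also a weighted \(s\)-design for \(s\le t\), and every such intersection count is an alternating sum of these, hence constant. The main obstacle is step (b), specifically the claim that the \(K_C\)-fixed spaces span the whole isotypic component despite multiplicities. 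The tensor-factor description handles this. I would double-check that \(g\mapsto g\cdot\) is unitary, which holds by Theorem~\ref{thm:quantum-schurian-construction}, so that \(\pi_C\) is indeed an orthogonal projection.
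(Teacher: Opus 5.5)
Your proposal is correct and follows essentially the paper's route: you reduce the averaging condition to orthogonality of \(\chi-E_0\chi\) against the \(K_C\)-fixed spaces, and you decide which \(S^\lambda\) have \(K_C\)-fixed vectors (via Frobenius reciprocity, Young's rule and dominance over \((v-t,1^t)\), where the paper uses the branching rule). You then use irreducibility and \(gK_Cg^{-1}=K_{gC}\) to show the fixed spaces span each relevant isotypic component, and handle trivial \(\tau\) through the orbits \(\{B:B\cap C=D\}\). The one loose spot is the scalar converse: the passage from a weighted \(t\)-design to a weighted \(s\)-design is a double count rather than inclusion--exclusion, and ``hence constant'' should be sharpened to ``equal to \(c\) times the orbit size''. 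That sharper claim holds because each forced value \(\sum_{B\supseteq R}w_B=c\binom{v-|R|}{k-|R|}\) agrees with the uniform weighting \(c\); the paper avoids the issue by working with \(w_B-c\) throughout.
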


\begin{proof}
First we identify the components excluded by the averaging condition.
For each \(C\in\binom{[v]}t\), the branching rule for \(K_C\cong S_{v-t}\) gives
\[
(S^\lambda)^{K_C}\ne0
\quad\Longleftrightarrow\quad
\lambda_1\ge v-t
\]
for every partition \(\lambda\vdash v\)~\cite[Section~2.8]{Sagan2001}.
Indeed, the trivial representation indexed by \((v-t)\) occurs on restriction exactly when this one-row partition is contained in \(\lambda\), which is equivalent to \(\lambda_1\ge v-t\).
For an irreducible \(S_v\)-submodule \(U\subseteq A\), the translates of \(U^{K_C}\) span an invariant subspace.
By irreducibility, this span is \(U\) when \(U^{K_C}\ne0\), and is zero otherwise.
Conjugation gives \(gK_Cg^{-1}=K_{gC}\), so applying this observation to each irreducible summand yields
\[
\operatorname{span}_{\C}\bigcup_{C\in\binom{[v]}t} A^{K_C}
=\C1_A\oplus\bigoplus_{\lambda\in T_t}\operatorname{im}e_\lambda.
\]
Since
\[
E_0\chi=\frac{h_A(\chi)}{h_A(1_A)}1_A,
\qquad \chi-E_0\chi\perp\C1_A,
\]
the decomposition shows that \(\chi-E_0\chi\perp A^{K_C}\) for every \(C\) exactly when \(e_\lambda\chi=0\) for every \(\lambda\in T_t\).
Averaging over \(K_C\) projects onto \(A^{K_C}\), so this orthogonality is precisely \eqref{eq:tau-quantum-block-design}.
Lemma~\ref{lem:spectral-vanishing} then identifies it with the \(T_t\)-design condition.

It remains to compare the averaging condition with the classical incidence condition.
For trivial \(\tau\), write \(\chi=(w_B)_{B\in\mathcal B}\) and put
\[
c:=\frac1{|\mathcal B|}\sum_{B\in\mathcal B}w_B.
\]
For fixed \(C\), the nonempty sets
\[
\{B\in\mathcal B:B\cap C=S\},\qquad S\subseteq C,
\]
are the \(K_C\)-orbits: permutations of \([v]\setminus C\) act transitively on the remaining elements of a block.
Thus \eqref{eq:tau-quantum-block-design} requires mean weight \(c\) on each nonempty orbit.
The blocks containing \(C\) form one such orbit, so their total weight is
\[
c\binom{v-t}{k-t}
=\frac{\binom{k}{t}}{\binom{v}{t}}\sum_{B\in\mathcal B}w_B,
\]
as required.

Conversely, suppose that \(w\) is a weighted \(t\)-\((v,k)\) design.
The incidence condition gives \(\sum_{B\supseteq C}(w_B-c)=0\) whenever \(|C|=t\).
We first obtain the same vanishing for smaller prescribed subsets by counting their \(t\)-subset extensions.
For \(S\subseteq[v]\) with \(|S|\le t\),
\[
\binom{k-|S|}{t-|S|}\sum_{B\supseteq S}(w_B-c)
=\sum_{\substack{C\supseteq S\\|C|=t}}\sum_{B\supseteq C}(w_B-c)=0.
\]
Since \(t\le k\), the coefficient is positive, giving \(\sum_{B\supseteq S}(w_B-c)=0\).
We can now isolate blocks having a specified intersection with \(C\).
For \(|C|=t\) and \(S\subseteq C\), inclusion--exclusion gives
\[
\sum_{B:B\cap C=S}(w_B-c)
=\sum_{S\subseteq R\subseteq C}(-1)^{|R|-|S|}\sum_{B\supseteq R}(w_B-c)=0.
\]
Every nonempty orbit therefore has mean weight \(c\), proving the averaging condition.
\end{proof}

Component traces and scalar matrices relate the two design conditions as follows.

\begin{samepage}
\begin{proposition}\label{prop:johnson-fiber-traces}
Let \(0\le t\le k\), and let \(\chi=(\chi_B)_{B\in\mathcal B}\in A_+\setminus\{0\}\) be a \(T_t\)-design.
Then \((\operatorname{Tr}(\chi_B))_{B\in\mathcal B}\) is a weighted \(t\)-\((v,k)\) design.
Conversely, for every weighted \(t\)-\((v,k)\) design \(w\), the element
\[
(w_BI_{V_B})_{B\in\mathcal B}
\]
is a \(T_t\)-design.
\end{proposition}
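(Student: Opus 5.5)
The plan is to pass both directions through the averaging characterization in Theorem~\ref{thm:johnson-pointwise-characterization}. A \(T_t\)-design is exactly an element satisfying \eqref{eq:tau-quantum-block-design}. The classical incidence condition is the same averaging condition for trivial \(\tau\), which the last part of that theorem identifies with weighted \(t\)-\((v,k)\) designs. So the task reduces to relating the averaging condition in \(A\) to the averaging condition in \(\C^{\mathcal B}\), using two maps: the componentwise trace map \(\operatorname{tr}:A\to\C^{\mathcal B}\), \((x_B)_B\mapsto(\operatorname{Tr}(x_B))_B\), and the scalar embedding \(\iota:\C^{\mathcal B}\to A\), \(w\mapsto(w_BI_{V_B})_B\).

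First I will check that both maps are \(S_v\)-equivariant. The action transports components by unitary conjugation, \((g\cdot x)_{gB}=u_{g,B}x_Bu_{g,B}^{\ast}\) for some unitary \(u_{g,B}:V_B\to V_{gB}\). Hence \(\operatorname{Tr}((g\cdot x)_{gB})=\operatorname{Tr}(x_B)\), and scalar components stay scalar with the same value. Both maps are linear, so each commutes with averaging over \(K_C\). I will also record the normalizations
\[
\operatorname{tr}(1_A)=d\,\mathbf 1,\qquad \sum_B\operatorname{Tr}(x_B)=h_A(x)/d,\qquad h_A(1_A)=d^2|\mathcal B|,
\]
together with \(\iota(\mathbf 1)=1_A\) and \(h_A(\iota(w))=d^2\sum_Bw_B\).

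For the first direction, let \(\chi\) be a \(T_t\)-design and apply \(\operatorname{tr}\) to \eqref{eq:tau-quantum-block-design}. The left side becomes the \(K_C\)-average of \(w:=\operatorname{tr}(\chi)\). The right side becomes
\[
\frac{h_A(\chi)}{d^2|\mathcal B|}\,d\,\mathbf 1=\frac{\sum_Bw_B}{|\mathcal B|}\,\mathbf 1,
\]
which is the trivial-\(\tau\) averaging condition for \(w\) (there \(h(\mathbf 1)=|\mathcal B|\)). Since \(\chi\in A_+\setminus\{0\}\), each \(\chi_B\succeq0\) and some \(\chi_B\ne0\), so \(w\in\R_{\ge0}^{\mathcal B}\setminus\{0\}\). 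The trivial-\(\tau\) case of Theorem~\ref{thm:johnson-pointwise-characterization} then says that \(w\) is a weighted \(t\)-\((v,k)\) design.

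For the converse, take a weighted design \(w\). By that same trivial case, \(w\) satisfies the averaging condition in \(\C^{\mathcal B}\). The element \(\iota(w)\) is positive and nonzero, and applying \(\iota\) gives a \(K_C\)-average equal to \(\frac{\sum w_B}{|\mathcal B|}1_A=\frac{h_A(\iota(w))}{h_A(1_A)}1_A\). Theorem~\ref{thm:johnson-pointwise-characterization} then shows \(\iota(w)\) is a \(T_t\)-design. I expect the main obstacle to be justifying equivariance of \(\operatorname{tr}\) and \(\iota\) under the identification \(A\cong\bigoplus_B\End(V_B)\), that is, that the action is really conjugation by unitaries \(V_B\to V_{gB}\). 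I would take this directly from the stated property \(gV_B=V_{gB}\) of the induced representation; everything after that is bookkeeping of the normalizing constants.
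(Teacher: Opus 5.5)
Your proposal is correct and follows essentially the same route as the paper. Both directions go through the averaging characterization in Theorem~\ref{thm:johnson-pointwise-characterization}. For the forward direction you take componentwise traces, which are invariant under the unitary conjugation transporting components, and then invoke the scalar (trivial-$\tau$) case. For the converse you use that conjugation preserves identity matrices, so scalar weights lift to $(w_BI_{V_B})_B$. Your explicit tracking of $h_A(1_A)=d^2|\mathcal B|$ and $h_A\bigl((w_BI_{V_B})_B\bigr)=d^2\sum_Bw_B$ only spells out normalizations the paper leaves implicit.
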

\end{samepage}

\begin{proof}
Write \(w_B=\operatorname{Tr}(\chi_B)\).
These weights are nonnegative and not all zero, since \(\chi\) is positive and nonzero.
By Theorem~\ref{thm:johnson-pointwise-characterization}, \(\chi\) satisfies \eqref{eq:tau-quantum-block-design}.
The action transports components by unitary conjugation, so taking their traces gives
\[
\frac1{|K_C|}\sum_{g\in K_C}w_{g^{-1}B}=
\frac1{|\mathcal B|}\sum_{D\in\mathcal B}w_D
\qquad\left(B\in\mathcal B,\ C\in\binom{[v]}t\right).
\]
The scalar case of Theorem~\ref{thm:johnson-pointwise-characterization} now gives the weighted classical design.
Conversely, for a weighted classical design, that theorem gives the scalar averaging condition.
Since unitary conjugation sends identity matrices to identity matrices, the same condition gives \eqref{eq:tau-quantum-block-design} for \((w_BI_{V_B})_B\).
Applying the theorem for the given \(\tau\) proves the converse.
\end{proof}

\begin{remark}\label{rem:johnson-zauner-orbits}
For projections, the averaging condition in Definition~\ref{def:quantum-block-tau-design} can be expressed through Zauner's coherence condition.
Fix \(0\le t\le k\), and let
\[
\mathcal H:=\bigoplus_{B\in\mathcal B}V_B,
\qquad
\rho:S_v\longrightarrow U(\mathcal H)
\]
be the induced representation, and identify \(A\) with the block diagonal subalgebra of \(\End(\mathcal H)\).
For \(P\in A\) satisfying
\[
P^2=P=P^\ast,
\qquad
0\ne P\ne1_A=I_{\mathcal H},
\]
put
\[
D_C(P):=\{\rho(g)P\rho(g)^\ast:g\in K_C\}
\qquad\left(C\in\binom{[v]}t\right).
\]
Each orbit element occurs equally often in the group average, so
\[
\frac1{|D_C(P)|}\sum_{Q\in D_C(P)}Q
=\frac1{|K_C|}\sum_{g\in K_C}\rho(g)P\rho(g)^\ast.
\]
Since \(h_A=d\operatorname{Tr}_{\mathcal H}\), we also have
\[
\frac{h_A(P)}{h_A(1_A)}
=\frac{\operatorname{rank}P}{\dim\mathcal H}.
\]
Consequently,
\begin{equation}\label{eq:johnson-zauner-coherence}
\begin{aligned}
&P\text{ is a quantum block }\tau\text{-design of strength }t\\
&\quad\Longleftrightarrow\quad
\sum_{Q\in D_C(P)}Q
=\frac{|D_C(P)|\operatorname{rank}P}{\dim\mathcal H}I_{\mathcal H}
\quad\left(C\in\binom{[v]}t\right).
\end{aligned}
\end{equation}
The projections in each \(D_C(P)\) have the same rank, and the right-hand side is Zauner's coherence condition~\cite[Definition~1.1]{Zauner1999}.
Under this condition, each \(D_C(P)\) is regular and coherent in his sense: a singleton orbit would force \(P\) to be a scalar projection, contradicting \(0\ne P\ne I_{\mathcal H}\).
Here the strength \(t\) specifies the subgroups \(K_C\), while the coherence condition on each orbit concerns the first tensor moment in Zauner's definition.
\end{remark}

\begin{prob}\label{prob:johnson-zauner-reconstruction}
Fix \(2\le t\le k\le v-1\) and an irreducible unitary representation \(\tau:H\to U(V)\).
For a finite set \(D\subseteq\End(\mathcal H)\) of at least two projections that is regular and coherent in Zauner's sense, determine necessary and sufficient conditions for the existence of
\[
u\in U(\mathcal H),\qquad
C\in\binom{[v]}t,\qquad
P\in A,\qquad
P^2=P=P^\ast,\quad0\ne P\ne1_A,
\]
such that \(P\) is a quantum block \(\tau\)-design of strength \(t\) and
\[
uDu^\ast=D_C(P).
\]
Give a construction of such \(P\).
\end{prob}
\section{An optimal quantum block design}\label{sec:optimal-block-design}

We construct two complementary projections that are designs of strength \(2\) in a quantum Johnson scheme, lie outside the center of the underlying algebra, and attain the minimum effective size \(20\).
The conic bound is \(40/3\); testing only the minimal central projections for the quantum Schur product gives \(10\).

Identify \(S_5=\operatorname{Sym}(X)\), where \(X:=\mathbb F_5\), and put
\[
\mathcal B:=\binom X2,\qquad
H:=\{g\in S_5:g\{0,1\}=\{0,1\}\}\cong S_2\times S_3.
\]
For \(B\in\mathcal B\) and \(g\in S_5\), coordinate permutation gives unitary maps
\[
\begin{aligned}
V_B&:=\left\{z\in\C^{X\setminus B}:
             \sum_{x\notin B}z_x=0\right\},
&\dim V_B&=2,\\
g:V_B&\longrightarrow V_{gB},
&(gz)_{g(x)}&:=z_x.
\end{aligned}
\]
The restriction
\[
\tau:H\longrightarrow U(V_{\{0,1\}}),\qquad
\tau(h)z:=hz
\]
is trivial on \(S_2\) and standard on \(S_3\), hence irreducible.
Set
\[
\begin{aligned}
A&:=\bigoplus_{B\in\mathcal B}\End(V_B),
&h_A(x)&:=2\sum_{B\in\mathcal B}\operatorname{Tr}(x_B),\\
(g\cdot x)_{gB}&:=g x_Bg^{-1},
&W&:=\End_{S_5}(A).
\end{aligned}
\]
Thus \(h_A(1_A)=40\).
The map
\[
A_{S_5,H,\tau}\longrightarrow A,\qquad
f\longmapsto(gf(g)g^{-1})_{g\{0,1\}}
\]
is a trace-preserving equivariant \(\ast\)-isomorphism.
Its inverse sends \(x\in A\) to \(f(g)=g^{-1}x_{g\{0,1\}}g\).
The forward map is independent of coset representatives by the transformation rule for \(f(gh)\).
It identifies \(W\) with \(\mathfrak J_\tau(5,2)\), preserving both products.

\begin{proposition}\label{prop:johnson-module-decomposition}
The \(S_5\)-module and composition algebra decompose as
\begin{equation}\label{eq:johnson-module-decomposition}
\begin{aligned}
A&\cong S^{(5)}\oplus2S^{(4,1)}\oplus2S^{(3,2)}
       \oplus2S^{(3,1,1)}
       \oplus S^{(2,2,1)}\oplus S^{(2,1,1,1)},\\
(W,\circ,\dagger)&\cong\C^{\oplus3}\oplus M_2(\C)^{\oplus3}.
\end{aligned}
\end{equation}
In particular, composition is noncommutative and \(\dim W=15\).
\end{proposition}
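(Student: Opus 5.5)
The plan is to read off the \(S_5\)-module structure of \(A\) as an induced representation, decompose it by Pieri's rule, and then obtain \(W\) from Schur's lemma via \eqref{eq:schurian-commutant}.

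\emph{Step 1: \(A\) as an induced module.} By the equivariant \(\ast\)-isomorphism \(A_{S_5,H,\tau}\cong A\) above, \(A\) is the space of functions \(f:S_5\to\End(V)\) with \(f(gh)=\tau(h)^{-1}f(g)\tau(h)\), acted on by left translation. This is exactly \(\operatorname{Ind}_H^{S_5}\) of the \(H\)-module \(\End(V)\cong V\otimes V^\ast\) with the conjugation action. So the first step is to decompose \(\End(V)\) as an \(S_2\times S_3\)-module.
\begin{itemize}
\item The \(S_2\) factor acts trivially, since \(\tau\) is trivial on \(S_2\).
\item As an \(S_3\)-module, \(\End(S^{(2,1)})\cong S^{(2,1)}\otimes S^{(2,1)}\cong S^{(3)}\oplus S^{(1,1,1)}\oplus S^{(2,1)}\), by the character table of \(S_3\) (the standard character is real).
\end{itemize}
Hence
\[
A\cong\operatorname{Ind}_{S_2\times S_3}^{S_5}\bigl(S^{(2)}\boxtimes S^{(3)}\bigr)\oplus\operatorname{Ind}\bigl(S^{(2)}\boxtimes S^{(1,1,1)}\bigr)\oplus\operatorname{Ind}\bigl(S^{(2)}\boxtimes S^{(2,1)}\bigr).
\]

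\emph{Step 2: Pieri's rule.} Adding a horizontal strip of two boxes to each \(S_3\) shape~\cite{Sagan2001} gives
\[
\begin{aligned}
(3)&\ \longmapsto\ (5),\ (4,1),\ (3,2),\\
(1,1,1)&\ \longmapsto\ (3,1,1),\ (2,1,1,1),\\
(2,1)&\ \longmapsto\ (4,1),\ (3,2),\ (3,1,1),\ (2,2,1).
\end{aligned}
\]
Summing these yields the first line of \eqref{eq:johnson-module-decomposition}. As a check, the dimensions \(1+2\cdot4+2\cdot5+2\cdot6+5+4=40\) agree with \(h_A(1_A)=\dim A=40\).

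\emph{Step 3: the commutant.} Since \(W=\End_{S_5}(A)\), Schur's lemma as in \eqref{eq:schurian-commutant} gives one block \(M_{m_\lambda}(\C)\) for each isotypic component.
\begin{itemize}
\item The multiplicities \(1,2,2,2,1,1\) give \(\C^{\oplus3}\oplus M_2(\C)^{\oplus3}\).
\item Hence \(\dim W=3+3\cdot4=15\).
\item Composition is noncommutative because some multiplicity equals \(2\).
\end{itemize}

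The only real bookkeeping obstacle is Step 1. There one must justify that the conjugation action on \(\End(V)\), rather than \(V\) itself, is what gets induced, and that \(\End(S^{(2,1)})\) splits as claimed. I would verify the latter by characters: the squared standard character is \(\chi^2=(4,0,1)\) on the classes (identity, transpositions, \(3\)-cycles), which equals \(\chi_{\mathrm{triv}}+\chi_{\mathrm{sgn}}+\chi_{\mathrm{std}}\).
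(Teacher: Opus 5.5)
Your proposal is correct and follows the paper's proof essentially step for step: the paper likewise decomposes \(\End(V_{\{0,1\}})\) under conjugation as \(\mathbf1\boxtimes(\mathbf1\oplus\operatorname{sgn}\oplus\operatorname{Std})\), induces each summand to \(S_5\) (citing the Littlewood--Richardson rule, of which your Pieri computation is the relevant special case), and reads off \((W,\circ,\dagger)\) from the multiplicities by Schur's lemma. Your character check \(\chi_{\operatorname{Std}}^2=(4,0,1)\) and the dimension count \(1+8+10+12+5+4=40\) are extra verifications that the paper leaves implicit.
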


\begin{proof}
Let \(\mathbf1\) denote the trivial representation of either group factor, let \(\operatorname{sgn}\) and \(\operatorname{Std}\) denote the sign and standard representations of \(S_3\), and let \(\boxtimes\) denote the external tensor product.
Conjugation on \(\End(V_{\{0,1\}})\) gives
\[
\begin{aligned}
\End(V_{\{0,1\}})
&\cong\mathbf1\boxtimes
(\mathbf1\oplus\operatorname{sgn}\oplus\operatorname{Std}),\\
A&\cong\operatorname{Ind}_H^{S_5}\End(V_{\{0,1\}}).
\end{aligned}
\]
The Littlewood--Richardson rule~\cite[Chapter~4]{Sagan2001} gives
\[
\begin{aligned}
\operatorname{Ind}_{S_2\times S_3}^{S_5}
(\mathbf1\boxtimes\mathbf1)
&\cong S^{(5)}\oplus S^{(4,1)}\oplus S^{(3,2)},\\
\operatorname{Ind}_{S_2\times S_3}^{S_5}
(\mathbf1\boxtimes\operatorname{sgn})
&\cong S^{(3,1,1)}\oplus S^{(2,1,1,1)},\\
\operatorname{Ind}_{S_2\times S_3}^{S_5}
(\mathbf1\boxtimes\operatorname{Std})
&\cong S^{(4,1)}\oplus S^{(3,2)}
       \oplus S^{(3,1,1)}\oplus S^{(2,2,1)}.
\end{aligned}
\]
Schur's lemma gives the composition algebra from the multiplicities~\cite[Chapter~2]{Serre1977}.
\end{proof}

For the isotypic projections from Section~\ref{sec:quantum-block-designs},
\[
e_{(5)}=E_0=J/40,\qquad
T_2=\{(4,1),(3,2),(3,1,1)\}.
\]
For \(\chi\in A_+\setminus\{0\}\), Theorem~\ref{thm:johnson-pointwise-characterization} identifies strength \(2\) with
\[
e_\lambda\chi=0\qquad(\lambda\in T_2).
\]
For such a \(T_2\)-design \(\chi\), Proposition~\ref{prop:johnson-fiber-traces} makes \((\operatorname{Tr}(\chi_B))_B\) a weighted \(2\)-\((5,2)\) design.
Each two-element subset is contained in just one block of \(\mathcal B\), namely itself, so all these traces are equal.
These trace conditions do not imply \(e_{(3,1,1)}\chi=0\), since \(\C^{\mathcal B}\) contains no copy of \(S^{(3,1,1)}\).
Affine symmetry will enforce all three vanishing conditions while allowing rank one in each component.

For \(B=\{a,b\}\), let \(e_x\) be the coordinate vector at \(x\in X\setminus B\), and define
\[
\begin{aligned}
m_B&:=\frac{a+b}{2}\in X\setminus B,
&u_B&:=\frac{\sum_{x\notin B}e_x-3e_{m_B}}{\sqrt6}
       \in V_B,\\
P_B^+&:=u_Bu_B^*,
&P_B^-&:=I_{V_B}-P_B^+,\qquad
P^\pm:=(P_B^\pm)_B.
\end{aligned}
\]
Since \(\|u_B\|=1\) and \(\dim V_B=2\), both \(P_B^\pm\) are rank-one projections.

\begin{lemma}\label{lem:agl-fixed-vectors}
For the affine group
\[
K:=\{x\mapsto ux+v:
      u\in\mathbb F_5^\times,\ v\in\mathbb F_5\}\le S_5,
\]
we have \((S^\lambda)^K=\{0\}\) for every \(\lambda\in T_2\).
\end{lemma}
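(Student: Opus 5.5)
The plan is to compute $\dim(S^\lambda)^K$ by the standard averaging formula
\[
\dim(S^\lambda)^K=\frac1{|K|}\sum_{g\in K}\chi_\lambda(g),
\]
where $\chi_\lambda$ is the character of $S^\lambda$. We then check that this sum vanishes for each $\lambda\in T_2=\{(4,1),(3,2),(3,1,1)\}$. Since the character is a class function, we only need the cycle types of the $20$ elements of $K$ as permutations of $X=\mathbb F_5$.

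\textbf{Step 1: cycle types in $K$.} The elements of $K$ fall into four cycle types.
\begin{itemize}
\item The identity.
\item The $4$ nontrivial translations $x\mapsto x+v$. Each is a $5$-cycle.
\item The $5$ maps $x\mapsto -x+v$. Each fixes the single point $v/2$ and swaps the rest in pairs, so its type is $(2,2,1)$.
\item The $10$ maps $x\mapsto ux+v$ with $u\in\{2,3\}$. Each fixes one point, and $u$ has multiplicative order $4$, so its type is $(4,1)$.
\end{itemize}
Hence
\[
\sum_{g\in K}\chi_\lambda(g)=\chi_\lambda(1)+4\chi_\lambda(5)+5\chi_\lambda(2,2,1)+10\chi_\lambda(4,1).
\]

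\textbf{Step 2: the partitions $(4,1)$ and $(3,2)$.} For these two I would use a permutation-module argument instead of character values. The group $K$ is $2$-transitive on $\mathbb F_5$, since $x\mapsto ux+v$ maps any ordered pair of distinct points to any other. So $K$ is transitive on $\mathcal B=\binom X2$, and hence $\C^{\mathcal B}$ has a one-dimensional $K$-fixed space. By the first line of the Littlewood--Richardson computation in the proof of Proposition~\ref{prop:johnson-module-decomposition},
\[
\C^{\mathcal B}\cong S^{(5)}\oplus S^{(4,1)}\oplus S^{(3,2)}.
\]
The summand $S^{(5)}$ already accounts for the one-dimensional fixed space. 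Therefore $(S^{(4,1)})^K=(S^{(3,2)})^K=0$.

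As a cross-check via characters:
\begin{itemize}
\item For $(4,1)$, the character is (number of fixed points) $-1$, giving $4-4+0+0=0$.
\item For $(3,2)$, the values are $5,0,1,-1$, giving $5+0+5-10=0$.
\end{itemize}

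\textbf{Step 3: the partition $(3,1,1)$.} This is the step needing care, because $S^{(3,1,1)}$ does not occur in $\C^{\mathcal B}$ and so requires its own character values. I would use $S^{(3,1,1)}\cong\Lambda^2S^{(4,1)}$ and
\[
\chi_{\Lambda^2}(g)=\tfrac12\bigl(\chi(g)^2-\chi(g^2)\bigr)\quad\text{with }\chi=\chi_{(4,1)}.
\]
The four relevant values are:
\begin{itemize}
\item Dimension: $6$.
\item On a $5$-cycle: $\tfrac12\bigl((-1)^2-(-1)\bigr)=1$.
\item On type $(2,2,1)$: $\chi=0$ and $g^2=1$, so the value is $\tfrac12(0-4)=-2$.
\item On type $(4,1)$: $\chi=0$ and $g^2$ has type $(2,2,1)$ with $\chi(g^2)=0$, so the value is $0$.
\end{itemize}
The sum is $6+4-10+0=0$. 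Hence $(S^{(3,1,1)})^K=0$, which proves the lemma.

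The only real obstacle is getting the cycle-type census and the exterior-square character values right. Both can be checked independently against the standard character table of $S_5$.
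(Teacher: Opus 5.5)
Your proposal is correct and follows essentially the paper's proof: the same cycle-type census of \(K\) (\(1+4+5+10\)), the character-average formula, and \(S^{(3,1,1)}\cong\bigwedge^2S^{(4,1)}\) for the remaining partition. The only variation is for \((4,1)\) and \((3,2)\). There you deduce the vanishing from transitivity of \(K\) on \(\mathcal B\) and the decomposition \(\C^{\mathcal B}\cong S^{(5)}\oplus S^{(4,1)}\oplus S^{(3,2)}\), which is the same Burnside count the paper obtains from the permutation characters on points and two-element subsets.
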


\begin{proof}
The cycle types and character values on \(K\) are
\[
\begin{array}{c|rrrr}
\text{cycle type}&1^5&5&2^2 1&4 1\\ \hline
\text{number in }K&1&4&5&10\\ \hline
S^{(4,1)}&4&-1&0&0\\
S^{(3,2)}&5&0&1&-1\\
S^{(3,1,1)}&6&1&-2&0
\end{array}
\]
The character values follow from the permutation representations on points and two-element subsets, and from
\(S^{(3,1,1)}\cong\bigwedge^2S^{(4,1)}\)~\cite[Chapter~2]{Sagan2001}.
The character-average formula~\cite[Chapter~2]{Serre1977} gives
\[
\dim(S^\lambda)^K
=\frac1{20}\sum_{g\in K}\operatorname{Tr}(g|_{S^\lambda})
=0
\qquad(\lambda\in T_2).\qedhere
\]
\end{proof}

Recall the pointwise stabilizers
\[
K_C=\{g\in S_5:g(c)=c\ (c\in C)\}
\qquad(C\in\mathcal B).
\]

\begin{theorem}
\label{thm:johnson-minimum-size}
\label{prop:johnson-size-twenty-design}
The projections \(P^\pm\in A\) are quantum block \(\tau\)-designs of strength \(2\) and are not central in \(A\).
They satisfy
\[
\begin{gathered}
P^+P^-=0,\qquad P^++P^-=1_A,\\
\sum_{B\in\mathcal B}\operatorname{rank}(P_B^\pm)=10,
\qquad \esize(P^\pm)=20.
\end{gathered}
\]
They attain
\[
\min\{\esize(\chi):
      \chi\in A_+\setminus\{0\},\
      e_\lambda\chi=0\ (\lambda\in T_2)\}
=20
\]
and satisfy
\begin{equation}\label{eq:johnson-pointwise-average}
\frac16\sum_{g\in K_C}g\cdot P^\pm=\frac12 1_A
\qquad(C\in\mathcal B).
\end{equation}
For \(\chi\in A_+\setminus\{0\}\) central in \(A\),
\[
\chi\text{ is a }T_2\text{-design}
\quad\Longleftrightarrow\quad
\chi\in\R_{>0}1_A.
\]
Every such design has effective size \(40\).
\end{theorem}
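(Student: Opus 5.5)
The plan is to verify the elementary properties of \(P^\pm\) directly, deduce the design property from affine symmetry via Lemma~\ref{lem:agl-fixed-vectors}, prove the lower bound \(20\) by a component-trace inequality, and treat the central case with Proposition~\ref{prop:johnson-fiber-traces}.

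\emph{Elementary properties.} Each \(P_B^+\) is a rank-one projection in the two-dimensional space \(V_B\), so \(P_B^-\) is the complementary rank-one projection. Hence \(P^+P^-=0\) and \(P^++P^-=1_A\). The total rank is \(|\mathcal B|=10\), and Proposition~\ref{lem:distribution-normalization} gives \(\esize(P^\pm)=h_A(P^\pm)=2\cdot10=20\). Non-centrality holds because the center of \(A\) consists of tuples of scalar matrices, and a rank-one projection in \(\End(V_B)\) with \(\dim V_B=2\) is not scalar.

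\emph{Design property.} The key step is that \(P^+\) is fixed by the affine group \(K\). For \(g\in K\), the map \(g\) preserves midpoints, since \(g\bigl(\tfrac{a+b}2\bigr)=\tfrac{g(a)+g(b)}2\). Thus \(g(m_B)=m_{gB}\). The rule \((gz)_{g(x)}=z_x\) sends \(e_x\) to \(e_{g(x)}\), so \(gu_B=u_{gB}\) and
\[
g P_B^+ g^{-1}=P_{gB}^+ .
\]
This gives \(g\cdot P^+=P^+\), and then \(P^-=1_A-P^+\) is \(K\)-fixed as well.

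Each isotypic projection \(e_\lambda\) commutes with the \(S_5\)-action. Therefore \(e_\lambda P^\pm\) is a \(K\)-fixed vector in \(\operatorname{im}e_\lambda\), which is a direct sum of copies of \(S^\lambda\). By Lemma~\ref{lem:agl-fixed-vectors}, this forces \(e_\lambda P^\pm=0\) for \(\lambda\in T_2\). Theorem~\ref{thm:johnson-pointwise-characterization} then shows that \(P^\pm\) are quantum block \(\tau\)-designs of strength \(2\). Condition \eqref{eq:tau-quantum-block-design} becomes \eqref{eq:johnson-pointwise-average}, using \(|K_C|=3!=6\) and \(h_A(P^\pm)/h_A(1_A)=20/40=1/2\).

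\emph{Minimum effective size.} Let \(\chi\) be a nonzero positive element with \(e_\lambda\chi=0\) for \(\lambda\in T_2\). By Proposition~\ref{prop:johnson-fiber-traces} and the remark preceding Lemma~\ref{lem:agl-fixed-vectors}, all traces \(\operatorname{Tr}(\chi_B)\) equal a common value \(c\). Positivity and \(\chi\neq0\) give \(c>0\). Since each \(\chi_B\) is positive semidefinite, \(\operatorname{Tr}(\chi_B^2)\le(\operatorname{Tr}\chi_B)^2=c^2\). Consequently
\[
\esize(\chi)=\frac{(2\cdot10c)^2}{2\sum_B\operatorname{Tr}(\chi_B^2)}\ge\frac{400c^2}{20c^2}=20 ,
\]
and \(P^\pm\) attain this value.

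\emph{Central case.} A central \(\chi\) has the form \((w_BI_{V_B})_B\) with \(w_B\ge0\). If \(\chi\) is a \(T_2\)-design, Proposition~\ref{prop:johnson-fiber-traces} makes \((2w_B)_B\) a weighted \(2\)-\((5,2)\) design. Each \(2\)-subset lies only in the block equal to itself, so all \(w_B\) are equal and \(\chi\in\R_{>0}1_A\). Conversely, \(1_A\in\operatorname{im}E_0\) is annihilated by every \(e_\lambda\) with \(\lambda\neq(5)\). Its effective size is \(h_A(1_A)=40\).

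The step I expect to need the most care is the \(K\)-invariance of \(P^+\). It requires checking the convention \((gz)_{g(x)}=z_x\) against the identification \(A\cong A_{S_5,H,\tau}\), and confirming that midpoints are affine-equivariant in \(\mathbb F_5\), where \(2\) is invertible. Once that holds, the passage from \(K\)-fixed vectors to the vanishing of \(e_\lambda P^\pm\) is formal.
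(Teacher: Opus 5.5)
Your proposal is correct and follows the paper's proof essentially step by step. The lower bound comes from the equal component traces given by Proposition~\ref{prop:johnson-fiber-traces} together with $\operatorname{Tr}(\chi_B^2)\le(\operatorname{Tr}\chi_B)^2$. Attainment comes from the $K$-invariance of $P^\pm$ via midpoint preservation, combined with equivariance of $e_\lambda$ and Lemma~\ref{lem:agl-fixed-vectors}. The averaging identity follows from Theorem~\ref{thm:johnson-pointwise-characterization} with $|K_C|=6$ and $h_A(P^\pm)/h_A(1_A)=1/2$, and the central case reduces to equality of the scalar weights in a weighted $2$-$(5,2)$ design.
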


\begin{proof}
First we prove the lower bound.
For a \(T_2\)-design \(\chi\in A_+\setminus\{0\}\), Proposition~\ref{prop:johnson-fiber-traces} gives
\(\operatorname{Tr}(\chi_B)=s>0\) for all \(B\).
Nonnegative eigenvalues then give
\[
\operatorname{Tr}(\chi_B^2)\le s^2,\qquad
h_A(\chi)=20s,\qquad
h_A(\chi^2)\le20s^2,\qquad
\esize(\chi)\ge20.
\]
To attain the bound, note that affine maps preserve midpoints, so \(g u_B=u_{gB}\) for \(g\in K\).
Thus \(P^+\) is \(K\)-fixed, and equivariance of \(e_\lambda\), together with Lemma~\ref{lem:agl-fixed-vectors}, gives
\[
e_\lambda P^+=0\qquad(\lambda\in T_2).
\]
The same vanishing holds for \(P^-=1_A-P^+\).
Both projections have rank one in each \(M_2(\C)\) component, hence are not central in \(A\) and satisfy
\[
h_A(P^\pm)=2\cdot10=20=\esize(P^\pm).
\]
Theorem~\ref{thm:johnson-pointwise-characterization} gives strength \(2\) and~\eqref{eq:johnson-pointwise-average}, since \(|K_C|=6\) and \(h_A(1_A)=40\).
If \(\chi\) is central in \(A\), its components are scalar matrices; their equal traces force \(\chi=c1_A\) with \(c>0\).
Conversely, every such element satisfies the averaging condition and has effective size \(40\).
\end{proof}

\subsection{Conic bounds}

We determine the simple ideals for the quantum Schur product and reduce the conic program to three variables.

For \(w\in\End(A)\), define its component maps by
\[
w_{BC}:\End(V_C)\longrightarrow\End(V_B),\qquad
w(x)_B=\sum_Cw_{BC}(x_C).
\]
Equip \(\operatorname{Hom}(V_C,V_B)\) with \(\langle a,b\rangle=\operatorname{Tr}(a^*b)\).
Define the following linear correspondence on maps of the displayed form:
\[
\begin{aligned}
\operatorname{Hom}_{\C}(\End(V_C),\End(V_B))
&\longrightarrow\End_{\C}(\operatorname{Hom}(V_C,V_B)),\\
w_{BC}&\longmapsto[w_{BC}],\\
\bigl[2a(\cdot)b^*\bigr](c)&:=a\operatorname{Tr}(b^*c)
\qquad(a,b,c\in\operatorname{Hom}(V_C,V_B)).
\end{aligned}
\]

\begin{proposition}\label{prop:johnson-schur-decomposition}
For the quantum Schur product,
\[
(W,\bullet,\ast)
\cong\C^{\oplus7}\oplus M_2(\C)^{\oplus2}.
\]
The pair orbits \(|B\cap C|=2,1,0\) contribute \(\C^{\oplus3}\), \(M_2(\C)^{\oplus2}\), and \(\C^{\oplus4}\), respectively.
Moreover,
\[
\{u^\ast\mathbin\bullet u:u\in V_a\}\subseteq R_a^+
\qquad(a\in\mathcal C).
\]
\end{proposition}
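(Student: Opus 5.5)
The plan is to use the bracket correspondence \(w_{BC}\mapsto[w_{BC}]\) to turn the quantum Schur product into ordinary composition, blockwise in \((B,C)\). This is the analogue of the observation in the proof of Proposition~\ref{prop:weyl-hamming-qas} that Schur products of conjugation maps become products of the conjugating operators.

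\textbf{Step 1: the Schur product in component form.} On each summand, \(m^\dagger(\varepsilon_{uv})=\frac1{2\cdot2}\sum_k\varepsilon_{uk}\otimes\varepsilon_{kv}\), with the normalization constant \(d=2\) coming from \(h_A\). Substituting this, the Schur product of \(w=2a(\cdot)b^*\) and \(w'=2a'(\cdot)b'^*\) in the \((B,C)\) block is a scalar multiple of \(2aa'(\cdot)(bb')^{*}\)-type terms. Matching this against the bracket, I expect a formula of the form
\[
[(w\bullet w')_{BC}]=\kappa\,[w_{BC}]\circ[w'_{BC}].
\]
Here \(\kappa\) is some fixed constant (I expect \(\kappa=1\) or \(1/2\) with this normalization), and the components \((B,C)\) do not interact. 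I would also check that \(\ast\) corresponds to the adjoint of \([\cdot]\) on \(\End(\operatorname{Hom}(V_C,V_B))\). With these facts, \((W,\bullet,\ast)\) is isomorphic to the algebra of \(S_5\)-equivariant families \(([w_{BC}])_{B,C}\) under blockwise composition.

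\textbf{Step 2: reduction to stabilizer commutants.} By equivariance, such a family is determined by one representative pair \((B_0,C_0)\) in each orbit \(|B\cap C|=2,1,0\). The component \([w_{B_0C_0}]\) is then an arbitrary element of the commutant of the stabilizer \(L:=\operatorname{Stab}(B_0,C_0)\) acting on \(\operatorname{Hom}(V_{C_0},V_{B_0})\cong V_{B_0}\otimes V_{C_0}^*\). Hence
\[
(W,\bullet,\ast)\cong\bigoplus_{\text{orbits}}\End_{L}\bigl(V_{B_0}\otimes V_{C_0}^*\bigr),
\]
and its dimensions must total \(15=\dim W\). The three cases are as follows.
\begin{itemize}
\item \(|B\cap C|=2\): \(L=H\) and the module is \(\End(\operatorname{Std}_{S_3})=\mathbf1\oplus\operatorname{sgn}\oplus\operatorname{Std}\). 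It is multiplicity-free, giving \(\C^{\oplus3}\).
\item \(|B\cap C|=0\): say \(B=\{0,1\}\), \(C=\{2,3\}\). Then \(L\) is the stabilizer of \(\{B,C\}\) pointwise-as-sets, which is \(S_2\times S_2\) fixing \(4\). Here \(V_B\) and \(V_C\) are each the \(2\)-dimensional sum-zero space on three points. I would compute the character of \(V_B\otimes V_C^*\) restricted to \(L\) and check that it decomposes into four distinct one-dimensional characters, giving \(\C^{\oplus4}\).
\item \(|B\cap C|=1\): say \(B=\{0,1\}\), \(C=\{0,2\}\). Then \(L\cong S_2\), the transposition of \(3,4\). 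The \(4\)-dimensional module splits into two copies each of the trivial and sign characters, giving \(M_2(\C)^{\oplus2}\).
\end{itemize}
The dimension count \(3+4+8=15\) checks against Proposition~\ref{prop:johnson-module-decomposition}. This confirms \(\C^{\oplus7}\oplus M_2(\C)^{\oplus2}\), with the stated contributions of the three orbits.

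\textbf{Step 3: the cone inclusion.} For \(u\in V_a\) and \(\chi\in A_+\), I need \(\langle\chi,(u^\ast\bullet u)(\chi)\rangle\ge0\). The direct route is the definition: \(u^\ast\bullet u=m(u^\ast\otimes u)m^\dagger\). Write \(\chi=y^*y\), or spectrally decompose \(\chi\) within each component. Using traciality, as in Lemma~\ref{lem:weyl-hamming-cones} and Lemma~\ref{lem:matrix-difference-cones}, I would rewrite the quadratic form as a sum of terms \(h_A(zz^\ast)\). For example, when \(u=2a(\cdot)b^*\), the value \(\langle\chi,(u^\ast\bullet u)\chi\rangle\) should reduce to a positive multiple of sums of \(\operatorname{Tr}\bigl((\chi_B^{1/2}a\chi_C^{1/2})(\cdots)^*\bigr)\)-type expressions. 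For general \(u\), I would express the form as a Gram-type quantity \(\sum\|\chi^{1/2}\text{-sandwiched } u\text{-term}\|^2\), arising from the identity \(\langle\chi,m(u^\ast\otimes u)m^\dagger\chi\rangle\) together with \(m^\dagger(\chi)=(\chi^{1/2}\otimes1)m^\dagger(\chi^{1/2})\)-type manipulations. Self-adjointness of \(u^\ast\bullet u\) is automatic.

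I expect the main obstacle to be this last positivity argument for non-rank-one \(u\), together with fixing the constant \(\kappa\) in Step 1. A fallback for Step 3 is to use the block form from Step 1: expand \(u\) as a sum of rank-one terms and express the form as \(\operatorname{Tr}\) of a product of two positive semidefinite matrices, one built from \(\chi_B\otimes\overline{\chi_C}\) and one equal to \([u]^*[u]\). This is analogous to the Gram argument in Lemma~\ref{lem:matrix-difference-cones}.
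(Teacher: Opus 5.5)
Your proposal is correct and follows essentially the paper's proof. The bracket correspondence turns $\bullet$ into blockwise composition, and each pair orbit contributes the commutant of its stabilizer on $\operatorname{Hom}(V_C,V_B)$; your three stabilizer computations give the same decompositions as the paper. Your fallback in Step~3, which writes $[u_{BC}]^\ast[u_{BC}]$ as a sum of rank-one positives (equivalently, of positivity-preserving maps $2a(\cdot)a^\ast$), is the paper's positivity argument.

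The only slip is the normalization. With $h_A=2\operatorname{Tr}$ on each $\End(V_B)$ one has $m^\dagger(\varepsilon_{uv})=\frac12\sum_k\varepsilon_{uk}\otimes\varepsilon_{kv}$, so $\kappa=1$. Any $\kappa>0$ would leave both the isomorphism type and the cone inclusion unchanged.
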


\begin{proof}
The quantum Schur product acts separately on each \((B,C)\)-component.
Taking \(a,b\) to be matrix units gives bases on both sides of the correspondence.
For matrix units \(E_{pq}\) in an orthonormal basis of \(V_C\), the trace normalization gives
\[
m^\dagger(E_{pq})
=\frac12\sum_{r=1}^2E_{pr}\otimes E_{rq},
\]
so, for \(a,b,c,d\in\operatorname{Hom}(V_C,V_B)\), direct multiplication gives
\[
\begin{aligned}
\bigl(2a(\cdot)b^*\bigr)\bullet
\bigl(2c(\cdot)d^*\bigr)
&=\operatorname{Tr}(b^*c)\,2a(\cdot)d^*,\\
\bigl(2a(\cdot)b^*\bigr)^\ast
&=2b(\cdot)a^*.
\end{aligned}
\]
Thus \(w_{BC}\mapsto[w_{BC}]\) is a \(\ast\)-isomorphism onto \(\End(\operatorname{Hom}(V_C,V_B))\).
The group transports \(a\) to \(gag^{-1}\); consequently, the algebra on a pair orbit is the commutant of its stabilizer on \(\operatorname{Hom}(V_C,V_B)\).
Indeed, a commuting operator extends equivariantly to the orbit, independently of the chosen transporter.

For \(B=C\), the stabilizer representation on \(\End(V_B)\) is \(\mathbf1\oplus\operatorname{sgn}\oplus\operatorname{Std}\).
For \(|B\cap C|=1\), the stabilizer interchanges the two points outside \(B\cup C\); its eigenvalues \(1,-1\) each occur twice on \(\operatorname{Hom}(V_C,V_B)\).
For disjoint \(B,C\), the stabilizer \(S_2\times S_2\) has each of its four characters occurring once.
Schur's lemma gives the asserted algebras.

Positive rank-one operators correspond to the maps \(2a(\cdot)a^*\), which preserve positivity.
Hence every positive element \(w\) of \((W,\bullet,\ast)\) satisfies
\[
\langle\chi,w(\chi)\rangle
=h_A\bigl(\chi^{1/2}w(\chi)\chi^{1/2}\bigr)\ge0
\qquad(\chi\in A_+).
\]
\end{proof}

For the calculations, order \(X\) by \(0<1<2<3<4\) and use the following basis of \(V_B\), where \(X\setminus B=\{c<d<e\}\):
\[
\left(
\frac{e_c-e_d}{\sqrt2},\
\frac{e_c+e_d-2e_e}{\sqrt6}
\right).
\]
In each component define
\[
\begin{gathered}
\sigma_1:=\begin{pmatrix}0&1\\1&0\end{pmatrix},\qquad
\sigma_2:=\begin{pmatrix}0&-\mathrm i\\\mathrm i&0\end{pmatrix},
\qquad
\sigma_3:=\begin{pmatrix}1&0\\0&-1\end{pmatrix},\\
F_0:=I_2/\sqrt2,\qquad
F_j:=\sigma_j/\sqrt2\quad(1\le j\le3).
\end{gathered}
\]
The \(F_j\) form an orthonormal basis for the ordinary Hilbert--Schmidt inner product.
The operator trace on \(A\) is
\[
\operatorname{Tr}_A(w):=
\sum_{B\in\mathcal B}
\operatorname{Tr}\bigl(
w_{BB}:\End(V_B)\to\End(V_B)
\bigr).
\]

\begin{lemma}\label{lem:johnson-distribution-matrices}
The feasible functionals of the design program~\eqref{def:design-conic-bound} are exactly
\begin{equation}\label{eq:johnson-functional-parameters}
\mathsf A(w)=\operatorname{Tr}_A\!\left(
\left(
xE_0+\frac y5e_{(2,2,1)}+\frac z4e_{(2,1,1,1)}
\right)w\right),
\qquad \mathsf B=Q'\mathsf A,
\end{equation}
where
\begin{equation}\label{eq:johnson-feasible-parameters}
x,y,z\ge0,\qquad x+y+z=1,\qquad
y\le\frac12,\qquad z\le x,\qquad y+z\le2x.
\end{equation}
The objective is \(40x\).
Replacing only \(\mathsf A_a(v)\ge0\) for \(v\in R_a^+\) by \(\mathsf A(z_a)\ge0\), where \(z_a\) are the minimal central projections of \((W,\bullet,\ast)\), replaces the last inequality by \(y\le2x\).
\end{lemma}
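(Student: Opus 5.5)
The plan is to use the composition-ideal structure of Proposition~\ref{prop:johnson-module-decomposition} to cut the design program down to three scalars. After that, the cones \(R_a^+\) are treated ideal by ideal for the Schur product, using Proposition~\ref{prop:johnson-schur-decomposition}.

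\emph{Reduction to three parameters.} Let \((\mathsf A,\mathsf B)\) be feasible with \(\mathsf B=Q'\mathsf A\). Then \(\mathsf B_i(u)=N\mathsf A(u)\) for \(u\in U_i\). So the design constraints \(\mathsf B_\lambda=0\) for \(\lambda\in T_2=\{(4,1),(3,2),(3,1,1)\}\) say that \(\mathsf A\) vanishes on the three \(M_2(\C)\) ideals \(e_\lambda\circ W\). By \eqref{eq:johnson-module-decomposition}, the remaining ideals \(U_{(5)}=\C E_0\), \(U_{(2,2,1)}=\C e_{(2,2,1)}\) and \(U_{(2,1,1,1)}=\C e_{(2,1,1,1)}\) are one-dimensional. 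Hence \(\mathsf A\) is determined by
\[
x:=\mathsf A(E_0),\qquad y:=\mathsf A(e_{(2,2,1)}),\qquad z:=\mathsf A(e_{(2,1,1,1)}).
\]
The isotypic projections satisfy \(\operatorname{Tr}_A(E_0)=1\), \(\operatorname{Tr}_A(e_{(2,2,1)})=\dim S^{(2,2,1)}=5\) and \(\operatorname{Tr}_A(e_{(2,1,1,1)})=4\). The \(e_\lambda\) are also orthogonal central idempotents for composition. Together these give the trace representation \eqref{eq:johnson-functional-parameters}. Positivity of \(\mathsf B\) on \(K_i^+=\R_{\ge0}e_i\) gives \(x,y,z\ge0\), and the Hermitian conditions hold automatically because the parameters are real. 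The normalization \(\mathsf A(I)=\sum_i\mathsf A(e_i)\) gives \(x+y+z=1\). The objective is \(\mathsf B_0(E_0)=N\mathsf A(E_0)=40x\).

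\emph{The constraints \(\mathsf A_a\ge0\) on \(R_a^+\).} For each of the nine Schur ideals \(V_a\) of Proposition~\ref{prop:johnson-schur-decomposition}, I would compute \(\mathsf A_a\) explicitly as a Hermitian \(1\times1\) or \(2\times2\) matrix \(X_a(x,y,z)\) relative to Schur matrix units. To do this I would write \(E_0\), \(e_{(2,2,1)}\) and \(e_{(2,1,1,1)}\) in the Pauli-type bases \(F_j\) on each orbit component \((B,C)\), using the correspondence \(w_{BC}\mapsto[w_{BC}]\), and take the trace pairing.

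For necessity, Proposition~\ref{prop:johnson-schur-decomposition} gives \(u^\ast\bullet u\in R_a^+\). So each \(X_a\) must be positive semidefinite. I expect the \(1\times1\) blocks to give \(y\le\tfrac12\) and \(z\le x\) (or consequences of these and \(x,y,z\ge0\)). I expect the determinant of a \(2\times2\) block on the \(|B\cap C|=1\) orbit to give \(y+z\le2x\).

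I also expect the diagonal entries of that block to sum, through \(\mathsf A(z_a)\), to a condition equivalent to \(y\le2x\). This would give the last sentence of the lemma. For it, I would check that replacing only the \(R_a^+\) constraints by \(\mathsf A(z_a)\ge0\) keeps every other inequality and drops exactly the determinant condition.

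\emph{Sufficiency, the main obstacle.} Positive semidefiniteness of \(X_a\) only shows nonnegativity on the cone generated by the \(u^\ast\bullet u\), and \(R_a^+\) may be larger. Moreover the vertices with \(x=\tfrac13\) cannot be distributions of actual designs, since Theorem~\ref{thm:johnson-minimum-size} forces \(x\ge\tfrac12\) for those. So I would argue ideal by ideal, not globally.

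The parameter region is a polytope with vertices
\[
(1,0,0),\quad(\tfrac12,\tfrac12,0),\quad(\tfrac12,0,\tfrac12),\quad(\tfrac13,\tfrac12,\tfrac16),\quad(\tfrac13,\tfrac13,\tfrac13).
\]
For each vertex and each \(a\), I would show that \(\mathsf A_a\) is a nonnegative combination of restrictions \(A^\chi_a\) for suitable \(\chi\in A_+\). These restrictions are nonnegative on \(R_a^+\) by Proposition~\ref{prop:distribution-positivity}, and here \(\chi\) need not be a design. Natural candidates are \(1_A\), the projections \(P^\pm\), and rank-one elements supported on one or two blocks, averaged over \(S_5\), which does not change \(A^\chi\). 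Convexity then covers the whole region.

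Finding such \(\chi\) for the \(2\times2\) blocks at the \(x=\tfrac13\) vertices is the step I expect to be hardest. If it fails, I would instead try to prove directly that \(R_a^+\) coincides with the positive semidefinite cone on those ideals, via a Gram-vector argument as in Lemma~\ref{lem:matrix-difference-cones}.
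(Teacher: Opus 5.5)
Your outline follows the paper's route. You reduce to $(x,y,z)$ through the three surviving one-dimensional composition ideals, express each $\mathsf A_a$ as a Hermitian matrix on the Schur ideals, and get necessity from $u^\ast\bullet u\in R_a^+$. You then get sufficiency by realizing $\mathsf A_a$ through restrictions $A_a^\chi$, ideal by ideal. Your remark that the $x=\tfrac13$ vertices cannot come from a single design is exactly why this has to be done per ideal.

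As written, however, the proposal does not prove the lemma. The two load-bearing steps are missing, and one prediction is wrong.

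First, every inequality in \eqref{eq:johnson-feasible-parameters} is only predicted. You give no way to compute $e_{(2,2,1)}$ and $e_{(2,1,1,1)}$ componentwise. The paper gets $e_{(2,1,1,1)}$ from $\bigwedge^3(I_5-J_5/5)$ acting on the $F_2$-coordinates. It gets $e_{(2,2,1)}$ from the six-element orbit $H_r=g\cdot P^+-\tfrac12 1_A$. These elements are real symmetric and componentwise traceless, and they are annihilated by $e_\lambda$ for $\lambda\in T_2$ (Theorem~\ref{thm:johnson-minimum-size}). Hence they span the unique copy of $S^{(2,2,1)}$, and Schur's lemma gives $e_{(2,2,1)}(\chi)=\frac16\sum_rH_r\sum_B\operatorname{Tr}(H_{r,B}^\ast\chi_B)$. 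Only then does the eigenvalue table \eqref{eq:johnson-density-eigenvalues} follow.

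The wrong prediction concerns the trace conditions. The diagonal entries of the two $|B\cap C|=1$ blocks are $(4x-y)/40$ and $(4x+y)/40$. So $\mathsf A(z_a)\ge0$ there gives $y\le4x$, not $y\le2x$. The inequality $y\le2x$ is instead the one-dimensional condition from the disjoint orbit. It survives in the weaker program and implies both trace conditions, which is why only $y+z\le2x$ is lost.

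Second, for sufficiency you do not find the elements $\chi$, and you expect trouble at the $x=\tfrac13$ vertices. The missing observation is that both matrices in \eqref{eq:johnson-two-matrices} have the form $\begin{pmatrix}\alpha&\beta\\\beta&\alpha\end{pmatrix}$. So for every $(x,y,z)$ they are diagonalized by the same vectors $q=(E_{11}\pm E_{22})/\sqrt2$ and $(E_{12}\pm E_{21})/\sqrt2$. Each functional $w\mapsto\langle q,[w_{BC}]q\rangle$ equals $\tfrac1{16}\langle\chi,w(\chi)\rangle$ on the relevant ideal, for $\chi$ of rank one on the two blocks $B,C$ only. Take $\chi_B=uu^\ast$ and $\chi_C=vv^\ast$, with $u=(1,1)^{\mathsf T}$ and $v=(1,\pm1)^{\mathsf T}$, the sign matched to $q$. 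Nonnegative eigenvalues then give $\mathsf A_a\ge0$ on $R_a^+$ for all parameters at once, with no vertex enumeration.

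Your fallback would not work: on these $M_2(\C)$ ideals, $R_a^+$ is strictly larger than the positive semidefinite cone. Take the $(E_{11},E_{22})$ ideal. For any $\chi\in A_+$, the $(B,C)$ and $(C,B)$ contributions evaluate the quadratic form of $[w_{BC}]$ at a vector $\xi$ and at $\bar\xi$. So $\langle\chi,w(\chi)\rangle=0$ for every $\chi$ whenever $[w_{BC}]$ is purely imaginary in these real bases. Then $\pm w\in R_a^+$, although such $w\ne0$ is not positive for $\bullet$.
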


\begin{proof}
The design conditions \(\mathsf B_i=0\) for \(i\in T_2\) leave the multiplicity-one modules \(S^{(5)},S^{(2,2,1)},S^{(2,1,1,1)}\), of dimensions \(1,5,4\).
The MacWilliams relation gives \(\mathsf B_i=40\mathsf A|_{U_i}\), so positivity on \(K_i^+\) and normalization give~\eqref{eq:johnson-functional-parameters} with \(x,y,z\ge0\) and \(x+y+z=1\).
Since \(J=40E_0\), the objective is \(40x\).
We compute the three projection maps to determine the remaining constraints.

The map \(E_0\) has coefficient \(1/10\) between any two \(F_0\) coordinates and is zero elsewhere.
The \(F_2\) coordinates carry \(\operatorname{Ind}_{S_2\times S_3}^{S_5}(\mathbf1\boxtimes\operatorname{sgn})\).
Identify them with the exterior basis indexed by the increasing triples \(X\setminus B\).
The projection \(\bigwedge^3(I_5-J_5/5)\) has image \(\bigwedge^3S^{(4,1)}\cong S^{(2,1,1,1)}\)~\cite{Sagan2001}, so
\begin{equation}\label{eq:johnson-exterior-projection}
\begin{aligned}
(e_{(2,1,1,1)})_{BC}(F_2)
&=\det(I_5-J_5/5)_{X\setminus B,\,X\setminus C}\,F_2,\\
(e_{(2,1,1,1)})_{BC}(F_j)
&=0\qquad(j\ne2),
\end{aligned}
\end{equation}
where \(J_5\) is the all-ones matrix.
For \(B=\{0,1\}\) and \(C=\{0,1\},\{0,2\},\{2,3\}\), these determinants are \(2/5,-1/5,0\).

For \(e_{(2,2,1)}\), use the orbit of \(P^+\).
Extend \(P_B^+\) by zero on \(V_B^\perp\subseteq\C^{X\setminus B}\).
Its unique diagonal entry \(2/3\) occurs at \(m_B\), so any permutation stabilizing \(P^+\) preserves all midpoints.
Hence
\[
g(a+b)+g(0)
=2g((a+b)/2)
=g(a)+g(b),
\]
so \(x\mapsto g(x)-g(0)\) is additive on \(\mathbb F_5\).
Thus \(g(x)=g(0)+(g(1)-g(0))x\), and \(g\) is affine.
The stabilizer is therefore \(K\), and the orbit has six elements.
Define
\[
\{H_1,\ldots,H_6\}
:=\{g\cdot P^+-\tfrac12 1_A:g\in S_5\}.
\]
These elements are real symmetric and componentwise traceless, so they lie in \(\operatorname{Ind}_{S_2\times S_3}^{S_5}(\mathbf1\boxtimes\operatorname{Std})\).
They satisfy \(e_\lambda H_r=0\) for \(\lambda\in T_2\), by Theorem~\ref{thm:johnson-minimum-size}; their nonzero invariant span is therefore the single copy of \(S^{(2,2,1)}\).
For the ordinary Hilbert--Schmidt inner product, the equivariant sum of their rank-one operators is scalar on this five-dimensional module by Schur's lemma.
Since \(\sum_B\operatorname{Tr}(H_{r,B}^*H_{r,B})=5\), its trace is \(6\cdot5\), giving
\begin{equation}\label{eq:johnson-five-projection}
e_{(2,2,1)}(\chi)
=\frac16\sum_{r=1}^6H_r
\sum_{B\in\mathcal B}\operatorname{Tr}(H_{r,B}^*\chi_B)
\qquad(\chi\in A).
\end{equation}
The factor \(2\) in \(h_A\) does not change orthogonal projections.

The following coordinates make this calculation explicit:
\[
2H_{r,B}
=n_{r,B}^{\mathsf T}\binom{\sigma_1}{\sigma_3},
\qquad n_{r,B}\in\R^2.
\]
For complement \(\{c<d<e\}\), midpoint choices \(c,d,e\) give
\[
n_{r,B}
=\binom{\sqrt3/2}{1/2},\quad
\binom{-\sqrt3/2}{1/2},\quad
\binom0{-1},
\]
respectively.
The six permutations fixing \(0,1\) individually give distinct orbit elements, with midpoint choices
\[
\begin{array}{c|ccc}
r&\{0,1\}&\{0,2\}&\{2,3\}\\ \hline
1&2&3&1\\
2&2&4&0\\
3&3&1&0\\
4&3&4&1\\
5&4&1&4\\
6&4&3&4
\end{array}
\]
because only the identity in \(K\) fixes both points.
For the three pair representatives, respectively,
\[
\sum_rn_{r,B}n_{r,C}^{\mathsf T}
=3I_2,\quad
-\tfrac32I_2,\quad
\begin{pmatrix}0&0\\0&3\end{pmatrix}.
\]
Equation~\eqref{eq:johnson-five-projection} divides these by \(12\): by \(6\) for the orbit sum and by \(2\) for the normalized basis \((F_1,F_3)\).

To express \(\mathsf A\) on the ideals for \(\bullet\), define coefficients \(d_{\mu\nu}\) and operators \(D_{BC}\in\End(\operatorname{Hom}(V_C,V_B))\) by
\[
\begin{aligned}
\left(
xE_0+\frac y5e_{(2,2,1)}+\frac z4e_{(2,1,1,1)}
\right)_{CB}(F_\nu)
&=\sum_{\mu=0}^3 d_{\mu\nu}F_\mu,\\
D_{BC}(a)
&:=2\sum_{\mu,\nu=0}^3d_{\mu\nu}F_\nu aF_\mu.
\end{aligned}
\]
Here \(0\le\mu,\nu\le3\), \(F_\nu\in\End(V_B)\), and \(F_\mu\in\End(V_C)\).
Tracing the composition of this \((C,B)\)-component with \(2a(\cdot)b^*\) gives \(\operatorname{Tr}(b^*D_{BC}(a))\), hence
\begin{equation}\label{eq:johnson-component-pairing}
\mathsf A(w)
=\sum_{B,C}\operatorname{Tr}\bigl(D_{BC}[w_{BC}]\bigr).
\end{equation}
Here the trace is on \(\operatorname{Hom}(V_C,V_B)\).
At the three representatives above,
\begin{equation}\label{eq:johnson-component-operators}
\begin{aligned}
D_{BB}(a)
&=\frac{x}{10}a
 +\frac y{20}(\sigma_1a\sigma_1+\sigma_3a\sigma_3)
 +\frac z{10}\sigma_2a\sigma_2,\\
D_{BC}(a)
&=\frac{x}{10}a
 -\frac y{40}(\sigma_1a\sigma_1+\sigma_3a\sigma_3)
 -\frac z{20}\sigma_2a\sigma_2
&&(|B\cap C|=1),\\
D_{BC}(a)
&=\frac{x}{10}a+\frac y{20}\sigma_3a\sigma_3
&&(B\cap C=\varnothing).
\end{aligned}
\end{equation}
For \(B=\{0,1\}\) and \(C=\{0,2\}\), choose a real orthonormal eigenbasis of \(V_B\) for the nonidentity element of the pair stabilizer, with eigenvalues \(1,-1\), and transport it to \(V_C\) by \((1\,2)\).
This transposition matches the initial bases, so the same real orthogonal change applies in both components.
It preserves \(\operatorname{span}_{\R}\{\sigma_1,\sigma_3\}\) orthogonally and sends \(\sigma_2\) to \(\pm\sigma_2\), leaving the middle expression in~\eqref{eq:johnson-component-operators} unchanged.
In these bases, identify \(\operatorname{Hom}(V_C,V_B)\) with \(M_2(\C)\), and let \(E_{ij}\) denote its matrix units.
The nonidentity element of the pair stabilizer has eigenspaces with ordered bases \((E_{11},E_{22})\) and \((E_{12},E_{21})\).
In these bases, the restrictions of \(D_{BC}\) have matrices
\begin{equation}\label{eq:johnson-two-matrices}
\frac1{40}
\begin{pmatrix}
4x-y&-y-2z\\
-y-2z&4x-y
\end{pmatrix},
\qquad
\frac1{40}
\begin{pmatrix}
4x+y&-y+2z\\
-y+2z&4x+y
\end{pmatrix}.
\end{equation}
Diagonalizing gives
\begin{equation}\label{eq:johnson-density-eigenvalues}
\begin{array}{c|c|c}
|B\cap C|&\text{eigenvalues}&\text{multiplicities}\\ \hline
2&
\dfrac1{10},\
\dfrac{x-y+z}{10},\
\dfrac{x-z}{10}
&1,1,2\\[4pt]
1&
\dfrac{2x-y-z}{20},\
\dfrac{2x+z}{20},\
\dfrac{2x+y-z}{20}
&1,2,1\\[4pt]
0&
\dfrac{2x+y}{20},\
\dfrac{2x-y}{20}
&2,2
\end{array}
\end{equation}

For the functionals in~\eqref{eq:johnson-functional-parameters}, we now verify that these eigenvalue tests are equivalent to positivity on \(R_a^+\).
For \((V_a,\bullet,\ast)\cong\C\), Proposition~\ref{prop:johnson-schur-decomposition} gives \(z_a\in R_a^+\).
The nonzero positive map \(z_a\) cannot annihilate \(1_A\), so
\(\langle1_A,z_a(1_A)\rangle>0\) and
\[
R_a^+=\R_{\ge0}z_a.
\]
For either simple ideal of \((W,\bullet,\ast)\) isomorphic to \(M_2(\C)\), the unit eigenvectors of \(D_{BC}\) in~\eqref{eq:johnson-two-matrices} are
\[
q=\frac{E_{11}\pm E_{22}}{\sqrt2}
\quad\text{or}\quad
q=\frac{E_{12}\pm E_{21}}{\sqrt2}.
\]
Take \(u=(1,1)^{\mathsf T}\in V_B\) and \(v=(1,\pm1)^{\mathsf T}\in V_C\) in the chosen bases, with the sign such that the projection of \(uv^*\) onto the relevant eigenspace is \(\pm\sqrt2q\), and set
\[
\chi_B:=uu^*,\qquad
\chi_C:=vv^*,\qquad
\chi_D:=0\quad(D\notin\{B,C\}).
\]
Equivariance under the transposition exchanging \(B,C\) identifies \([w_{BC}]\) with \([w_{CB}]\) in these bases.
The two ordered-pair contributions, with the trace and correspondence normalizations, give
\[
0\le\langle\chi,w(\chi)\rangle
=16\langle q,[w_{BC}]q\rangle
\qquad(w\in R_a^+).
\]
Thus each functional \(w\mapsto\langle q,[w_{BC}]q\rangle\) is nonnegative on \(R_a^+\).
Equivariance and~\eqref{eq:johnson-component-pairing} then show that nonnegative eigenvalues suffice for \(\mathsf A_a\ge0\) on \(R_a^+\).
Conversely, the rank-one projection onto each displayed \(q\) commutes with the stabilizer and extends equivariantly to an element positive for \(\bullet\), hence in \(R_a^+\).
Testing it forces the corresponding eigenvalue to be nonnegative.
This proves the equivalence; the Hermitian operators in~\eqref{eq:johnson-functional-parameters} and~\eqref{eq:johnson-component-operators} give the required involution identities.

Using \(x+y+z=1\), nonnegativity in~\eqref{eq:johnson-density-eigenvalues} is equivalent to~\eqref{eq:johnson-feasible-parameters}.
Testing only the minimal central projections of \((W,\bullet,\ast)\) replaces positive semidefiniteness of the two matrices in~\eqref{eq:johnson-two-matrices} by their trace conditions \(4x-y\ge0\) and \(4x+y\ge0\).
The retained inequality \(2x-y\ge0\) from \(B\cap C=\varnothing\) implies both, leaving exactly the weaker inequality \(y\le2x\).
\end{proof}

\begin{theorem}\label{thm:johnson-conic-values}
The design program satisfies
\[
\DelsDes(W,T_2)=\frac{40}{3}.
\]
Replacing only the inequalities \(\mathsf A_a(v)\ge0\) for \(v\in R_a^+\) by \(\mathsf A(z_a)\ge0\), where \(z_a\) are the minimal central projections of \((W,\bullet,\ast)\), gives \(10\).
\end{theorem}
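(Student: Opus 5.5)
The plan is to reduce everything to Lemma~\ref{lem:johnson-distribution-matrices}, which already parametrizes all feasible pairs of the design program by \((x,y,z)\) subject to~\eqref{eq:johnson-feasible-parameters}, with objective \(40x\). This turns \(\DelsDes(W,T_2)\) into a three-variable linear program: minimize \(40x\) subject to \(x,y,z\ge0\), \(x+y+z=1\), \(y\le\tfrac12\), \(z\le x\), and \(y+z\le2x\). For the weakened program, the same lemma says that only the last inequality is replaced by \(y\le2x\). So both values come from solving two small linear programs exactly.

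For the full program, I would first derive the lower bound. Substituting \(y+z=1-x\) into \(y+z\le2x\) gives \(1-x\le2x\), hence \(x\ge1/3\) and \(40x\ge40/3\). For attainment, I would take \(x=1/3\), so \(y+z=2/3\). Choosing \(z=1/3\) and \(y=1/3\) gives \(y\le\tfrac12\), \(z\le x\), and \(y+z=2x\), so all constraints hold. This point is feasible with objective \(40/3\). By the exactness statement of Lemma~\ref{lem:johnson-distribution-matrices}, this yields an actual feasible pair \((\mathsf A,\mathsf B)\), and hence \(\DelsDes(W,T_2)=40/3\).

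For the relaxed program, the constraints become \(x,y,z\ge0\), \(x+y+z=1\), \(y\le\tfrac12\), \(z\le x\), and \(y\le2x\). I would then combine the two inequalities involving \(x\). Since \(z\le x\) and \(y\le\tfrac12\), we get \(1=x+y+z\le 2x+\tfrac12\), so \(x\ge1/4\) and the objective is at least \(10\). For attainment, I would take \(x=z=1/4\) and \(y=1/2\). Then \(y\le2x\) holds with equality, \(z\le x\) holds, and the sum is \(1\). The relaxed optimum is therefore \(10\). I should also check that the two bounds are genuinely different by verifying that \((1/4,1/2,1/4)\) violates the full constraint: indeed \(y+z=3/4>1/2=2x\), consistent with \(10<40/3\).

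The main obstacle is not this final arithmetic but the correctness of the lemma's reduction. That reduction relies on the exact eigenvalue table~\eqref{eq:johnson-density-eigenvalues}, and in particular on the fact that positivity on the \(M_2(\C)\) ideals is captured by the \(2\times2\) matrices~\eqref{eq:johnson-two-matrices}. Since the lemma is stated earlier, I would invoke it directly. I would only double-check that, after using \(x+y+z=1\), the eigenvalue conditions listed there reduce exactly to the displayed inequalities, and in particular that \(2x-y-z\ge0\) is the binding one.
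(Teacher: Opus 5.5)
Your proposal is correct and follows the paper's proof: both reduce to Lemma~\ref{lem:johnson-distribution-matrices}, obtain \(x\ge1/3\) from \(y+z\le2x\) and \(x\ge1/4\) from \(z\le x\) together with \(y\le\frac12\), and attain the relaxed value at \((1/4,1/2,1/4)\). The only difference is your optimal point \((1/3,1/3,1/3)\) for the full program in place of the paper's \((1/3,1/2,1/6)\); both satisfy every constraint in~\eqref{eq:johnson-feasible-parameters}, so the difference is immaterial.
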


\begin{proof}
For the full program, Lemma~\ref{lem:johnson-distribution-matrices} gives
\[
1=x+y+z\le3x,\qquad
40x\ge\frac{40}{3},
\]
with equality at the feasible point \((x,y,z)=(1/3,1/2,1/6)\).
For the weaker program,
\[
1=x+y+z\le2x+\frac12,\qquad
40x\ge10,
\]
with equality at the feasible point \((x,y,z)=(1/4,1/2,1/4)\).
\end{proof}

Thus
\[
10<\frac{40}{3}<20.
\]
Positive semidefiniteness of the two matrices in~\eqref{eq:johnson-two-matrices} gives the first strict inequality; the second shows that the conic constraints do not capture the minimum effective size of positive elements satisfying the design conditions.
\section*{Acknowledgements}

The author is grateful to Professors Hajime Tanaka and Takayuki Okuda for their valuable comments and helpful discussions.
This work was supported by JSPS KAKENHI Grant Number JP26KJ0570.

\providecommand{\bysame}{\leavevmode\hbox to3em{\hrulefill}\thinspace}
\providecommand{\MR}{\relax\ifhmode\unskip\space\fi MR }
\providecommand{\MRhref}[2]{%
  \href{http://www.ams.org/mathscinet-getitem?mr=#1}{#2}
}
\providecommand{\href}[2]{#2}

\end{document}